\DeclareSymbolFont{SY}{U}{psy}{m}{n}
\DeclareMathSymbol{\emptyset}{\mathord}{SY}{'306}
\theoremstyle{plain}
\newtheorem*{lemA}{Lemma A}
\newtheorem{thm}{Theorem}[section]
\newtheorem*{thmAFV}{Theorem AFV}
\newtheorem*{thmnonumber}{Theorem}
\newtheorem{cor}[thm]{Corollary}
\newtheorem{lem}[thm]{Lemma}
\newtheorem{prop}[thm]{Proposition}
\theoremstyle{definition}
\newtheorem{defn}[thm]{Definition}
\newtheorem{rem}[thm]{Remark}
\newtheorem{ex}[thm]{Example}
\numberwithin{equation}{section}
\def\C{{\mathbb C}}
\def\R{\mathbb{R}}
\def\MM{M}
\def\Re{\operatorname{Re}}
\def\Hol{\operatorname{Hol}}
\def\beq{\begin{eqnarray}}
\def\eeq{\end{eqnarray}}
\def\beqa{\begin{eqnarray*}}
\def\eeqa{\end{eqnarray*}}
\def\wt{\widetilde}
\def\Om{\Omega}
\def\ovr{\overline}
\def\dist{\operatorname{dist}}
\def\Span{\operatorname{Span}}
\def\Ran{\operatorname{Ran}}
\def\clos{\operatorname{clos}}
\def\sm{\setminus}
\def\pt{\partial}
\def\eps{\epsilon}
\def\de{\delta}
\def\ga{\gamma}
\def\beqn{\begin{equation}}
\def\eeqn{\end{equation}}
\def\Mult{\operatorname{Mult}(\wt\cH)}
\def\BC{{\mathbb C}}
\def\cH{{\mathcal H}}
\def\cN{{\mathcal N}}
\def\mg#1{}
\def\la{\lambda}
\def\si{\sigma}
\def\eps{\varepsilon}
\def\Lip{\operatorname{Lip}}
\def\Ran{\operatorname{Ran}}
\def\ind{\operatorname{ind}}
\renewcommand{\epsilon}{\varepsilon}
\renewcommand{\phi}{\varphi}
\begin{document}
%%%%%%%%%%%%%%%%%%%%%%%%%%%%%%%%%%%%%%%%%%%%%%%%%%%%%%%%%%%%%%%%%%%%%%%%
\title{Infinite-dimensional features of matrices and pseudospectra}
\author{Avijit Pal, Dmitry V. Yakubovich}
\vskip-1cm
\address[A. Pal]{Department of Mathematics and Statistics, Indian Institute of Science, Education and
Research Kolkata, Mohanpur - 741 246, India
 }
\email{avijitmath@gmail.com}
\address[D. V. Yakubovich]{
Departamento de Matem\'aticas, Universidad Aut\'onoma de Madrid, Cantoblanco, 28049 Madrid, Spain
\newline and Instituto de Ciencias Matem\'aticas (CSIC-UAM-UC3M-UCM), Madrid, Spain.}
\email {dmitry.yakubovich@uam.es}

\subjclass[2010]{47B35 (primary)}

\keywords{Pseudospectra, Quasitriangular,
Cowen-Douglas class, nonnormal matrices,
approximation of spectra, finite section method.}
\begin{abstract}
Given a Hilbert space operator $T$, the level sets of function
$\Psi_T(z)=\|(T-z)^{-1}\|^{-1}$ determine the so-called
pseudospectra of $T$. We set $\Psi_T$ to be zero on the spectrum
of $T$. After giving some elementary properties of $\Psi_T$
(which, as it seems, were not noticed before), we apply them to
the study of the approximation. We prove that for any operator
$T$, there is a sequence $\{T_n\}$ of finite matrices such that
$\Psi_{T_n}(z)$ tends to $\Psi_{T}(z)$ uniformly on $\C$. In this
proof, quasitriangular operators play a special role. This is
merely an existence result, we do not give a  concrete construction
of this sequence of matrices.

One of our main points is to show how to use infinite-dimensional
operator models in order to produce examples and counterexamples
in the set of finite matrices of large
size. In particular, we
get a result, which means, in a sense, that
the pseudospectrum of a nilpotent matrix
can be anything one can imagine. We also study the norms of the
multipliers in the context of Cowen--Douglas class operators. We use these results to show that, to the
opposite to the function $\Psi_{S}$,
the function $\|\sqrt{S-z}\,\|$
for certain finite matrices $S$ may oscillate
arbitrarily fast
even far away from the spectrum.
\end{abstract}
\maketitle
\vskip-.5cm

%%%%%%%%%%%%%%%%%%%%%%%%%%%%%%%%%%%%%%%%%%%%%%%%%%%%%%%%%%%%%%%%%%%%%%%%%%%
\section{Introduction}
Let $\mathcal H$ be a complex separable Hilbert space and
$\mathcal B(\mathcal H)$ be the algebra of bounded operators on
$\mathcal H$, equipped with the supremum norm. Given an operator
$T\in \mathcal B(\mathcal H)$, put
\[
\Psi_{T}(z) = \left\{ \begin{array}{ll}
        0  & \mbox{if $z \in \sigma(T) $};\\
        \|(T-z)^{-1}\|^{-1} & \mbox{if $z \not \in \sigma(T)$}.\end{array} \right.
\]
This function is closely related with so-called
$\epsilon$-pseudospectra of  $T$, defined by
\[
\sigma_{\epsilon}(T)=\{z \in \mathbb C:\Psi_{T}(z)< \epsilon\}
\]
(here $\eps>0$).
It is well known that
\[
\sigma_{\epsilon}(T)=\bigcup_{\|A\|<\epsilon}\sigma(T+A),
\]
see, for instance \cite{{chartin}, {Gall}, {sharg-defn}}.
While the $\epsilon$-pseudospectrum of a normal
operator in a Hilbert space coincides with the
$\epsilon$-neighbourhood of the spectrum, the situation is more
involved for non-normal operators.
It is well-known that the spectral properties of a nonnormal operator (or matrix)
not only depend on its spectrum, but are also influenced by the resolvent growth.
The pseudospectra are a good language to describe this growth,
and their importance
has been widely recognized in the recent years.
Their applications include the finite section method for Toeplitz matrices,
growth bounds for semigroups, numerics for differential operators, matrix iterations,
linear models for turbulence, etc.
We refer to the book \cite{Trefeth-book} by Trefethen and Embree and to
the Trefethen's review \cite{Trefeth-SIAM} for comprehensive accounts.
Much effort has been devoted to the calculation of pseudospectra of matrices
\cite{Trefeth-comp-ps}.

By a \textit{filtration} on $\mathcal H$, we mean a sequence
$\{P_n\}$ of finite rank orthogonal projections such that $\Ran
P_n \subseteq \Ran P_{n+1}$ and $\cup_n \Ran P_n$ is dense in
$\mathcal H$. The corresponding sequence of finite dimensional
operators $T_n = P_n T_{\mid \Ran P_n}$ will be referred to as
\textit{finite sections of~$T$}.

Recall that an operator
$T \in \mathcal B(\mathcal H)$ is said to be
\textit{quasitriangular} if there is
a filtration $\{P_n\}$  such that
$\lim_{n \rightarrow
\infty}\|(I-P_n)TP_n\|=0$.
If there is a
filtration $\{P_n\}$ such that both
$\lim_{n \rightarrow
\infty}\|(I-P_n)TP_n\|=0$ and
$\lim_{n \rightarrow
\infty}\|P_nT(I-P_n)\|=0$, then $T$ is said to be quasidiagonal.
In these cases, we will refer to
$\{P_n\}$ as to a \textit{filtration, corresponding to a quasitriangular (quasidiagonal)
operator $T$}.

It is well known that spectra do not
necessarily behave well under limiting procedures, even for
a sequence of bounded operators on some Hilbert space
$\mathcal H$ converging in operator norm.
For example, consider the bilateral weighted shift on
$\ell_2(\mathbb Z)$, defined by $T(s) e_j=e_{j+1}$ for $j\ne0$ and
$T(s) e_0=s e_1$ (here $\{e_j: j\in\mathbb{Z}\}$ is the standard
basis of $\ell_2(\mathbb Z)$). Then the spectrum of $T(s)$ equals
to the unit circle for any nonzero $s$, while the spectrum of
$T(0)$ is the whole closed unit disc, and there is no convergence
of spectra as $s\to 0$.
For the case of pseudospectra, the
situation is better. It was noticed by many authors that, to the
opposite to usual spectra, pseudospectra supply a vast
quantitative information on the behavior of powers of operators,
the semigroups they generate, etc. Our work also gives some
results in this direction.

\smallskip
Our main results are as follows.
In Section~\ref{elem-estims}, we prove several elementary
estimates and properties for the function  $\Psi_T(z)$.  In
particular, we show that
it is locally semiconvex (see the definition below).
The list of
these properties certainly can be extended. However, the question
of describing all functions on $\BC$ representable as $\Psi_T(z)$
for a Hilbert (or Banach) space operator $T$ seems to be open and
might be interesting. We use the results of
Section~\ref{elem-estims} in the next sections. We believe that
these results may also be important for algorithms of numerical
calculation of pseudospectra.

Section~\ref{gen-conv-thms} is devoted to general convergence
results for pseudospectra and for the function $\Psi_T(z)$.  One
of our starting points was the result by  N. Brown, which says
that if $T$ is quasidiagonal operator and $\{P_n\}$ is a
corresponding filtration, then for any $\eps$, the
$\eps$-pseudospectra of $T_n$ tend to the $\eps$-pseudospectrum of
$T$, see \cite{brown}, Theorem~3.5~(1). We observe that a similar
assertion holds also for quasitriangular operators.
 We prove that for any
quasitriangular operator $T$ and the corresponding filtration $\{P_n\}$,
the functions $\Psi_{T_n}$ tend uniformly to $\Psi_{T}$ on the whole complex plane.
This permits us to show that for an arbitrary operator $T$, there is a sequence
of matrices $S_n$ such that $\Psi_{S_n}$ tend uniformly to $\Psi_{T}$ on $\mathbb C$.
Here we use the theorem by Apostol, Foia\c{s} and Voiculescu, which
characterizes quasitriangular operators
in terms of semi-Fredholmness.

In Section~\ref{sect-shapes}, we use the above convergence results
to prove that, in a sense, the function  $\Psi_T(z)$, corresponding to a nilpotent
matrix $T$, can have any imaginable shape. In this proof, we apply our approximation
results to the adjoint to the operator of multiplication by the independent variable on
the Hardy space $H^2$ of
a domain in $\BC$ and to direct sums of such operators.

The function $\Psi_T(z)$ only depends on the norms of the
resolvent of $T$. One can ask about estimates of other functions
of $T$. In Section~\ref{sect-multipls}, we prove an approximation
result in this direction. We show that for a Cowen-Douglas class
operator $T$, a function $f$, holomorphic at $0$, and a filtration
$\{P_n\}$, chosen in a special way (so that all finite sections
$T_n$ are nilpotent), the norms of $f(T_n)$ are uniformly bounded
if and only if $f$ belongs to a certain multiplier space. Notice
that the Cowen-Douglas class is a particular (and well-understood)
subclass of quasitriangular operators.

This result motivated Example~\ref{ex-mult}, where we show that, to
the opposite to the function $\Psi_S(z)$
(which is Lipschitz with constant $1$),
no uniform Lipschitz estimates for the function
$\|\sqrt{S-z}\,\|$ are possible in a neighbourhood of $1$, even if
$S$ is assumed to be a finite nilpotent matrix.

\

\section{Elementary estimates}
\label{elem-estims}
Let $T$ be an operator on a Hilbert space $\mathcal{H}$.
We denote the
kernel of $T$ and range of $T$ by $\ker T$ and $\Ran T$
respectively. If $\mathcal H_{0}$ is a closed subspace of
$\mathcal H$, then we shall write $\mathcal H_{0}\subseteq
\mathcal H$. For  $T \in \mathcal B(\mathcal H)$, we shall denote
the spectrum, point spectrum, left spectrum and right spectrum by
$\sigma(T), \sigma_{p}(T), \sigma_{l}(T)$ and $\sigma_{r}(T)$
respectively.

Given a point $z\in \C$,
we
recall that \textit{the injectivity radius} $j_{T}(z)$
and
\textit{the surjectivity radius} $k_{T}(z)$ of $T-z$
are defined by
\begin{align*}
j_{T}(z)& =\inf\{\|(T-z)h\|: h\in \mathcal H, \|h\|=1\}, \\
k_{T}(z)& =\sup\{r:(T-z)B_{\mathcal H}\supset r B_{\mathcal H}\},
\end{align*}
where $B_{\mathcal H}=\{h\in \mathcal H: \|h\|\leq1\}$. The
following proposition gives a relation between these two
characteristics.

\begin{prop}\mbox{\cite[Theorem 7, Theorem 8]{ muller}:}
\label{Muller}
\begin{itemize}

\item[(i)]
For any $T \in \mathcal B(\mathcal H)$ and any $z\in\C$,
$j_{T}(z)=k_{T^*}(\bar{z})$.
\item[(ii)]
If $T -z$ is invertible, then
$$
j_{T}(z)=k_{T}(z)=\Psi_T(z).
$$
\end{itemize}
\end{prop}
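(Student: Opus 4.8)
The plan is to prove both items by unwinding the definitions and using duality between $T$ and $T^*$. For item~(i), I would start from the definition of $k_T(z)$: the condition $(T-z)B_{\mathcal H}\supset rB_{\mathcal H}$ means that every vector of norm at most $r$ lies in the image of the closed unit ball under $T-z$. The standard fact here is that for a bounded operator $A$, one has $AB_{\mathcal H}\supset rB_{\mathcal H}$ if and only if $\|A^*h\|\ge r\|h\|$ for all $h$; this is essentially the closed range/open mapping circle of ideas, and it is exactly the content of the cited Theorems~7 and~8 of M\"uller. Applying this with $A=T-z$ gives $k_T(z)=\inf\{\|(T-z)^*h\|:\|h\|=1\}=\inf\{\|(T^*-\bar z)h\|:\|h\|=1\}=j_{T^*}(\bar z)$. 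Replacing $T$ by $T^*$ and $z$ by $\bar z$ and using $T^{**}=T$ yields the stated form $j_T(z)=k_{T^*}(\bar z)$.

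For item~(ii), assume $T-z$ is invertible. Then $j_T(z)=\inf\{\|(T-z)h\|:\|h\|=1\}$. Writing $g=(T-z)h$, as $h$ ranges over the unit sphere, $g$ ranges over $(T-z)$ applied to the unit sphere, and $\|h\|=\|(T-z)^{-1}g\|$, so
\[
j_T(z)=\inf_{g\ne 0}\frac{\|g\|}{\|(T-z)^{-1}g\|}=\left(\sup_{g\ne 0}\frac{\|(T-z)^{-1}g\|}{\|g\|}\right)^{-1}=\|(T-z)^{-1}\|^{-1}=\Psi_T(z).
\]
For $k_T(z)$, invertibility makes $(T-z)B_{\mathcal H}$ a neighbourhood of $0$; indeed $(T-z)B_{\mathcal H}\supset rB_{\mathcal H}$ is equivalent to $B_{\mathcal H}\supset r(T-z)^{-1}B_{\mathcal H}$, i.e. to $r\|(T-z)^{-1}\|\le 1$. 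Hence the supremum defining $k_T(z)$ is exactly $\|(T-z)^{-1}\|^{-1}=\Psi_T(z)$, giving $j_T(z)=k_T(z)=\Psi_T(z)$.

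Since the substantive equivalence in item~(i)—between a surjectivity-type lower bound on $A$ and an injectivity-type lower bound on $A^*$—is precisely what the cited M\"uller theorems supply, the main point is simply to set up the duality correctly; there is no real obstacle beyond bookkeeping with the adjoint and the substitution $z\mapsto\bar z$. One should be slightly careful that $k_T(z)$ is defined with a closed ball and a sup, so that in the non-invertible borderline cases the inclusion $(T-z)B_{\mathcal H}\supset k_T(z)B_{\mathcal H}$ need not itself hold; but for item~(ii) invertibility removes this subtlety entirely, and for item~(i) the equivalence is used as an identity of the two radii, which is exactly how M\"uller states it.
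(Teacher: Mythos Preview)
The paper does not supply its own proof of this proposition: it is quoted verbatim as a known result from M\"uller's book and used as a black box in the subsequent Lemma~\ref{psi}. So there is nothing in the paper to compare your argument against.

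That said, your sketch is correct. For~(i), the duality you invoke---$AB_{\mathcal H}\supset rB_{\mathcal H}$ if and only if $\|A^*h\|\ge r\|h\|$ for all $h$---is exactly the content of the cited theorems; one direction is a one-line Cauchy--Schwarz estimate, and the other uses that $A^*$ bounded below forces $A$ to be surjective with a controlled right inverse on $(\ker A)^\perp$. Your remark about the borderline case is apt but harmless here, since both radii are defined as an infimum and a supremum, and the identity is between these extremal values. For~(ii), the two computations you give are the standard ones and are entirely correct.
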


As a consequence, we will prove the following lemma.
\begin{lem}\label{psi}
The following assertions hold.
\begin{enumerate}
\item $\Psi_{T}(z)=\min \big(j_{T}(z), j_{T^*}(\bar{z})\big)$.
\item
If $j_{T}(z)>0$ and $j_{T^*}(\bar{z})>0$, then $j_{T}(z)=j_{T^*}(\bar{z})=\Psi_{T}(z)$.

\end{enumerate}
\end{lem}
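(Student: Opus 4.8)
The plan is to derive Lemma~\ref{psi} directly from Proposition~\ref{Muller}, splitting into the invertible and non-invertible cases for $T-z$. The key observation is that $\Psi_T(z)$ is governed by whether $T-z$ fails to be bounded below (injectivity radius $j_T(z)=0$) or fails to be surjective ($k_T(z)=0$), and that by part~(i) of Proposition~\ref{Muller} the latter is exactly $j_{T^*}(\bar z)=0$.

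First I would treat the case $z\notin\sigma(T)$. Then $T-z$ is invertible, so by Proposition~\ref{Muller}(ii) we have $j_T(z)=k_T(z)=\Psi_T(z)$, and by part~(i), $k_T(z)=j_{T^*}(\bar z)$; hence $j_T(z)=j_{T^*}(\bar z)=\Psi_T(z)$, which simultaneously gives the minimum in~(1) and the full statement of~(2) in this case. Next I would treat the case $z\in\sigma(T)$, where $\Psi_T(z)=0$ by definition. Here I must show $\min\big(j_T(z),j_{T^*}(\bar z)\big)=0$; equivalently, that $T-z$ being non-invertible forces either $j_T(z)=0$ or $k_T(z)=0$. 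This is the standard fact that an operator which is both bounded below ($j_T(z)>0$) and bounded below from below, i.e. surjective with $k_T(z)>0$, is invertible: $j_T(z)>0$ makes $T-z$ injective with closed range, and $k_T(z)>0$ forces $(T-z)B_{\mathcal H}$ to contain a ball, hence $\Ran(T-z)=\mathcal H$, so $T-z$ is a bijection and invertible by the open mapping theorem. Thus $z\in\sigma(T)$ implies $j_T(z)\,k_T(z)=0$, and using $k_T(z)=j_{T^*}(\bar z)$ again, $\min\big(j_T(z),j_{T^*}(\bar z)\big)=0=\Psi_T(z)$, proving~(1) in this case. Finally, for~(2): if $j_T(z)>0$ and $j_{T^*}(\bar z)>0$, the contrapositive of what was just shown gives $z\notin\sigma(T)$, so we are back in the invertible case already handled.

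I do not expect any serious obstacle here; the lemma is essentially a repackaging of Müller's Proposition~\ref{Muller}. The only point requiring a line of justification is the elementary claim that $j_T(z)>0$ together with $k_T(z)>0$ implies invertibility of $T-z$ — and even this is immediate from the open mapping theorem once one notes that $k_T(z)>0$ yields surjectivity and $j_T(z)>0$ yields injectivity with closed range. One could alternatively cite this directly, or simply invoke Proposition~\ref{Muller}(ii) in contrapositive form together with~(i), since if $T-z$ were invertible we would already know everything. The main care is bookkeeping: making sure the convention $\Psi_T(z)=0$ on $\sigma(T)$ is used exactly where needed and that part~(2)'s hypothesis is correctly recognized as forcing invertibility.
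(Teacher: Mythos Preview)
Your proposal is correct and follows essentially the same approach as the paper, relying entirely on Proposition~\ref{Muller}. The only cosmetic difference is the case split: the paper divides according to whether both $j_T(z)>0$ and $j_{T^*}(\bar z)>0$ (versus one of them vanishing), whereas you divide according to whether $z\notin\sigma(T)$ (versus $z\in\sigma(T)$); since you explicitly argue that these two dichotomies coincide, the arguments are logically identical, with yours spelling out a bit more fully why $j_T(z)>0$ and $k_T(z)>0$ together force invertibility.
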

\begin{proof}
Suppose first that both $j_{T}(z)>0$ and $j_{T^*}(\bar z)>0$.  Then using Proposition \ref{Muller}, we get
$j_{T}(z)=j_{T^*}(\bar{z})=\Psi_{T}(z)$, so that (1) holds in this case. This also gives (2).

Now suppose $j_{T}(z)=0$. Then $T-z$ is not invertible.
Hence $\Psi_{T}(z)=0=\min \big(j_{T}(z), j_{T^*}(\bar{z})\big)$.
Similarly, if $j_{T^*}(\bar{z})=0$, then also $\Psi_{T}(z)=0$. This completes the proof.
\end{proof}

\medskip

Let $f: K\to \mathbb C$ be a function, defined on a
subset $K$ of the complex plane
and let $C>0$. In what follows, we will write $f\in
\Lip_C(K)$ if $f$ is a Lipschitz function with constant $C$, that
is, $|f(z)-f(z')|\le C |z-z'|$ for all $z, z'\in K$.

\begin{lem}\label{lips1}
For any $ T \in \mathcal B(\mathcal H)$, $j_T\in Lip_1(\C)$.
\end{lem}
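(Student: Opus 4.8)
The plan is to show directly that the injectivity radius $j_T$ is $1$-Lipschitz by exploiting its definition as an infimum of a family of functions, each of which is itself $1$-Lipschitz in $z$. For a fixed unit vector $h\in\mathcal H$, consider the function $z\mapsto \|(T-z)h\|$. Since $\big|\,\|(T-z)h\| - \|(T-z')h\|\,\big|\le \|(z-z')h\| = |z-z'|$ for all $z,z'\in\C$ (using $\|h\|=1$ and the reverse triangle inequality), each such function lies in $\Lip_1(\C)$. The infimum of an arbitrary family of functions that are all $\Lip_1$ is again $\Lip_1$: if $f_i\in\Lip_1(\C)$ for $i$ in some index set and $f=\inf_i f_i$, then for any $z,z'$ and any $i$ we have $f(z)\le f_i(z)\le f_i(z')+|z-z'|$; taking the infimum over $i$ on the right gives $f(z)\le f(z')+|z-z'|$, and by symmetry $|f(z)-f(z')|\le|z-z'|$. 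Applying this with the family indexed by unit vectors $h$ yields $j_T\in\Lip_1(\C)$.

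First I would record the elementary pointwise estimate $\big|\,\|(T-z)h\| - \|(T-z')h\|\,\big|\le |z-z'|$ for unit $h$, which is immediate from $(T-z)h-(T-z')h=(z'-z)h$ and the triangle inequality. Second I would state and prove (in one line) the general lemma that an infimum of a family of $\Lip_C$ functions is $\Lip_C$. Third I would combine these, noting $j_T(z)=\inf_{\|h\|=1}\|(T-z)h\|$ is exactly such an infimum with $C=1$.

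I do not anticipate a genuine obstacle here; this is a soft argument with no hard analysis. The only point requiring a little care is that the infimum is over a nonempty family (which it is, since $\mathcal H\ne\{0\}$ in the nontrivial case; if $\mathcal H=\{0\}$ the statement is vacuous) and that one does not need any finiteness or attainment of the infimum — the $\Lip$ inequality propagates through arbitrary infima. One could alternatively phrase the whole thing as: $j_T(z)=\dist\big(0,\{(T-z)h:\|h\|=1\}\big)$ and use that translating a set by a vector of norm $|z-z'|$ moves its distance-to-origin by at most $|z-z'|$; but the infimum-of-Lipschitz-functions formulation is cleanest and is what I would write.
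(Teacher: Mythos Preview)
Your proposal is correct and takes essentially the same approach as the paper: both use the reverse triangle inequality to get $\|(T-z')h\|\le \|(T-z)h\|+|z-z'|$ for unit $h$, then pass to the infimum and invoke symmetry. The only difference is packaging---you state explicitly the general principle that an infimum of $\Lip_1$ functions is $\Lip_1$, whereas the paper carries out that step in a single line without naming it.
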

\begin{proof}
Take any $z,z^{\prime} \in \mathbb C $. Then we have
$\|(T-z^{\prime}) h \|\leq \|(T-z) h \|+ |z-z^{\prime}|$ for
any $h \in \mathcal H$ with $\|h\|=1$. Therefore
$j_{T}(z')-j_{T}(z)\leq |z-z^{\prime}|$. By symmetry, this implies
the statement of Lemma.
\end{proof}

Since $\Psi_{T}(z)=\min \big(j_{T}(z), j_{T^*}(\bar{z})\big)$ and
the minimum of two $\Lip_1(\C)$ functions is again a $\Lip_1(\C)$
function, we get the following corollary.
\begin{cor}\label{lips3}
For any $ T \in \mathcal B(\mathcal H)$, $\Psi_T\in \Lip_1(\C)$.
\end{cor}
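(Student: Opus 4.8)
The statement follows almost immediately from the two preceding results, so the "proof" is really just an assembly of facts already in hand. The plan is to invoke Lemma \ref{psi}(1), which gives the pointwise identity $\Psi_T(z)=\min\big(j_T(z),\,j_{T^*}(\bar z)\big)$, and then to observe that each of the two functions appearing in this minimum lies in $\Lip_1(\C)$. For the first, this is exactly Lemma \ref{lips1} applied to $T$. For the second, I would apply Lemma \ref{lips1} to the operator $T^*$, obtaining $j_{T^*}\in\Lip_1(\C)$, and then note that the map $z\mapsto \bar z$ is an isometry of $\C$, so $z\mapsto j_{T^*}(\bar z)$ is again in $\Lip_1(\C)$.

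The only remaining point is the elementary fact, already asserted in the sentence preceding the corollary, that the pointwise minimum of two functions in $\Lip_1(K)$ is again in $\Lip_1(K)$. If one wants to spell this out: given $f,g\in\Lip_1(K)$ and $z,z'\in K$, write $h=\min(f,g)$; then $h(z)\le f(z)\le f(z')+|z-z'|$ and likewise $h(z)\le g(z')+|z-z'|$, so $h(z)\le h(z')+|z-z'|$, and by symmetry $|h(z)-h(z')|\le|z-z'|$. Combining the three observations — $j_T\in\Lip_1(\C)$, $z\mapsto j_{T^*}(\bar z)\in\Lip_1(\C)$, and closure of $\Lip_1$ under $\min$ — yields $\Psi_T\in\Lip_1(\C)$.

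There is essentially no obstacle here; the content of the corollary has been fully prepared by Lemma \ref{psi} and Lemma \ref{lips1}, and the argument is a one-line deduction. If anything merits a word of care, it is only making sure that the complex-conjugation in the second slot of the minimum is handled correctly (it is harmless precisely because conjugation preserves distances in $\C$), and noting that the identity $\Psi_T=\min(j_T, j_{T^*}(\bar\cdot))$ from Lemma \ref{psi}(1) holds at every point $z\in\C$, including points of the spectrum, where all three quantities vanish.
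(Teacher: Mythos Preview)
Your proposal is correct and follows exactly the approach taken in the paper: the corollary is deduced from the identity $\Psi_T(z)=\min\big(j_T(z),\,j_{T^*}(\bar z)\big)$ of Lemma~\ref{psi}(1), the $\Lip_1$ property of $j_T$ from Lemma~\ref{lips1}, and the closure of $\Lip_1(\C)$ under pointwise minimum. The paper states this in a single sentence preceding the corollary; your additional remarks about conjugation being an isometry and the verification of the $\min$-closure fact are correct elaborations of details the paper leaves implicit.
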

This fact is known, see Theorem 9.2.15 from
the
E. Brian Davies' book \cite{Dav}. It holds, in fact, for any
Banach space operator.

Put
\[
\rho_{\theta}(T)=\sup_{\|h\|=1} \Re \, \langle e^{-i\theta} T h, h \rangle, \qquad \theta\in [0,2\pi].
\]
The function
$\rho_{\theta}(T)$ has the following geometrical interpretation.
Given a bounded convex subset $A$ of $\C$, its \textit{support function}
is defined as
$s_A(\theta)=\sup_{z\in A}\Re (e^{-i\theta} z)$
(so that $A$ is contained in the half-plane
$\big\{\Re (e^{-i\theta} z) \le s_A(\theta)\big\}$, but is not contained in
half-planes $\big\{\Re (e^{-i\theta} z) \le \si\big\}$ for $\si < s_A(\theta)$).
It is easy to see that
\[
\rho_{\theta}(T)=
s_{W(T)}(\theta),
\]
where $W(T)=\{\langle Th, h\rangle: \|h\| = 1\}$ is the numerical
range of $T$ (it is always convex, by the Toeplitz-Hausdorff
Theorem). Notice that $\rho_{\theta}(T)$ is always a continuous
function of~$\theta$.

By \cite[Theorem 17.4]{Trefeth-book},
\begin{equation}
\label{num_range}
\rho_\theta(T)=\lim_{r\to+\infty} r-\Psi_T(re^{i\theta}), \quad \theta\in [0,2\pi].
\end{equation}
The following proposition is a slightly more precise version of
this equality. It will be used in Section~\ref{gen-conv-thms}
below.

\begin{prop}\label{rhot}(cf. \cite[Theorem $17.4$]{Trefeth-book})
Let $ T \in \mathcal B(\mathcal H)$.
Then for any $z= r e^{i\theta}$ with $|z| = r > \rho_{\theta}(T)$,
 \begin{equation}\label{equ}
 |z|-\rho_{\theta}(T)\leq \Psi_{T}(z) \leq
 \sqrt{|z|^2-2\rho_{\theta}(T)|z|+\|T\|^2}\, .
 \end{equation}
\end{prop}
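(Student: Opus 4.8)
The plan is to prove the two inequalities of \eqref{equ} separately, each by a short computation with $\|(T-z)h\|^2$; beyond the definitions, the only inputs needed are Lemma~\ref{psi} and the inclusion $\sigma(T)\subseteq\overline{W(T)}$.

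For the lower bound, I would fix $z=re^{i\theta}$ with $r=|z|>\rho_{\theta}(T)$ and estimate, for a unit vector $h\in\cH$,
\[
\|(T-z)h\|\ge -\Re\big(e^{-i\theta}\langle(T-z)h,h\rangle\big)=r-\Re\big(e^{-i\theta}\langle Th,h\rangle\big)\ge r-\rho_{\theta}(T),
\]
where the first inequality is $\|v\|\ge|\langle v,h\rangle|\ge-\Re(e^{-i\theta}\langle v,h\rangle)$ for $\|h\|=1$, the middle equality uses $e^{-i\theta}z=r$, and the last inequality is the definition of $\rho_{\theta}(T)=s_{W(T)}(\theta)$. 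Taking the infimum over unit vectors gives $j_{T}(z)\ge|z|-\rho_{\theta}(T)$, and the same computation applied to $T^{*}$ and $\bar z$ (using $\langle T^{*}h,h\rangle=\overline{\langle Th,h\rangle}$) gives $j_{T^{*}}(\bar z)\ge|z|-\rho_{\theta}(T)$. Then Lemma~\ref{psi}(1) yields $\Psi_{T}(z)=\min\big(j_{T}(z),j_{T^{*}}(\bar z)\big)\ge|z|-\rho_{\theta}(T)$.

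For the upper bound, I would first note that $r>\rho_{\theta}(T)$ places $z$ strictly outside the closed half-plane $\{w:\Re(e^{-i\theta}w)\le\rho_{\theta}(T)\}$, which contains $\overline{W(T)}\supseteq\sigma(T)$; hence $T-z$ is invertible and $\Psi_{T}(z)=j_{T}(z)$ by Lemma~\ref{psi}(2) (equivalently, by Proposition~\ref{Muller}(ii)). Expanding the square, for any unit vector $h$,
\[
\|(T-z)h\|^{2}=\|Th\|^{2}-2|z|\,\Re\big(e^{-i\theta}\langle Th,h\rangle\big)+|z|^{2}\le\|T\|^{2}-2|z|\,\Re\big(e^{-i\theta}\langle Th,h\rangle\big)+|z|^{2}.
\]
Given $\eps>0$, I would pick a unit vector $h_{\eps}$ with $\Re\big(e^{-i\theta}\langle Th_{\eps},h_{\eps}\rangle\big)>\rho_{\theta}(T)-\eps$; then $j_{T}(z)^{2}\le\|(T-z)h_{\eps}\|^{2}\le|z|^{2}-2\rho_{\theta}(T)|z|+\|T\|^{2}+2|z|\eps$, and letting $\eps\to0$ gives the claimed upper bound.

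I do not expect a serious obstacle here. The two points that need a moment's attention are the invertibility of $T-z$ (this is what allows $\Psi_{T}(z)$ to be replaced by $j_{T}(z)$ in the upper bound, and it is where the hypothesis $|z|>\rho_{\theta}(T)$ is genuinely used) and the nonnegativity of the radicand; the latter is automatic, since $\rho_{\theta}(T)\le\|T\|$ forces $|z|^{2}-2\rho_{\theta}(T)|z|+\|T\|^{2}\ge(|z|-\rho_{\theta}(T))^{2}$, which is consistent with the lower bound already obtained.
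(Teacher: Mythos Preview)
Your proof is correct and follows essentially the same route as the paper: expand $\|(T-z)h\|^2$, use Cauchy--Schwarz for the lower bound, and bound $\|Th\|^2\le\|T\|^2$ while maximizing $\Re(e^{-i\theta}\langle Th,h\rangle)$ for the upper bound. The paper's version is slightly terser---it notes $z\notin\sigma(T)$ at the outset and works throughout with $\Psi_T(z)=j_T(z)$, so the separate $j_{T^*}(\bar z)$ estimate and the explicit $\varepsilon$ argument are not needed---but the substance is the same.
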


Notice that the inequality $\sqrt{a^2+b} \leq a +\frac{b}{2a}$ (valid for $a>0, a^2+b>0$) gives
\beqn
\label{num_range_mod}
\sqrt{|z|^2-2\rho_{\theta}(T)|z|+\|T\|^2}
\leq
\big(|z|-\rho_{\theta}(T)\big)
 +
\frac{\|T\|^2-\rho_{\theta}(T)^2}{2(|z|-\rho_{\theta}(T))}\, ,
\eeqn
so that the difference between the upper and the lower estimates in
\eqref{equ} tends to $0$ as $|z|\to \infty$.

\begin{proof}[Proof of Proposition \ref{rhot}]
Let $z=r e^{i\theta}$,
$|z|>\rho_\theta(T)$. Then $z\notin\si(T)$, and
\[
(\Psi_{T}(z))^{2}=\inf_{\|h\|=1}\{r^2-2 r \Re\,\langle
e^{-i\theta} Th,  h\rangle+\|Th\|^2\}.
\]
Since $\|Th\|\leq \|T\|$ for all $h$ with $\|h\|=1$, we get
\[
(\Psi_{T}(z))^{2}\leq r^2-2 \rho_{\theta}(T) r +\|T\|^2.
\]

On the other hand, since $\|Th\|\ge \Re\, \langle
 e^{-i\theta} Th,
h\rangle$, we see that
\[
\Psi_{T}(z)^{2}\geq
\inf_{\|h\|=1}(r - \Re\,\langle   e^{-i\theta} Th,  h\rangle)^2 =
(r-\rho_{\theta}(T))^{2},
\]
which gives the first inequality in~\eqref{equ}.
This completes the proof.
\end{proof}

The following lemma estimates the ratio between
the values of $\Psi_{T}$ in two points of the plane.
\begin{lem}\label{epsilon}
Suppose $ T \in \mathcal B(\mathcal H)$. Then
$\frac{\Psi_{T}(z_0)}{|z_0|} \leq \frac{\Psi_{T}(z)}{|z|}(1+\epsilon_{z,z_0})$,
where $\epsilon_{z,z_0}=\frac{\|T\||z-z_0|}{|z_0|\Psi_{T}(z)}$.
\end{lem}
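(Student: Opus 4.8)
The plan is to work entirely with the spatial quantities $j_T(w)=\inf_{\|h\|=1}\|(T-w)h\|$ and $j_{T^*}(\bar w)$, since by Lemma~\ref{psi}(1) one has $\Psi_T(w)=\min\big(j_T(w),j_{T^*}(\bar w)\big)$, which will let me reduce the desired multiplicative estimate for $\Psi_T$ to two parallel estimates, one for $j_T$ and one for $j_{T^*}$. Everything rests on the elementary operator identity
\[
z\,(T-z_0)-z_0\,(T-z)=(z-z_0)\,T,
\]
which gives, for every unit vector $h$,
\[
|z|\,\|(T-z_0)h\|\le |z_0|\,\|(T-z)h\|+|z-z_0|\,\|Th\|\le |z_0|\,\|(T-z)h\|+|z-z_0|\,\|T\|.
\]

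Next I would pass to infima over unit vectors $h$. The left-hand side of the last display has infimum exactly $|z|\,j_T(z_0)$; on the right-hand side one first bounds $\|Th\|\le\|T\|$ (the one spot needing a little care, since the same $h$ is being used to estimate both sides) and then the infimum is $|z_0|\,j_T(z)+|z-z_0|\,\|T\|$, so a routine $\delta$-argument yields
\[
|z|\,j_T(z_0)\le |z_0|\,j_T(z)+|z-z_0|\,\|T\| ,
\]
and the same reasoning applied to $T^*$ at the conjugate points gives $|z|\,j_{T^*}(\bar z_0)\le |z_0|\,j_{T^*}(\bar z)+|z-z_0|\,\|T\|$. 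Since $\Psi_T(z)$ occurs in the denominator of $\epsilon_{z,z_0}$, I may assume $\Psi_T(z)>0$, that is, $z\notin\sigma(T)$; then Proposition~\ref{Muller}(ii), applied to $T$ and to $T^*$, gives $j_T(z)=j_{T^*}(\bar z)=\Psi_T(z)$. Substituting this into the two inequalities and taking the minimum of their left-hand sides, which equals $|z|\,\Psi_T(z_0)$ by Lemma~\ref{psi}(1), I obtain
\[
|z|\,\Psi_T(z_0)\le |z_0|\,\Psi_T(z)+|z-z_0|\,\|T\| .
\]
Dividing by $|z|\,|z_0|$ and noting that $\frac{\|T\|\,|z-z_0|}{|z|\,|z_0|}=\frac{\Psi_T(z)}{|z|}\,\epsilon_{z,z_0}$ is exactly the asserted inequality.

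I do not expect a real obstacle: the content of the lemma is the identity $z(T-z_0)-z_0(T-z)=(z-z_0)T$ plus bookkeeping, the only slightly delicate point being the passage to infima described above. As an alternative that I would record as a remark, one can use the scaling identity $\Psi_{\lambda T}(\lambda w)=|\lambda|\,\Psi_T(w)$ (immediate from the definition, for $\lambda\neq 0$): writing $\Psi_T(z)/|z|=\Psi_{T/z}(1)$ and $\Psi_T(z_0)/|z_0|=\Psi_{T/z_0}(1)$, the claim becomes the Lipschitz-in-the-operator estimate $\big|\Psi_S(1)-\Psi_{S'}(1)\big|\le\|S-S'\|$ for $S=T/z_0$, $S'=T/z$, together with $\|T/z_0-T/z\|=\|T\|\,|z-z_0|/(|z|\,|z_0|)$; that operator-Lipschitz estimate is proved, just like Lemma~\ref{lips1}, from $\big|\,\|(S-1)h\|-\|(S'-1)h\|\,\big|\le\|S-S'\|$ for unit $h$ combined with Lemma~\ref{psi}(1).
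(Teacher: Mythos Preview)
Your proof is correct, but it follows a different route from the paper. The paper works directly with resolvents: it writes the factorization $(T-z)^{-1}=\frac{z_0}{z}(T-z_0)^{-1}S_{z,z_0}$ with $S_{z,z_0}=\frac{z}{z_0}(T-z_0)(T-z)^{-1}$, bounds $\|S_{z,z_0}-I\|\le\epsilon_{z,z_0}$, and takes norms. You instead stay on the ``forward'' side, applying the same underlying algebraic identity $z(T-z_0)-z_0(T-z)=(z-z_0)T$ to unit vectors, passing to infima to obtain inequalities for $j_T$ and $j_{T^*}$, and then invoking Lemma~\ref{psi} and Proposition~\ref{Muller} to recombine into $\Psi_T$. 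Your argument is a little longer but arguably more elementary, since it never requires writing down $(T-z_0)^{-1}$; in particular it makes transparent that the case $z_0\in\sigma(T)$ (where $\Psi_T(z_0)=0$) needs no separate treatment. The scaling reformulation you sketch at the end, reducing the statement to the operator-Lipschitz estimate $|\Psi_S(1)-\Psi_{S'}(1)|\le\|S-S'\|$ for $S=T/z_0$, $S'=T/z$, is also a clean way to package the same computation.
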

\begin{proof}
For $T \in \mathcal B(\mathcal H)$, we have
\begin{equation}\label{z_0}
(T-z)^{-1}=\frac{z_0 }{z}\,(T-z_0)^{-1}S_{z,z_0},\end{equation}
where $S_{z,z_0}=\frac{z }{z_0}(T-z_0 )(T-z)^{-1}$.
Also,
\beqn
\label{s-z}
\|S_{z,z_0}-I\|
  \le \|(T-z)^{-1}\| \, \big\|\frac{z }{z_0}(T-z_0 )-(T-z)\big\|
   =\frac{\|T\||z-z_0|}{|z_0|\Psi_{T}(z)}
   = \eps_{z,z_0}\, .
\eeqn
Putting together \eqref{z_0} and \eqref{s-z}, we get
\[
\Psi_T(z)^{-1}= \|(T-z)^{-1}\| \leq\|(T-z_0)^{-1}\|\|S_{z,z_0}\|\frac{|z_0|}{|z|}
\leq
\frac{(1+\epsilon_{z,z_0})|z_0|}{\Psi_{T}(z_0)\,|z|}\, .
\]
This completes the proof.
\end{proof}
Using the above Lemma, we will prove the following theorem.
\begin{thm}\label{z by psi}
For any $c>\|T\|$, the restriction $\frac{\Psi_{T}(z)}{|z|}_{\big|
\{|z|\ge c\}}$ is a $\Lip_{\eta(c)}$ function, where $ \eta(c)=
\frac{\|T\| }{c^2} . $ \mg{I changed the formulation}
\end{thm}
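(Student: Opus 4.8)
The plan is to deduce the theorem directly from Lemma \ref{epsilon}. First I would note that for every $z$ with $|z|\ge c$ one has $|z|>\|T\|$, hence $z\notin\sigma(T)$, and moreover $\Psi_T(z)\ge |z|-\|T\|>0$ (for instance by the first inequality in Proposition~\ref{rhot}, or simply because $\|(T-z)^{-1}\|\le (|z|-\|T\|)^{-1}$). In particular, $T-z$ is invertible and $\Psi_T(z)>0$ throughout the set $\{|z|\ge c\}$, so all the quantities occurring in Lemma~\ref{epsilon} are well defined there, and that lemma is applicable to any pair of points of $\{|z|\ge c\}$.

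Now fix $z,z_0$ with $|z|\ge c$ and $|z_0|\ge c$. The conclusion of Lemma~\ref{epsilon}, namely
\[
\frac{\Psi_{T}(z_0)}{|z_0|}\leq \frac{\Psi_{T}(z)}{|z|}\bigl(1+\epsilon_{z,z_0}\bigr),
\qquad \epsilon_{z,z_0}=\frac{\|T\|\,|z-z_0|}{|z_0|\,\Psi_{T}(z)},
\]
rearranges, after cancelling the factors $\Psi_T(z)$, to
\[
\frac{\Psi_{T}(z_0)}{|z_0|}-\frac{\Psi_{T}(z)}{|z|}
\leq \frac{\Psi_{T}(z)}{|z|}\cdot\frac{\|T\|\,|z-z_0|}{|z_0|\,\Psi_{T}(z)}
=\frac{\|T\|\,|z-z_0|}{|z|\,|z_0|}.
\]
Interchanging the roles of $z$ and $z_0$ in Lemma~\ref{epsilon} yields the reverse inequality, so that
\[
\biggl|\frac{\Psi_{T}(z)}{|z|}-\frac{\Psi_{T}(z_0)}{|z_0|}\biggr|
\leq \frac{\|T\|}{|z|\,|z_0|}\,|z-z_0|.
\]

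Finally, since $|z|\ge c$ and $|z_0|\ge c$, one has $\dfrac{1}{|z|\,|z_0|}\le\dfrac{1}{c^2}$, and therefore
\[
\biggl|\frac{\Psi_{T}(z)}{|z|}-\frac{\Psi_{T}(z_0)}{|z_0|}\biggr|
\leq \frac{\|T\|}{c^2}\,|z-z_0|=\eta(c)\,|z-z_0|,
\]
which is precisely the asserted $\Lip_{\eta(c)}$ estimate for the restriction of $z\mapsto \Psi_T(z)/|z|$ to $\{|z|\ge c\}$. I do not anticipate any genuine obstacle in this argument: it is a two-line computation once Lemma~\ref{epsilon} is in hand. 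The only point that deserves explicit mention is the verification that $\Psi_T$ does not vanish on $\{|z|\ge c\}$ (equivalently, that $T-z$ is invertible there), since that is what legitimizes the use of Lemma~\ref{epsilon} and the cancellation of the $\Psi_T(z)$ terms; this is immediate from $c>\|T\|$.
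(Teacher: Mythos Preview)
Your proof is correct and follows essentially the same approach as the paper: both apply Lemma~\ref{epsilon} twice (with the roles of $z$ and $z_0$ interchanged), simplify the resulting bound to $\dfrac{\|T\|}{|z|\,|z_0|}\,|z-z_0|$, and then use $|z|,|z_0|\ge c$. Your version is marginally tidier in that you cancel $\Psi_T(z)$ directly rather than writing out a $\max$ of two identical quantities, and you make explicit the (easy) point that $\Psi_T>0$ on $\{|z|\ge c\}$, which the paper leaves implicit.
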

\begin{proof}
Take any $z, z_0\in \C$ such that $|z|, |z_0|\ge c$.
By applying twice Lemma \ref{epsilon}, we get
\begin{equation} \label{psi T2}
\frac{\Psi_{T}(z_0)}{|z_0|}-\frac{\Psi_{T}(z)}{|z|}\leq \epsilon\frac{\Psi_{T}(z)}{|z|}
~\rm{~and~}~
\frac{\Psi_{T}(z)}{|z|}-\frac{\Psi_{T}(z_0)}{|z_0|}\leq \delta\frac{\Psi_{T}(z_0)}{|z_0|},
\end{equation}
where
\[
\epsilon=\frac{\|T\||z-z_0|}{|z_0|\Psi_{T}(z)}, \quad
\delta = \frac{\|T\||z-z_0|}{|z|\Psi_{T}(z_0)}.
\]
Therefore,
\begin{align}
\label{psi T3}
\Big|\frac{\Psi_{T}(z)}{|z|}-\frac{\Psi_{T}(z_0)}{|z_0|}\Big|\nonumber
&\leq
\max\Big(\frac{\Psi_{T}(z)}{|z|}\,\epsilon,\frac{\Psi_{T}(z_0)}{|z_0|}\,\delta\Big)
\\\nonumber &
=  \max\Big(\frac{\Psi_{T}(z)}{|z|} \, \frac{\|T\||z-z_0|}{|z_0|\Psi_{T}(z)},
\frac{\Psi_{T}(z_0)}{|z_0|} \, \frac{\|T\||z-z_0|}{|z|\Psi_{T}(z_0)}\Big)
\\\nonumber&= \|T\|
\max\Big\{
\frac{1}{|z_0||z|}, \frac{1}{|z_0||z|} \Big\} \, |z-z_0|
\\\nonumber
&\leq
\frac{\|T\|}{c^2}|z-z_0|
\\&= \eta(c)|z-z_0|
\end{align}
whenever $|z|, |z_0|\ge c$, and we are done.
\end{proof}

We recall the definition of semiconvex functions, see
the book of P. Cannarsa and C. Sinestrari \cite{semiconcave}.
\begin{defn}
Let $A\subset \mathbb R^n$ be an open set and let
$u:A\rightarrow \mathbb R$ be a continuous function.

\begin{enumerate}

\item
We will say that
$u$ is \textit{semiconvex with a constant $C \geq 0$}
if
\[
2u(\mu)-u(\mu+\eta)-u(\mu-\eta)\leq C |\eta|^2
\]
for all  $\mu, \eta\in \R^n$ such that $[\mu-\eta, \mu+\eta]\subset A$.

\item Let $C:A\to \R$ be a positive continuous function. We will
say that $u$ is \textit{semiconvex with bound function $C(x)$} if
for any compact convex subset $B$ of $A$, the restriction $u_{\mid
B}$ is semiconvex with constant $C'=\max_{x\in B} C(x)$.
\end{enumerate}
\end{defn}

\begin{thm}\label{three point}
The function
$\Psi_{T}^{-1}$ is semiconvex on $\C\sm\si(T)$ with bound function
\[
C(z)=2 \Psi_{T}(z)^{-3}.
\]
\end{thm}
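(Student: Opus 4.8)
The goal is to show that $u(z) := \Psi_T(z)^{-1} = \|(T-z)^{-1}\|$ is semiconvex with bound function $C(z) = 2\Psi_T(z)^{-3}$. The statement is local on $\C \setminus \sigma(T)$, so I may fix a compact convex $B \subset \C \setminus \sigma(T)$ and work with the constant $C' = 2(\min_{z\in B}\Psi_T(z))^{-3}$. I would identify $\C$ with $\R^2$ in the obvious way and write points of $B$ as $\mu$, displacements as $\eta$, so I need
\[
2u(\mu) - u(\mu+\eta) - u(\mu-\eta) \le C' |\eta|^2
\]
whenever $[\mu-\eta,\mu+\eta] \subset B$.

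**Key steps.** The natural approach is to reduce the three-point inequality to a second-derivative estimate along the segment. For any fixed unit vector $e \in \C$ and $z_0$ in the interior of $B$, consider $g(t) = \|(T - (z_0 + te))^{-1}\|$. The resolvent $R(z) = (T-z)^{-1}$ is a holomorphic $\mathcal B(\mathcal H)$-valued function with $R'(z) = R(z)^2$ and $R''(z) = 2R(z)^3$. Using the elementary fact that for a smooth Banach-space-valued curve $F(t)$ the scalar function $t \mapsto \|F(t)\|$ satisfies, in the sense of distributions (or via difference quotients), the bound
\[
2\|F(0)\| - \|F(h)\| - \|F(-h)\| \le \|F(0) - F(h)\| + \|F(0) - F(-h)\| \le \sup_{|t|\le h}\|F''(t)\| \cdot h^2 ,
\]
(the last step by Taylor's theorem with integral remainder applied to $F$), and then applying this to $F(t) = R(z_0 + te)$, we get $\|F''(t)\| = \|2R(z_0+te)^3\| \le 2\|R(z_0+te)\|^3 = 2\Psi_T(z_0+te)^{-3}$. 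Taking the supremum over the compact segment and over the convex $B$ gives exactly $2u(\mu)-u(\mu+\eta)-u(\mu-\eta) \le \big(\sup_{z\in B} 2\Psi_T(z)^{-3}\big)|\eta|^2 = C'|\eta|^2$, which is the required semiconvexity with bound function $C(z) = 2\Psi_T(z)^{-3}$. Continuity of $u$ on $\C\setminus\sigma(T)$ is standard (it follows from $\Psi_T \in \Lip_1$, Corollary \ref{lips3}, and positivity off the spectrum), so the hypotheses of the definition are met.

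**Main obstacle.** The one genuinely non-routine point is justifying the inequality $2\|F(0)\| - \|F(h)\| - \|F(-h)\| \le \sup_{|t|\le h}\|F''(t)\|\, h^2$ for a $C^2$ Banach-space-valued function. I would do this cleanly via Taylor's formula with integral remainder: $F(\pm h) = F(0) \pm h F'(0) + \int_0^h (h-s) F''(\pm s)\, ds$ (with the obvious sign adjustments), so $F(h) + F(-h) - 2F(0) = \int_0^h (h-s)\big(F''(s) + F''(-s)\big)\, ds$, whence $\|F(h)+F(-h)-2F(0)\| \le 2\sup_{|t|\le h}\|F''(t)\| \int_0^h (h-s)\,ds = \sup_{|t|\le h}\|F''(t)\|\, h^2$; then the triangle inequality $2\|F(0)\| - \|F(h)\| - \|F(-h)\| \le \|2F(0) - F(h) - F(-h)\|$ finishes it. Everything else — holomorphy of the resolvent, the formula $R'' = 2R^3$, submultiplicativity giving $\|R(z)^3\| \le \|R(z)\|^3$, and passing to the sup over $B$ — is routine.
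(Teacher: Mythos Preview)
Your final argument (in the ``Main obstacle'' paragraph) is correct and is essentially the same idea as the paper's: bound the operator second difference $\|2R(\mu)-R(\mu+\eta)-R(\mu-\eta)\|$ and then apply the triangle inequality $2\|R(\mu)\|-\|R(\mu+\eta)\|-\|R(\mu-\eta)\|\le\|2R(\mu)-R(\mu+\eta)-R(\mu-\eta)\|$. One warning: the intermediate chain you wrote in the ``Key steps'' paragraph,
\[
2\|F(0)\|-\|F(h)\|-\|F(-h)\|\le \|F(0)-F(h)\|+\|F(0)-F(-h)\|\le \sup_{|t|\le h}\|F''(t)\|\cdot h^2,
\]
is wrong at the second inequality: the middle term is first order in $h$ (take $F(t)=t$), so it cannot be bounded by $\sup\|F''\|\,h^2$. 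Fortunately you abandon this in favour of the correct Taylor-remainder argument giving $\|2F(0)-F(h)-F(-h)\|\le\sup\|F''\|\,h^2$, which is what you actually need.

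The paper's proof replaces your Taylor step by the exact resolvent identity
\[
2(T-\mu)^{-1}-(T-\mu+\eta)^{-1}-(T-\mu-\eta)^{-1}=-2\eta^2\,(T-\mu)^{-1}(T-\mu+\eta)^{-1}(T-\mu-\eta)^{-1},
\]
obtained by iterating $R(a)-R(b)=(a-b)R(a)R(b)$. Taking norms gives $2K^3|\eta|^2$ directly, with $K=\max_B\Psi_T^{-1}$. This is a one-line algebraic computation and avoids the integral remainder entirely; your route via $R''=2R^3$ and Taylor's formula reaches the same bound but with a bit more machinery.
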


\begin{proof}
Let $B$ be a compact convex subset of $\C\setminus \si(T)$, and put
$K=\max_{z\in B} \Psi_{T}^{-1}(z)$.
Suppose that an interval $[\mu - \eta, \mu + \eta]$ is contained
in $B$. Then
\[
2(T-\mu )^{-1}-(T-\mu+\eta)^{-1}-(T-\mu-\eta)^{-1}= -2\eta^2
\,(T-\mu)^{-1}(T-\mu+\eta)^{-1} (T-\mu-\eta)^{-1}\, ,
\]
which implies that
\[
2\|(T-\mu)^{-1}\| \le \|(T-\mu+\eta)^{-1}\|  +
 \|(T-\mu-\eta)^{-1}\| + 2 |\eta|^2 K^3.
\]
This gives our statement.
\end{proof}

Semiconvex functions
admit some interesting characterizations and have good
regularity properties.
We can cite the following facts.

\begin{prop}(see \cite{semiconcave}.)\label{semicon}
Given a continuous function $u :B \rightarrow \mathbb R$ with $B
\subset \mathbb R^n$ open and convex the following conditions are
equivalent:
\begin{enumerate}
\item [(a)] $u$ is semiconvex in $B$ with a semiconvexity constant
$C\geq 0$.

\item [(b)] $u$ satisfies
\[
\label{semiconv}
u(tx+(1-t)y)-tu(x)-(1-t)u(y)
\leq
C\,\frac{t(1-t)}{2}|x-y|^2
\]
for all $x,y$ such that $[x,y] \subset B$ and for all $t \in
[0,1].$

\item [(c)] The function $x\mapsto u(x) + \frac C 2 |x|^2$ is
convex in $B$.
\end{enumerate}
\end{prop}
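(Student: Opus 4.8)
The plan is to reduce all three conditions to properties of the auxiliary function $v(x)\defin u(x)+\frac{C}{2}\,|x|^2$, which is continuous on $B$ because $u$ is. The computational backbone is the elementary identity
\[
t\,|x|^2+(1-t)\,|y|^2-\big|tx+(1-t)y\big|^2 = t(1-t)\,|x-y|^2,\qquad t\in[0,1],\ x,y\in\R^n,
\]
obtained by expanding the square. Inserting $u=v-\frac{C}{2}|\cdot|^2$ into the left-hand side of the inequality in (b) and using this identity yields
\[
u\big(tx+(1-t)y\big)-tu(x)-(1-t)u(y) = v\big(tx+(1-t)y\big)-tv(x)-(1-t)v(y)+\frac{C}{2}\,t(1-t)\,|x-y|^2 ,
\]
so that (b) is \emph{equivalent} to $v\big(tx+(1-t)y\big)\le tv(x)+(1-t)v(y)$ for all admissible $x,y,t$, i.e.\ to the convexity of $v$ on $B$; and convexity of $v$ is precisely statement (c). Thus (b) $\Longleftrightarrow$ (c) is immediate.

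Next I would treat (a). Given $\mu,\eta$ with $[\mu-\eta,\mu+\eta]\subset B$, put $x=\mu+\eta$ and $y=\mu-\eta$, so that $\tfrac{x+y}{2}=\mu$ and $|x-y|^2=4|\eta|^2$. Substituting $u=v-\frac{C}{2}|\cdot|^2$ and using $\frac{C}{2}|\mu+\eta|^2+\frac{C}{2}|\mu-\eta|^2=C|\mu|^2+C|\eta|^2$, one finds
\[
2u(\mu)-u(\mu+\eta)-u(\mu-\eta) = 2v(\mu)-v(\mu+\eta)-v(\mu-\eta)+C|\eta|^2 ,
\]
hence (a) is equivalent to $v(\mu)\le\tfrac12\big(v(\mu+\eta)+v(\mu-\eta)\big)$, that is, to midpoint convexity of $v$. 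Since convexity of $v$ trivially implies its midpoint convexity, the implications (c) $\Rightarrow$ (b) $\Rightarrow$ (a) are now free, and the whole proposition comes down to the single implication: a continuous midpoint-convex function $v$ on a convex open set $B$ is convex.

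This last implication, though classical, carries the actual content, and I would prove it by the standard dyadic-refinement argument. By induction on $k$ one checks that $v\big(tx+(1-t)y\big)\le tv(x)+(1-t)v(y)$ for every dyadic rational $t=j/2^{k}\in[0,1]$: the case $k=1$ is midpoint convexity, and in the inductive step one bisects the segment $[x,y]$ at its midpoint, applies the midpoint inequality once, and invokes the inductive hypothesis on each half. Because $v$ is continuous and the dyadic rationals are dense in $[0,1]$, this inequality passes to the limit and holds for all $t\in[0,1]$, so $v$ is convex, which is (c). Reassembling the equivalences ``(a) $\Longleftrightarrow$ $v$ midpoint convex'' and ``(b) $\Longleftrightarrow$ (c) $\Longleftrightarrow$ $v$ convex'' with the lemma that a continuous midpoint-convex function is convex, we obtain (a) $\Longleftrightarrow$ (b) $\Longleftrightarrow$ (c). The only real obstacle is this density/continuity passage; everything else is algebraic bookkeeping around the single quadratic identity above.
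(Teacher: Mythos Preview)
The paper does not actually prove this proposition; it is stated with the citation ``(see \cite{semiconcave})'' and no proof is given in the text. So there is no ``paper's own proof'' to compare against.

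Your argument is correct and is essentially the standard one. The quadratic identity $t|x|^2+(1-t)|y|^2-|tx+(1-t)y|^2=t(1-t)|x-y|^2$ does exactly the work you claim: it converts (b) into convexity of $v=u+\tfrac{C}{2}|\cdot|^2$ (giving (b)$\Longleftrightarrow$(c)) and, specialized to $t=\tfrac12$, converts (a) into midpoint convexity of $v$. The remaining implication (a)$\Rightarrow$(c) then reduces to the classical fact that a continuous midpoint-convex function on a convex set is convex, which you handle by the usual dyadic density argument. Your sketch of the inductive step (``bisect, apply midpoint convexity once, use the inductive hypothesis on each half'') is terse but accurate: for $t\in[0,\tfrac12]$ one writes $tx+(1-t)y=2t\cdot\tfrac{x+y}{2}+(1-2t)y$, applies the inductive hypothesis with parameter $2t$, and then uses $v(\tfrac{x+y}{2})\le\tfrac12 v(x)+\tfrac12 v(y)$; the case $t\in[\tfrac12,1]$ is symmetric. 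Continuity of $v$ (inherited from $u$) then passes the inequality from dyadic $t$ to all $t\in[0,1]$.
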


In particular,
by applying the equivalence of (a) and (c), we get that for any $\la\notin\si(T)$ and
any direction $\zeta\in \C$, $|\zeta|=1$, $\Psi_T^{-1}$ possesses the
one-sided directional derivative at $\la$
\[
\label{der1}
\lim_{s\to 0^+} \frac {\Psi_T^{-1}(\la+s\zeta)-\Psi_T^{-1}(\la)}  s \, .
\]
Hence, the same also holds for $\Psi_T$. It also follows that Alexandroff's theorem applies
to functions $\Psi_T^{-1}$ and $\Psi_T$, so that they are twice differentiable
almost everywhere on $\C\sm\si(T)$.
We refer to \cite[Theorem 2.3.1]{semiconcave} for a precise statement.

It might also be worth recalling here that the function $-\log \Psi_T$ is subharmonic on $\C\sm\sigma(T)$.
Some of the above-stated properties that we state here
are true for Banach space operators. However, there is a difference
between the Hilbert space case
and the Banach space case. For instance, the function $\Psi_T$ can be constant on an open set outside
the spectrum for a Banach space operator, but this cannot happen in the Hilbert space case, see
\cite{{sharg2008}, {DavSharg}} and
references therein.

\section{General theorems on convergence}
\label{gen-conv-thms}
Let  $T$ be a bounded operator
on a Hilbert space.
 The finite section method consists in approximating the spectrum
 of $T$ on a Hilbert space $H$ by spectra of the finite matrices
$T_n=P_nTP_n$, where $\{P_n\}$ is a filtration on $H$.
The possibility of doing it has been studied in several articles.
In
\cite{pokrz}, it is shown that in general, there is no convergence of spectra and
it is determined, for which subsets $K$ of $\BC$ there exists a
filtration $\{P_n\}$ such that $d_H(\sigma(T_n), K)\to 0$ as $n\to \infty$, where
$d_H$ denotes the Hausdorff distance.
On the other hand, there are also some positive results assuring the convergence of
$\sigma(T_n)$ to $\sigma(T)$ under some restrictive hypotheses, see
\cite{Elsner, BandtGuv} and references in
\cite{BandtGuv}. Proposition 4.2 in \cite{boc} contains
an abstract result on the partial limit set
of $\eps$-pseudospectra of $T_n$, under certain hypotheses.
The approach related with $C^*$ algebras, originated in the works by Arveson
\cite{AV,ARV} turned out to be very useful, see
the book \cite{HagRochSilb}. In \cite{brown}, this approach was applied to obtain
positive results for the case of quasidiagonal operators.
We also refer to Hansen \cite{han, hansen2011}, B\"ogli \cite{bog2016} and B\"ogli and
Siegl \cite{bog} and references therein for more results on convergence of spectra for bounded
and unbounded operators. In general, the convergence is only assured if either there
is a kind of norm convergence of $T_n$ to $T$ or if $T$ belongs to a subclass of linear
operators and the filtration $\{P_n\}$ is chosen in a special way.

In this section, we will prove that for a quasitriangular operator $T$
and the corresponding filtration $\{P_n\}$,
the injectivity  radius  $j_{T_n}(z)$ converge uniformly to the injectivity radius
$j_{T}(z)$ on $\mathbb C$.
One of the main results
of this section is Theorem \ref{main thm}, which asserts that
for any
$T\in \mathcal B(\mathcal H)$, there
exists a sequence of matrices $\{S_n\}$ such that the functions $\Psi_{S_n}$ converge
uniformly to  $\Psi_T$ on $\C$. This will be done with the use of the following powerful result.

\begin{thmAFV}[the Apostol--Foia\c{s}--Voiculescu theorem, see \cite{{Apost}, {Aposto}, {Apostol}}]
A Hilbert space operator $T$ is quasitriangular if and only if
$\ind(T-\lambda)\ge 0$ whenever $\la\in\C$ and $T-\lambda$ is Semi-Fredholm.
\end{thmAFV}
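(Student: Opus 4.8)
Since this is a deep, much-studied result, my plan would be to prove the necessity direction in full and to outline the strategy of \cite{Apost, Aposto, Apostol} for the sufficiency direction. Two preliminary observations make a localisation possible. First, quasitriangularity is translation invariant: $(I-P_n)(T-\la)P_n=(I-P_n)TP_n$ for every projection $P_n$, so I may test the index condition at a fixed scalar, which I take to be $0$. Second, a finite-rank perturbation preserves quasitriangularity with the \emph{same} filtration, since for finite-rank $F$ (a finite sum of rank-one operators) one has $\|(I-P_n)F\|\to 0$, hence $\|(I-P_n)(T+F)P_n\|\le\|(I-P_n)TP_n\|+\|(I-P_n)F\|\to 0$.

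For necessity I would argue by contradiction. Suppose $T$ is quasitriangular, $T$ is semi-Fredholm at $0$, and $\ind T<0$ (possibly $-\infty$), i.e. $k:=\dim\ker T<\infty$ while $c:=\operatorname{codim}\Ran T$ satisfies $c>k$. Adding a finite-rank partial isometry from $\ker T$ onto a $k$-dimensional subspace of $(\Ran T)^{\perp}$, I may assume (after renaming) that $T$ is bounded below, say $\|Th\|\ge\de\|h\|$ with $\de>0$, with $\Ran T$ closed of codimension $c-k\ge 1$, and still quasitriangular; fix a filtration $\{P_n\}$ with $\eps_n:=\|(I-P_n)TP_n\|\to 0$. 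For $h\in\Ran P_n$ one has $P_nTh=P_nTP_nh$ and $\|P_nTP_nh\|\ge\|Th\|-\eps_n\|h\|\ge(\de-\eps_n)\|h\|$, so once $\eps_n<\de$ the finite-dimensional operator $A_n:=P_nT|_{\Ran P_n}$ is a bijection of $\Ran P_n$ with $\|A_n^{-1}\|\le(\de-\eps_n)^{-1}$. For $y\in\Ran P_n$, putting $x=A_n^{-1}y$ gives $Tx-y=(I-P_n)TP_nx$, hence $\dist(y,\Ran T)\le\eps_n(\de-\eps_n)^{-1}\|y\|$. Since the $\Ran P_n$ are nested, letting $n\to\infty$ shows $\bigcup_n\Ran P_n\subseteq\overline{\Ran T}=\Ran T$; as this union is dense and $\Ran T$ is closed, $\Ran T=\cH$, contradicting $c-k\ge 1$. (The quantitative form of this computation is what underlies the Apostol--Foia\c{s}--Voiculescu formula expressing $\dist(T,\mathcal{QT})$, with $\mathcal{QT}$ the set of quasitriangular operators, through the negative part of the index function of $T$.)

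For sufficiency --- the hard direction --- the plan is to show that, under the hypothesis, $\dist(T,\mathcal{QT})=0$. First I would reduce quasitriangularity to the local statement that for every finite-dimensional $\mathcal F\subseteq\cH$ and every $\eps>0$ there is a finite-dimensional $\mathcal E\supseteq\mathcal F$ with $\|(I-P_{\mathcal E})TP_{\mathcal E}\|<\eps$; the filtration is then assembled by exhausting a countable dense set, with a small perturbation at each step to make the subspaces nested. To produce such $\mathcal E$ I would exploit the structure of the semi-Fredholm domain $\rho_{\mathrm{sF}}(T)$ (the index is constant on each component, and $\ge 0$ by hypothesis): split off, by a Riesz-type decomposition, the spectral pieces of $T$ attached to the bounded components of $\C\sm\si_e(T)$ --- where non-negativity of the index makes each such piece upper-triangular modulo a compact operator --- and treat the remaining essentially normal summand by a Weyl--von Neumann type perturbation that realises it, modulo a small compact correction, as a staircase operator. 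Throughout the inductive enlargement of $\mathcal E$, the condition $\ind\ge 0$ is precisely the bookkeeping that guarantees enough approximate eigenvectors are always available to append so as to keep $(I-P_{\mathcal E})TP_{\mathcal E}$ small; a point $\la$ with $\ind(T-\la)<0$ would force a persistent block of size $|\ind(T-\la)|$ below the diagonal --- the obstruction exhibited in the necessity argument.

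The hard part will be this last perturbation step: that the essentially normal part of $T$ becomes a staircase operator after a small-norm-plus-compact modification. In \cite{Apost, Aposto, Apostol} this is carried out by delicate approximation in the Calkin algebra, in the circle of ideas that also produced the Brown--Douglas--Fillmore theory and Voiculescu's non-commutative Weyl--von Neumann theorem; I would not attempt to reconstruct it and would refer to those papers for the complete proof. The necessity direction, by contrast, is self-contained as sketched above.
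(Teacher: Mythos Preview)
The paper does not prove Theorem~AFV at all: it is quoted as a known result with references to \cite{Apost,Aposto,Apostol} and is used as a black box (in Lemma~\ref{T and T^*} and Theorem~\ref{main thm}). There is therefore no ``paper's proof'' to compare your attempt against.

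That said, your necessity argument is correct and cleanly written. The two reductions (translation invariance; stability of quasitriangularity under finite-rank perturbations via $\|(I-P_n)F\|\to 0$) are valid, and the core step --- that $A_n=P_nT|_{\Ran P_n}$ is invertible with $\|A_n^{-1}\|\le(\de-\eps_n)^{-1}$ once $\eps_n<\de$, whence every $y\in\bigcup_n\Ran P_n$ lies in $\overline{\Ran T}=\Ran T$ --- is exactly the standard obstruction argument, carried out without error. The use of nestedness of the $\Ran P_n$ to pass to the limit for a fixed $y$ is the right bookkeeping.

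For sufficiency you are right to defer. Your outline (local criterion for quasitriangularity, decomposition over components of the semi-Fredholm domain, Weyl--von~Neumann type perturbation for the essentially normal part) is a faithful sketch of the strategy in \cite{Apost,Aposto,Apostol}, and you correctly identify the hard step. Since the paper itself treats the theorem as a citation, your choice to prove the easy direction in full and point to the literature for the hard one is entirely appropriate here.
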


We recall that an operator $T \in \mathcal B(\mathcal H)$
is said to be Semi-Fredholm if $\Ran T$ is closed and at least one
of $\ker T$ and $\ker T^*$ is finite dimensional. The index of a
Semi-Fredholm operator $T \in \mathcal B(\mathcal H)$ is defined
by $\ind(T)=\dim \ker T- \dim \ker T^*$.

The following lemma is an inequality between
the injectivity radius of a quasitriangular operator $T$ and the injectivity radius of $T^*$.
\begin{lem}\label{T and T^*}
Suppose $T \in \mathcal B (\mathcal H)$ is quasitriangular. Then
$j_{T}(\la)\leq j_{T^*}(\bar{\la})$ for any $\la \in \mathbb C$.
\end{lem}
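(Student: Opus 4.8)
**Proof plan for Lemma \ref{T and T^*}.**

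The plan is to split $\C$ according to the Semi-Fredholm behaviour of $T-\la$ and treat each case separately. First I would dispose of the trivial cases: if $j_T(\la)=0$, the inequality is immediate since the right-hand side is $\ge 0$; and if $T-\la$ is invertible, then by Proposition \ref{Muller}(ii) we have $j_T(\la)=k_T(\la)=j_{T^*}(\bar\la)$, so equality holds. The remaining case is when $j_T(\la)>0$ but $T-\la$ is not invertible; here $T-\la$ is bounded below, hence injective with closed range, so $T-\la$ is Semi-Fredholm with $\ker(T-\la)=\{0\}$, and consequently $\ind(T-\la)=-\dim\ker(T-\la)^*\le 0$.

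Now I would invoke the Apostol--Foia\c s--Voiculescu theorem: since $T$ is quasitriangular and $T-\la$ is Semi-Fredholm, we must have $\ind(T-\la)\ge 0$. Combined with the inequality $\ind(T-\la)\le 0$ just obtained, this forces $\ind(T-\la)=0$, and since $\ker(T-\la)=\{0\}$, also $\ker(T-\la)^*=\{0\}$. Thus $T-\la$ is injective with closed range and dense range, i.e.\ $T-\la$ is actually invertible — contradicting our assumption that it is not. Hence this intermediate case is vacuous, and the only possibilities are $j_T(\la)=0$ or $T-\la$ invertible, in both of which $j_T(\la)\le j_{T^*}(\bar\la)$ (with equality in the second).

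The main obstacle, such as it is, is making sure the case analysis is exhaustive and that the boundedness-below hypothesis $j_T(\la)>0$ genuinely yields closed range (this is standard: a bounded-below operator has closed range) so that the Semi-Fredholm machinery applies. One should also be slightly careful that the AFV theorem is being applied to the \emph{same} quasitriangular $T$ at \emph{every} relevant $\la$, which is fine since quasitriangularity is a property of $T$ alone, not of a particular $\la$. Once these points are checked, the argument is a short syllogism: boundedness below gives $\ind\le 0$, quasitriangularity gives $\ind\ge 0$, so $\ind=0$ and $T-\la$ is invertible, reducing everything to Proposition \ref{Muller}.
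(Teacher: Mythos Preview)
Your proof is correct and follows essentially the same route as the paper's: both arguments observe that $j_T(\la)>0$ forces $T-\la$ to be bounded below (hence injective with closed range, so Semi-Fredholm with $\ind\le 0$), then invoke the Apostol--Foia\c{s}--Voiculescu theorem to get $\ind\ge 0$, conclude invertibility, and finish via Proposition~\ref{Muller}. The only cosmetic difference is that the paper packages this as a proof by contradiction from the hypothesis $j_T(\la)>j_{T^*}(\bar\la)$, whereas you organize it as an explicit trichotomy showing the intermediate case is vacuous; the mathematical content is identical.
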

\begin{proof}
Suppose that, to the contrary, $j_{T}(\lambda)> j_{T^*}(\bar{\lambda})$ for some $\lambda \in \mathbb C$.
Then there exists an $\epsilon >0,$ e.g. $ \epsilon=\frac{j_{T}(\lambda)}{2}$, such that $\|(T-\lambda)x\|\geq \epsilon\|x\|$ for all $x\in \mathcal H$. Then $(T-\lambda)$
is one-to-one, $(T-\lambda) \mathcal H$ is
closed and $(T^*-\bar{\lambda})\mathcal H= \mathcal H$. Hence $(T-\lambda)$ is semi-Fredholm. Using Theorem AFV, we conclude that $\ind(T-\lambda)\ge 0,$ which implies
that \mbox{$0=\dim \ker(T-\lambda)\geq \dim \ker(T^*-\bar{\lambda}).$} Hence, $(T^*-\bar{\lambda})$ is one-to-one,
which implies that $(T-\lambda)\mathcal H=\mathcal H$. Hence $(T-\lambda)$ is invertible.
Then by Proposition~\ref{Muller}, $j_{T}(\lambda)= j_{T^*}(\bar{\lambda})$, a contradiction.
\end{proof}

\begin{lem}\label{quasi}
Suppose $T \in \mathcal B (\mathcal H)$ is quasitriangular
and $\{P_n\}$ is a corresponding filtration.
Then $j_{T_n}(\lambda)$
converges pointwise to $j_{T}(\lambda)$, where $T_n=P_n T\big|_{P_n\mathcal H}$.
\end{lem}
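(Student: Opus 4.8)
The plan is to prove the two inequalities $\limsup_n j_{T_n}(\lambda)\le j_T(\lambda)$ and $\liminf_n j_{T_n}(\lambda)\ge j_T(\lambda)$ separately, using different mechanisms. For the first (the easy direction), I would use that $T_n=P_nT\big|_{P_n\mathcal H}$ acts on a subspace on which $T$ is ``almost reduced'': given a unit vector $h\in P_n\mathcal H$ we have $(T_n-\lambda)h=P_n(T-\lambda)h$, so $\|(T_n-\lambda)h\|\le\|(T-\lambda)h\|$, hence $j_{T_n}(\lambda)$ is at least the infimum over a smaller set than the one defining... wait, the inequality goes the other way for the infimum; rather, picking an approximate minimizer $h$ of $\|(T-\lambda)h\|$ over unit vectors in $\mathcal H$, projecting it to $P_n\mathcal H$ and using density of $\bigcup_n P_n\mathcal H$ produces, for large $n$, a nearly-minimizing unit vector in $P_n\mathcal H$ with $\|(T_n-\lambda)h\|$ close to $j_T(\lambda)$, giving $\limsup_n j_{T_n}(\lambda)\le j_T(\lambda)$.

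The substantial direction is $\liminf_n j_{T_n}(\lambda)\ge j_T(\lambda)$, and here the hypothesis of quasitriangularity (and Lemma \ref{T and T^*}) must enter. Fix $\lambda$ and suppose, along a subsequence, there are unit vectors $h_n\in P_n\mathcal H$ with $\|(T_n-\lambda)h_n\|\to\alpha<j_T(\lambda)$. Writing $(T-\lambda)h_n=(T_n-\lambda)h_n+(I-P_n)T P_n h_n$ and invoking $\|(I-P_n)TP_n\|\to 0$ from the definition of quasitriangularity, we get $\|(T-\lambda)h_n\|\to\alpha$ as well; thus $\alpha\ge j_T(\lambda)$ would follow \emph{unless} no such $h_n$ exist, i.e. unless the $h_n$ are ``escaping'' — but they are unit vectors, so the real content is comparing the finite-section injectivity radius with $j_T$. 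The point is: we only need $\liminf j_{T_n}(\lambda)\ge j_T(\lambda)$, and the identity $\|(T-\lambda)h_n\|\le \|(T_n-\lambda)h_n\|+\|(I-P_n)TP_n\|$ gives $j_T(\lambda)\le \|(T-\lambda)h_n\|\le \|(T_n-\lambda)h_n\|+\|(I-P_n)TP_n\|$ whenever $h_n$ is a unit vector in $P_n\mathcal H$; taking the infimum over such $h_n$ and then $n\to\infty$ yields $j_T(\lambda)\le \liminf_n j_{T_n}(\lambda)$ directly.

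So in fact, combining the two halves, $j_{T_n}(\lambda)\to j_T(\lambda)$ follows from the quasitriangularity alone, and Lemma \ref{T and T^*} is not needed for this pointwise statement per se — though I expect the authors invoke it to obtain the parallel statement $j_{T_n^*}(\bar\lambda)\to j_{T^*}(\bar\lambda)$ is \emph{not} in general true (since $T^*$ need not be quasitriangular), which is precisely why $\Psi_{T_n}=\min(j_{T_n}(\lambda),j_{T_n^*}(\bar\lambda))$ requires the extra input that $j_T(\lambda)\le j_{T^*}(\bar\lambda)$ to conclude $\Psi_{T_n}(\lambda)\to\Psi_T(\lambda)=j_T(\lambda)$ in the main theorem. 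The main obstacle, therefore, is not the pointwise convergence here but being careful that the argument uses nothing about $T_n^*$; the estimate $\|(I-P_n)TP_n\|\to0$ must be used in the direction that controls $\|(T-\lambda)h\|$ from above by $\|(T_n-\lambda)h\|$ plus a vanishing error, and symmetrically a nearly-minimizing vector for $T$ can be transplanted into $P_n\mathcal H$ with negligible loss by density of the filtration. I would write it as: (i) $j_T(\lambda)\le\|(T-\lambda)h\|\le\|(T_n-\lambda)h\|+\|(I-P_n)TP_n\|$ for unit $h\in P_n\mathcal H$, infimize and take limits for ``$\ge$''; (ii) for ``$\le$'', pick $h$ with $\|h\|=1$ and $\|(T-\lambda)h\|<j_T(\lambda)+\epsilon$, approximate $h$ by $P_nh/\|P_nh\|$ for $n$ large, and estimate $\|(T_n-\lambda)(P_nh/\|P_nh\|)\|\le\|(T-\lambda)h\|+o(1)$.
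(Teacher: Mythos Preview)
Your proposal is correct and follows essentially the same approach as the paper: the inequality $j_T(\lambda)\le j_{T_n}(\lambda)+\|(I-P_n)TP_n\|$ for the $\liminf$ direction (via the decomposition $(T-\lambda)h=(T_n-\lambda)h+(I-P_n)TP_nh$ for unit $h\in P_n\mathcal H$), and the transplantation of an approximate minimizer via $P_nh/\|P_nh\|$ for the $\limsup$ direction, are exactly the paper's two steps. Your observation that Lemma~\ref{T and T^*} is not needed here is also correct; the paper's proof of this lemma does not invoke it either.
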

\begin{proof}
We have $\|(I-P_n)(T-\lambda)P_n\|\rightarrow 0, $ for any $\lambda \in \mathbb C$. Take any $x\in P_n\mathcal H$
such that $\|x\|=1$.
Then
\beqn
\label{Tn}
\begin{aligned}
\|(T_n-\lambda) x\|=\|P_n(T-\lambda) x\|& \geq
\|(T-\lambda) x\|-\|(I-P_n)(T-\lambda)P_n x\|\\
&\geq j_{T}(\lambda)
-\|(I-P_n)(T-\lambda)P_n\|.
\end{aligned}
\eeqn
We get that $\liminf j_{T_n}(\lambda)\geq j_{T}(\lambda)$.
If we can show that $\limsup j_{T_n}(\lambda)\leq j_{T}(\lambda)$, then we are done.

Take any $\epsilon>0$. Then from definition of $ j_{T}(\lambda)$, we have
$\|(T-\lambda)x\|\leq j_{T}(\lambda)+\eps$, for
some $x\in \mathcal H$ with $\|x\|=1$. Since $P_n \rightarrow I$ strongly, given any $\epsilon>0$, there is a positive integer
$N$ such that  $\|x-P_nx\|< \epsilon$ for all $n\geq N$.
Now, for any $n\ge N$, we have
\begin{align*}
\|(T_n - \lambda )P_nx\|\nonumber &=\|P_n(T-\lambda )P_nx\|\\& \leq \|P_n(T-\lambda )x\|+\|P_n(T-\lambda )(x-P_nx)\|\nonumber
\\ & \leq j_{T}(\lambda)+\eps+ \eps\|T-\lambda \|.
\end{align*}
Since $\|P_nx\|\geq 1-\eps$, we get
$j_{T_n}(\lambda)\leq \frac{j_{T}(\lambda)+\epsilon(1+\| T-\lambda \|)}{1-\epsilon}$
for all $n\ge N$. Hence we have $\limsup j_{T_n}(\lambda)\leq j_{T}(\lambda)$. This completes the proof.
\end{proof}

We will need a known analysis fact, which says that the pointwise convergence of functions
implies the uniform convergence under some extra conditions.
\begin{prop}[see \cite{rudin}, Theorem 7.13]
\label{rudin}
Suppose
$K$ is compact and $\{f_n\}$ is an increasing sequence of continuous
functions on $K$
(so that $f_n\le f_{n+1}$ for all $n$). If
$f_n$ converge pointwise to a continuous function $f$ on $K$,
then this convergence is uniform.
\end{prop}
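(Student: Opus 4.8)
The plan is to run the classical Dini argument. First I would pass to the error functions: set $g_n = f - f_n$. Since $\{f_n\}$ is increasing and converges pointwise to $f$, we have $f_n \le f$ for every $n$, so each $g_n$ is a nonnegative continuous function on $K$; moreover $\{g_n\}$ is nonincreasing and $g_n \to 0$ pointwise. The assertion is equivalent to $\sup_K g_n \to 0$.

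Next I would fix $\eps > 0$ and exploit compactness. For each $x \in K$, pointwise convergence furnishes an index $n_x$ with $0 \le g_{n_x}(x) < \eps$, and continuity of $g_{n_x}$ then provides an open neighbourhood $U_x \ni x$ on which $g_{n_x} < \eps$. The sets $U_x$, $x \in K$, form an open cover of $K$, so there is a finite subcover $U_{x_1}, \dots, U_{x_m}$. Put $N = \max\{n_{x_1}, \dots, n_{x_m}\}$.

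Finally I would use monotonicity to conclude. Given $n \ge N$ and an arbitrary $y \in K$, choose $j$ with $y \in U_{x_j}$. Then $0 \le g_n(y) \le g_{n_{x_j}}(y) < \eps$, where the middle inequality holds because $n \ge N \ge n_{x_j}$ and the sequence $\{g_k\}$ is nonincreasing. Hence $\sup_K g_n \le \eps$ for all $n \ge N$, i.e.\ $f_n \to f$ uniformly on $K$.

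There is no serious obstacle; the one point that must not be glossed over is the role played by each hypothesis. Compactness of $K$ is what converts the a priori $x$-dependent (and possibly unbounded) indices $n_x$ into a single uniform threshold $N$; monotonicity of $\{g_n\}$ is what guarantees that the estimate $g_n \le g_{n_{x_j}}$ persists for all $n \ge n_{x_j}$; and continuity of the limit $f$ (equivalently, of each $g_n$) is what lets us inflate the pointwise bound at $x$ to a bound on a neighbourhood. Dropping any one of the three makes the statement false, so the proof really does need to invoke all of them, and the finite-subcover step is the natural place to combine them.
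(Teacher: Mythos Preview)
Your argument is the standard Dini proof and is correct in every detail. The paper does not supply its own proof of this proposition; it simply quotes the result from Rudin's \emph{Principles of Mathematical Analysis} (Theorem~7.13), whose proof is exactly the monotone-error/finite-subcover argument you have written out, so there is nothing further to compare.
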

Using this proposition, we will prove the following lemma (which extends  \cite[Theorem 3.9]{brown}).
\begin{lem}\label{rhothta}
Let $T \in \mathcal B (\mathcal H)$ and put
$T_n=P_n T \big|_{P_n {\mathcal H}}$, where
$\{P_n\}$ is an arbitrary filtration on $\mathcal H$.
 Then $\rho_{\theta}(T_n)$ converges to $\rho_{\theta}(T)$ uniformly in $\theta\in [0,2\pi]$. \end{lem}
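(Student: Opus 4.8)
The plan is to obtain the uniform convergence from Dini's theorem (Proposition~\ref{rudin}) on the compact interval $[0,2\pi]$. To do so I must check that the functions $\theta\mapsto\rho_\theta(T_n)$ form an increasing sequence of continuous functions on $[0,2\pi]$ converging pointwise to the continuous function $\theta\mapsto\rho_\theta(T)$.

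The first step is monotonicity together with the pointwise upper bound $\rho_\theta(T_n)\le\rho_\theta(T)$. For $h\in P_n\mathcal H$ with $\|h\|=1$ one has $\langle T_n h,h\rangle=\langle P_nTh,h\rangle=\langle Th,P_nh\rangle=\langle Th,h\rangle$; hence $W(T_n)\subseteq W(T)$, and, since $\Ran P_n\subseteq\Ran P_{n+1}$, also $W(T_n)\subseteq W(T_{n+1})$. Passing to support functions (equivalently, reading off the definition of $\rho_\theta$) gives $\rho_\theta(T_n)\le\rho_\theta(T_{n+1})\le\rho_\theta(T)$ for every $\theta$. Continuity --- indeed Lipschitz continuity with constant $\|T_n\|\le\|T\|$, as a supremum of such functions --- of $\theta\mapsto\rho_\theta(T_n)$ was already recorded in Section~\ref{elem-estims}.

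The second step is pointwise convergence. Fix $\theta$ and $\epsilon>0$, and pick a unit vector $h$ with $\Re\langle e^{-i\theta}Th,h\rangle>\rho_\theta(T)-\epsilon/2$. Because $\bigcup_n\Ran P_n$ is dense in $\mathcal H$, there are an index $m$ and a nonzero $g\in\Ran P_m$ for which the unit vector $\tilde g=g/\|g\|$ is arbitrarily close to $h$; since $|\langle T\tilde g,\tilde g\rangle-\langle Th,h\rangle|\le 2\|T\|\,\|\tilde g-h\|$, choosing $\|\tilde g-h\|$ small enough yields $\Re\langle e^{-i\theta}T\tilde g,\tilde g\rangle>\rho_\theta(T)-\epsilon$. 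As $\tilde g\in\Ran P_m\subseteq\Ran P_n$ for all $n\ge m$, this forces $\rho_\theta(T_n)\ge\rho_\theta(T)-\epsilon$ for $n\ge m$; together with the bound from the first step, $\rho_\theta(T_n)\to\rho_\theta(T)$ pointwise.

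Finally, Proposition~\ref{rudin} applies verbatim to the increasing sequence of continuous functions $\theta\mapsto\rho_\theta(T_n)$ on $[0,2\pi]$ with continuous pointwise limit $\theta\mapsto\rho_\theta(T)$, and delivers the asserted uniform convergence. I do not expect a serious obstacle here: the only place needing mild care is the renormalization in the density argument, and the one observation that really matters is the inclusion $W(T_n)\subseteq W(T_{n+1})$, which is precisely what makes Dini's theorem available.
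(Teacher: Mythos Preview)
Your proof is correct and follows essentially the same route as the paper: establish monotonicity $\rho_\theta(T_n)\le\rho_\theta(T_{n+1})\le\rho_\theta(T)$, verify pointwise convergence via a density/approximation argument, and then invoke Dini's theorem (Proposition~\ref{rudin}). The only cosmetic difference is that the paper obtains pointwise convergence by setting $h_n=P_nh$ and passing to the limit, whereas you approximate $h$ by a normalized vector in some $\Ran P_m$; both arguments are equally valid.
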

\begin{proof}
It is easy to see that
$\rho_{\theta}(T_n)\leq \rho_{\theta}(T)$ for all $n$ and that
the sequence
$\{\rho_{\theta}(T_n)\}$ is increasing.
Therefore for any $\theta$, there exists a finite limit $\lim_n \rho_{\theta}(T_n)\le \rho_{\theta}(T)$.
Now, fix some $\theta\in [0,2\pi]$ and
some $\eps>0$. Find $h\in \mathcal{H}$, $\|h\|=1$ such that
$\Re\, \langle e^{-i\theta} T h, h \rangle > \rho_\theta(T)-\eps$.
Put $h_n=P_n h$. Since
\[
\Re\, \langle e^{-i\theta} T h, h \rangle
=\lim \Re\, \langle e^{-i\theta} T h_n, h_n \rangle
=\lim \Re\, \langle e^{-i\theta} T_n h_n, h_n \rangle
\]
and $\|h_n\|\to 1$, we get $\lim_n \rho_{\theta}(T_n)\ge \rho_{\theta}(T)-\eps$.
Hence $\rho_{\theta}(T_n)$ converge pointwise to $\rho_{\theta}(T)$ on $[0,2\pi]$.
By Proposition \ref{rudin}, we conclude that $\rho_{\theta}(T_n)$ converge
uniformly to $\rho_{\theta}(T)$ on $[0,2\pi]$.
\end{proof}
As a consequence of the above lemmas, we will prove the following theorem.

\begin{thm}\label{uniformly pseudo}
Suppose $T \in \mathcal B (\mathcal H)$ is quasitriangular
and $\{P_n\}$ is an associated filtration on $\mathcal H$.
Then $\{j_{T_n}\}$ converges uniformly to $j_{T}$ on $\mathbb C$.
\end{thm}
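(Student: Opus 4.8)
The plan is to prove the uniform convergence separately on a large closed disk $\{|z|\le R\}$ and on its complement $\{|z|> R\}$, because near infinity the function $j_T$ is controlled by an entirely different mechanism (the numerical range, via Proposition~\ref{rhot}) than on compact sets (where Lemma~\ref{quasi} already gives pointwise convergence). Before starting, I would record two reductions that make Proposition~\ref{rhot} applicable to every operator in sight. First, each $T_n$ is a finite square matrix, so $T_n-z$ is injective iff invertible; hence $j_{T_n}=\Psi_{T_n}$ on all of $\C$ by Proposition~\ref{Muller}(ii) (both sides being $0$ on $\si(T_n)$). Second, $T$ is quasitriangular, so Lemma~\ref{T and T^*} together with Lemma~\ref{psi}(1) gives $j_T=\Psi_T$ on all of $\C$. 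I will also use repeatedly that $\|T_n\|\le\|T\|$ and $\rho_\theta(T_n)\le\rho_\theta(T)\le\|T\|$ for all $n$ and all $\theta$.

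For the exterior part, I would fix $R>\|T\|$ and, for $z=re^{i\theta}$ with $r\ge R$, combine the two bounds in \eqref{equ} with \eqref{num_range_mod} to get
\[
0\le j_T(z)-\big(r-\rho_\theta(T)\big)\le\frac{\|T\|^2-\rho_\theta(T)^2}{2(r-\rho_\theta(T))}\le\frac{\|T\|^2}{2(r-\|T\|)}\,,
\]
using $\rho_\theta(T)^2\le\|T\|^2$, and the identical estimate with $T$ replaced by $T_n$ (here $\|T_n\|\le\|T\|$ and $\rho_\theta(T_n)^2\le\|T\|^2$ are used). Subtracting, one obtains for all $|z|=r\ge R$ that
\[
|j_T(z)-j_{T_n}(z)|\le\sup_\theta|\rho_\theta(T)-\rho_\theta(T_n)|+\frac{\|T\|^2}{2(R-\|T\|)}\,.
\]
Given $\eps>0$, enlarging $R$ makes the second term $<\eps/2$, and then Lemma~\ref{rhothta} (uniform convergence $\rho_\theta(T_n)\to\rho_\theta(T)$ in $\theta$) makes the first term $<\eps/2$ for $n$ large. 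This settles $\{|z|\ge R\}$.

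For the interior part, the disk $\{|z|\le R\}$ is compact, the functions $j_{T_n}$ all lie in $\Lip_1(\C)$ by Lemma~\ref{lips1} (so the family is equicontinuous), and $j_{T_n}\to j_T$ pointwise by Lemma~\ref{quasi}; a routine compactness argument — a finite cover by balls of radius $<\eps/3$, equicontinuity on each, and pointwise convergence at the finitely many centres — upgrades this to uniform convergence on $\{|z|\le R\}$. Combining the two regions finishes the proof.

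The genuinely substantive input has already been supplied: Lemma~\ref{quasi} is where quasitriangularity, and through it the Apostol--Foia\c{s}--Voiculescu theorem, does the real work, and it yields the pointwise statement. The one point that truly needs the machinery of this section, and the main obstacle, is the control at infinity: equicontinuity is useless on the unbounded set $\C$, and what rescues the argument is that $j_T(re^{i\theta})$ is pinned to the explicit profile $r-\rho_\theta(T)$ with an error $O(1/r)$ that is uniform in $\theta$, combined with the uniform-in-$\theta$ convergence of $\rho_\theta(T_n)$ from Lemma~\ref{rhothta}.
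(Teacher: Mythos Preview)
Your proof is correct and follows essentially the same route as the paper's: split $\C$ into a compact disk (handled by Lemma~\ref{quasi}, the $\Lip_1$ estimate, and a finite-net $3\eps$ argument) and its complement (handled by the numerical-range asymptotics of Proposition~\ref{rhot} together with Lemma~\ref{rhothta}). Your explicit reduction $j_{T_n}=\Psi_{T_n}$ and $j_T=\Psi_T$ is a careful touch the paper leaves implicit; note only the harmless slip that the exterior bound should carry $\frac{\|T\|^2}{R-\|T\|}$ rather than $\frac{\|T\|^2}{2(R-\|T\|)}$, since two error terms are being added.
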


\begin{proof}
Fix any $R>0$; first we check the uniform convergence on
the closed ball $B_{R}(0)=\{z:|z|\leq R\}$.
To this end, take some $\eps>0$. By compactness, $B_{R}(0)$ has
a finite $\epsilon$-net $\{\lambda_k: 1\le k\le m\}$, so that
$B_{R}(z)\subset \bigcup_{k=1}^{m} B_\eps (\lambda_k)$.

By Lemma \ref{quasi}, there exists
an integer $N$ such that $|j_{T_n}(\lambda_k)-j_{T}(\lambda_k)|<\epsilon$ for all $k$
and all $n \geq N$. Since $\{j_{T_n}\}$ and $j_{T}$ are $\Lip_1(\C)$ functions,
we can now apply a standard $3\eps$ argument.
Namely, let $\lambda$ be any point in $B_{R}(0)$. Then $\la\in B_\eps (\lambda_k)$ for some $k$, and
we get
\[
|j_{T_n}(\lambda)-j_{T}(\lambda)|\leq
\big|
j_{T_n}(\lambda)-j_{T_n}(\lambda_k)\big|
+
\big|j_{T_n}(\lambda_k)-j_{T}(\lambda_k)\big|
+
\big|j_{T}(\lambda_k)-j_{T}(\lambda)\big|
<3\epsilon
\]
for all $n \geq N$. This implies the uniform convergence
on $B_{R}(0)$.

Now we prove the uniform convergence on the whole complex plane.
Once again, fix some $\eps>0$.  Put $R=R(\eps)=\|T\|^2/(2\eps+\|T\|)$.
Since $|\rho_\theta(T)|\le \|T\|$, it follows from~\eqref{equ}, \eqref{num_range_mod}
that for any $z=re^{i\theta}$
with $|z| > R$, one has
\[
\big|j_T(z)-|z|+\rho_\theta(T)\big| \le \frac {\|T\|^2}{2(R-\|T\|)} = \eps.
\]
We get in the same way that
$\big|j_{T_n}(z)-|z|+\rho_\theta(T_n)\big|< \eps$
for all $n$ and all $z, |z|> R$.
Also, by Lemma~\ref{rhothta},
$\rho_{\theta}(T_n)$ converges uniformly to $\rho_{\theta}(T)$ on $[0,2\pi]$ as $n\to \infty$, that is,
there exists a positive integer $N_0$ such that $|\rho_{\theta}(T_n)-\rho_{\theta}(T)|<\epsilon$ for all $n\geq N_0$
and all $\theta$.
This implies that for all $z=re^{i\theta}$ with $|z|>R$ and all $n\geq N_0$, we have
\[
|j_{T_n}(z)-j_{T}(z)|  \leq
\big|j_{T}(z)-|z|+\rho_{\theta}(T)\big|+
\big|j_{T_n}(z)-|z|+\rho_{\theta}(T_n)\big|+
|\rho_{\theta}(T_n)-\rho_{\theta}(T)| <3\epsilon.
\]
Now choose $N_1$ so that $|j_{T_n}-j_T|<3\eps$ on $B_{R(\eps)}( 0)$
for all $n\ge N_1$. Then $|j_{T_n}(z)-j_T(z)|<3\eps$ for all $z\in \C$ whenever
$n\ge \max(N_0,N_1)$. This proves that $j_{T_n}$ converges uniformly to $j_{T}$ on $\mathbb C$.
\end{proof}

\begin{cor}
Let $T$ be an operator on $\mathcal{H}$ such that
either $T$ or $T^*$ is quasitriangular.
Let $\{P_n\}$ be a filtration on $\mathcal{H}$ that is associated to $T$ in the first case and
is associated to $T^*$ in the second case.
Then $\{\Psi_{T_n}\}$ converge uniformly to $\Psi_{T}$ on $\C$, where
$T_n=P_n T_{\mid P_n\mathcal{H}}$.
\end{cor}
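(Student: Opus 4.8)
The plan is to reduce everything to the statement about injectivity radii already proved in Theorem~\ref{uniformly pseudo}, using two elementary observations. First I would record the conjugation symmetry of $\Psi$: for any $S\in\mathcal B(\mathcal H)$ and any $w\in\C$ one has $\Psi_{S^*}(\bar w)=\Psi_S(w)$. Indeed, by Lemma~\ref{psi}(1), $\Psi_{S^*}(\bar w)=\min\bigl(j_{S^*}(\bar w),\,j_{S}(w)\bigr)=\min\bigl(j_{S}(w),\,j_{S^*}(\bar w)\bigr)=\Psi_{S}(w)$; alternatively this follows from $\|(S^*-\bar w)^{-1}\|=\|(S-w)^{-1}\|$ together with $\sigma(S^*)=\overline{\sigma(S)}$. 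Second, I would note that for a \emph{finite matrix} $M$ one has $\Psi_M(z)=j_M(z)$ for every $z\in\C$: if $j_M(z)=0$ then $M-z$ is not injective, hence (by finite-dimensionality) not invertible, so $\Psi_M(z)=0$; and if $j_M(z)>0$ then $M-z$ is injective, hence invertible, and Proposition~\ref{Muller}(ii) gives $j_M(z)=\Psi_M(z)$.

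Next I would treat the case in which $T$ itself is quasitriangular and $\{P_n\}$ is an associated filtration. By Lemma~\ref{T and T^*}, $j_T(\lambda)\le j_{T^*}(\bar\lambda)$ for all $\lambda\in\C$, so Lemma~\ref{psi}(1) yields $\Psi_T=j_T$ on $\C$. Each finite section $T_n=P_nT\big|_{P_n\mathcal H}$ is a finite matrix, so by the second observation above $\Psi_{T_n}=j_{T_n}$ on $\C$. Theorem~\ref{uniformly pseudo} asserts precisely that $j_{T_n}$ converges uniformly to $j_T$ on $\C$, whence $\Psi_{T_n}\to\Psi_T$ uniformly on $\C$ in this case.

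For the remaining case, in which $T^*$ is quasitriangular and $\{P_n\}$ is associated to $T^*$, I would apply the case just proved to the operator $T^*$ and the filtration $\{P_n\}$, obtaining that $\Psi_{(T^*)_n}$ converges uniformly to $\Psi_{T^*}$ on $\C$, where $(T^*)_n=P_nT^*\big|_{P_n\mathcal H}$. The bookkeeping identity $(T^*)_n=(T_n)^*$ holds because, for $x,y\in P_n\mathcal H$, $\langle P_nTx,y\rangle=\langle Tx,y\rangle=\langle x,T^*y\rangle=\langle x,P_nT^*y\rangle$, so the adjoint on $P_n\mathcal H$ of the compression of $T$ is the compression of $T^*$. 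Then the conjugation symmetry gives $\Psi_{(T_n)^*}(w)=\Psi_{T_n}(\bar w)$ and $\Psi_{T^*}(w)=\Psi_T(\bar w)$ for all $w\in\C$; since $w\mapsto\bar w$ is an isometry of $\C$, uniform convergence on $\C$ is preserved under this substitution, and therefore $\Psi_{T_n}\to\Psi_T$ uniformly on $\C$ here as well.

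I do not anticipate a serious obstacle. The only points that require a little care are the identification $(T_n)^*=(T^*)_n$ (same filtration, adjoint operator) and the remark that $\Psi_M=j_M$ for finite matrices; together these are exactly what allow Theorem~\ref{uniformly pseudo} to be invoked verbatim in both cases.
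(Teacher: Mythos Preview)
Your proof is correct and follows essentially the same route as the paper: use that $\Psi_{T_n}=j_{T_n}=j_{T_n^*}(\bar\cdot)$ on finite sections, that $\Psi_T=j_T$ when $T$ is quasitriangular (via Lemma~\ref{T and T^*} and Lemma~\ref{psi}), and reduce the $T^*$-quasitriangular case to the first via the identity $(T^*)_n=(T_n)^*$ and conjugation symmetry, then invoke Theorem~\ref{uniformly pseudo}. Your write-up is simply more explicit about the bookkeeping (the finite-matrix fact $\Psi_M=j_M$ and the adjoint-of-compression identity) than the paper's two-line argument.
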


Indeed, notice first that
$\Psi_{T_n}(z)=j_{T_n}(z)=j_{T_n^*}(\bar z)$ for all $n$ and all $z$.
Next, $\Psi_{T}=j_T$ if $T$ is quasitriangular and $\Psi_{T}(z)=j_{T^*}(\bar z)$ if
$T^*$ is quasitriangular. So both cases follow from Theorem~\ref{uniformly pseudo}.

It follows that, under the above hypotheses on $T$, given any positive numbers
$\eps_1<\eps<\eps_2$, one gets that
\[
\si_{\eps_1}(T_n)\subset
\si_{\eps}(T)\subset
\si_{\eps_2}(T_n)
\]
for all sufficiently large $n$; in this sense, the pseudospectra
$\si_{\eps}(T)$ can be calculated with an arbitrary precision. Of course,
it would be desirable to
have estimates of the rate of the uniform convergence of
$\Psi_{T_n}$ to $\Psi_{T}$ in some concrete terms.

The finite section method and convergence of pseudospectra for band-dominated
operators is considered in \cite[Chap. 6]{RabRochSilb} for the $\ell^2$ case and in
\cite{SeidSilb} for the case of $\ell^p$. We also refer to \cite{BenArtzi} for a discussion of spectral
approximation for finite band selfadjoint operators.
Herrero introduced several extensions of the notion of quasitriangularity (see \cite{her}),
and it would be interesting to know whether there is kind of extension of Theorem~\ref{uniformly pseudo}
for these classes.

The proof of the following proposition is very easy and we leave details to the reader.
\begin{prop}
Let $T_1, T_2 \in \mathcal B (\mathcal H)$. Then $j_{T_1\oplus T_2}(z) = \min\{j_{T_1}(z),j_{T_2}(z)\}$.
\end{prop}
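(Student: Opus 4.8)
The plan is to work directly from the defining formula $j_T(z)=\inf\{\|(T-z)h\|:\|h\|=1\}$. Write $\mathcal H=\mathcal H_1\oplus\mathcal H_2$ on which $T=T_1\oplus T_2$ acts, and let $z\in\C$ be fixed. For a unit vector $h=h_1\oplus h_2$ (so $\|h_1\|^2+\|h_2\|^2=1$) one has $(T-z)h=(T_1-z)h_1\oplus(T_2-z)h_2$, and since the sum is orthogonal,
\[
\|(T-z)h\|^2=\|(T_1-z)h_1\|^2+\|(T_2-z)h_2\|^2
\ge j_{T_1}(z)^2\|h_1\|^2+j_{T_2}(z)^2\|h_2\|^2,
\]
using $\|(T_i-z)h_i\|\ge j_{T_i}(z)\|h_i\|$ for $i=1,2$. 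The right-hand side is at least $\min\{j_{T_1}(z),j_{T_2}(z)\}^2\big(\|h_1\|^2+\|h_2\|^2\big)=\min\{j_{T_1}(z),j_{T_2}(z)\}^2$. Taking the infimum over $h$ gives $j_{T_1\oplus T_2}(z)\ge\min\{j_{T_1}(z),j_{T_2}(z)\}$.

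For the reverse inequality, assume without loss of generality that $j_{T_1}(z)\le j_{T_2}(z)$. Given $\eps>0$, choose a unit vector $h_1\in\mathcal H_1$ with $\|(T_1-z)h_1\|<j_{T_1}(z)+\eps$. Then $h=h_1\oplus 0$ is a unit vector in $\mathcal H_1\oplus\mathcal H_2$ with $\|(T-z)h\|=\|(T_1-z)h_1\|<j_{T_1}(z)+\eps$, so $j_{T_1\oplus T_2}(z)\le j_{T_1}(z)+\eps$. Letting $\eps\to 0$ yields $j_{T_1\oplus T_2}(z)\le j_{T_1}(z)=\min\{j_{T_1}(z),j_{T_2}(z)\}$, which completes the argument.

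There is really no obstacle here: the only point to be a little careful about is to invoke orthogonality of the direct-sum decomposition when expanding $\|(T-z)h\|^2$, and to note that the extremal vectors in the lower bound can be chosen supported on a single summand. The same computation extends verbatim to any finite direct sum $T_1\oplus\cdots\oplus T_k$, giving $j_{T_1\oplus\cdots\oplus T_k}(z)=\min_{1\le i\le k} j_{T_i}(z)$; combined with Lemma~\ref{psi}, one also obtains $\Psi_{T_1\oplus\cdots\oplus T_k}(z)=\min_i\Psi_{T_i}(z)$ by applying the identity both to $T_1\oplus\cdots\oplus T_k$ and to its adjoint.
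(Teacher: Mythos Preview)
Your proof is correct and is precisely the routine verification the paper has in mind; the paper omits the argument entirely with the remark that it ``is very easy and we leave details to the reader,'' and your direct computation from the definition of $j_T$ is the natural way to fill that in. The additional observation about finite direct sums and the consequence $\Psi_{T_1\oplus\cdots\oplus T_k}=\min_i\Psi_{T_i}$ via Lemma~\ref{psi} is also correct and useful.
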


The following
is one of our main results.

\begin{thm}\label{main thm}
For any bounded linear operator $T$ on a Hilbert space $\mathcal H$, there exists
a sequence $\{T_n\} $ of finite matrices
such that $\Psi_{T_n}$ converges uniformly to $\Psi_{T}$ on $\mathbb C$. \end{thm}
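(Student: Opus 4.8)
The plan is to reduce the statement to the case already covered by the Corollary following Theorem~\ref{uniformly pseudo}: if $\widetilde T$ or $\widetilde T^{*}$ is quasitriangular and $\{P_n\}$ is a corresponding filtration, then the finite sections $\widetilde T_n$ satisfy $\Psi_{\widetilde T_n}\to\Psi_{\widetilde T}$ uniformly on $\C$. So it is enough to build, on a separable Hilbert space, an operator $\widetilde T$ which is quasitriangular and has $\Psi_{\widetilde T}=\Psi_{T}$ on all of $\C$. I would take $\widetilde T=T\oplus N$ with a suitably chosen normal summand $N$.

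First I record the identity $\Psi_{A\oplus B}=\min(\Psi_A,\Psi_B)$ on $\C$; it follows at once from $\sigma(A\oplus B)=\sigma(A)\cup\sigma(B)$ and $\|(A\oplus B-z)^{-1}\|=\max\big(\|(A-z)^{-1}\|,\|(B-z)^{-1}\|\big)$ on the common resolvent set, or alternatively from Lemma~\ref{psi}(1) combined with $j_{T_1\oplus T_2}=\min(j_{T_1},j_{T_2})$. Next, choose a countable dense subset $\{d_k\}_{k\in\N}$ of the compact set $\sigma(T)$ and set $N=\bigoplus_k d_k I_{\mathcal H_k}$ with each $\mathcal H_k$ separable and infinite-dimensional. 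Then $N$ is normal, $\sigma(N)=\overline{\{d_k\}}=\sigma(T)$, and each $d_k$ is an eigenvalue of infinite multiplicity, so $\sigma_{\mathrm{ess}}(N)=\sigma(N)=\sigma(T)$. Since $N$ is normal, $\Psi_N(z)=\dist(z,\sigma(N))=\dist(z,\sigma(T))\ge\Psi_T(z)$ for every $z$ (using the standard bound $\Psi_T(z)\le\dist(z,\sigma(T))$), and therefore $\Psi_{T\oplus N}=\min(\Psi_T,\Psi_N)=\Psi_T$ on all of $\C$.

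It remains to see that $T\oplus N$ is quasitriangular. If $\lambda\notin\sigma(T)$ then $T-\lambda$ and $N-\lambda$ are both invertible, so $T\oplus N-\lambda$ is invertible. If $\lambda\in\sigma(T)=\sigma_{\mathrm{ess}}(N)$, then $N-\lambda$ is not semi-Fredholm (for a normal operator the semi-Fredholm points are precisely those off the essential spectrum); and a direct sum $A\oplus B$ can be semi-Fredholm only if $B$ is, since the range of the sum is the product of the ranges and the kernels (resp. cokernels) of the sum are the products of those of the summands. Hence $T\oplus N-\lambda$ is not semi-Fredholm for such $\lambda$. Consequently the semi-Fredholm domain of $T\oplus N$ coincides with its resolvent set, where the index vanishes; in particular $\ind(T\oplus N-\lambda)\ge0$ at every semi-Fredholm point, so $T\oplus N$ is quasitriangular by the Apostol--Foia\c{s}--Voiculescu theorem.

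To conclude, fix a filtration $\{P_n\}$ corresponding to the quasitriangular operator $T\oplus N$ and put $T_n=P_n(T\oplus N)_{\mid P_n\mathcal K}$, where $\mathcal K$ denotes the space on which $T\oplus N$ acts; the $T_n$ are finite matrices, and the Corollary following Theorem~\ref{uniformly pseudo} gives $\Psi_{T_n}\to\Psi_{T\oplus N}=\Psi_T$ uniformly on $\C$. I expect the one nontrivial ingredient to be the hard direction of the Apostol--Foia\c{s}--Voiculescu theorem, which is used as a black box; the only point in the argument itself that requires care is checking that the normal summand $N$, chosen with $\sigma_{\mathrm{ess}}(N)=\sigma(T)$, destroys all semi-Fredholm points inside $\sigma(T)$ while leaving $\Psi_T$ untouched — the rest is routine.
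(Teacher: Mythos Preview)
Your proof is correct and follows essentially the same strategy as the paper: pass to $T\oplus N$ for a suitable normal $N$, verify quasitriangularity via the Apostol--Foia\c{s}--Voiculescu theorem, and apply the finite-section convergence result. The paper chooses $N$ with eigenvalues dense only in the non-isolated part $K=\sigma(T)\setminus J$, which then forces a separate argument at isolated spectral points using stability of the Fredholm index; your choice of $N$ with $\sigma_{\mathrm{ess}}(N)=\sigma(T)$ (infinite-multiplicity eigenvalues dense in all of $\sigma(T)$) kills every semi-Fredholm point in $\sigma(T)$ at once and so avoids that case split --- a minor but genuine streamlining of the same idea.
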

\begin{proof}
Suppose $T \in \mathcal B (\mathcal H)$ and let $J$ be the set of all isolated points of $\sigma(T)$.
Set $K=\sigma(T)\setminus J$. Then $K$ is compact. Let $N$ be any normal operator
on $\mathcal H$ with discrete
spectrum, whose eigenvalues are contained in $K$ and are dense there.

Set $S=T\oplus N$. Then it is very easy to see that
$\sigma(T)=\sigma(S)$.
First we will show that $S$ is quasitriangular.

By Theorem AFV, we have to take an arbitrary point
$\la\in \sigma(S)$
and to show that either $S-\la$ is not Semi-Fredholm or $\ind(S-\la)\ge 0$. To do it, consider
two cases.

\medskip

\noindent{\textbf{Case 1:}} Suppose $\lambda \in K$. Then $\Ran(N-\lambda)$ is not closed.
This implies that $\Ran(S-\lambda )$ is not closed. Hence $S-\lambda $ is not semi-Fredholm.

\smallskip

\noindent{\textbf{Case 2:}} Suppose $\lambda \in J$ and $S-\lambda $ is semi-Fredholm.
There are points $\mu\notin \si(T)$ arbitrarily close to $\lambda$. By stability of the
Fredholm index, we get
$\ind(S-\lambda )=\ind(S-\mu )=0$.

\medskip

Hence we conclude that $S$ is quasitriangular.

Let $\{P_n\}$ be the corresponding filtration on $\mathcal{H}\oplus \mathcal{H}$, so that
$\lim_{n \rightarrow \infty}\|(I-P_n)SP_n\|=0$.
Set $S_n=P_n S \big|_{\mathcal{H}_n}$, where $\mathcal{H}_n=P_n \mathcal{H}$. By applying
Theorem \ref{uniformly pseudo}
and Lemma \ref{psi}, we get  that $\Psi_{S_n}$ converges uniformly to $\Psi_{S}$ on $\mathbb C$.
For any $\la \notin \si(T)$,
$\|(N-\lambda)^{-1}\|= 1/\operatorname{dist}(\lambda, K)\le  \|(T-\lambda)^{-1}\|$,
and therefore
\[
\Psi_{S}(\lambda)^{-1}
=\max
\big\{\|(T-\lambda)^{-1}\|,
\|(N-\lambda)^{-1}\|\big\}
=\|(T-\lambda)^{-1}\|
=\Psi_{T}(\lambda)^{-1}.
\]

Hence we conclude that $\Psi_{S}(\lambda)=\Psi_{T}(\lambda)$
for all $\la\in\C$, which completes the proof.
\end{proof}

\

Notice that for a concrete operator $T$, the above construction
requires the knowledge of the spectrum of $T$, which is
computationally difficult and requires, in general, three passages
to limits (see the work \cite{BenArtziHansen_etl} and its full
version in arxiv, where the smallest number of limit procedures necessary to
solve a computational problem is studied in a systematic way).

Even if the operator $N$ in the last proof is known, it does not
seem so easy to construct the corresponding filtration $\{P_n\}$
on $\mathcal{H}\oplus \mathcal{H}$ explicitly. Therefore, to the
opposite to Theorem~\ref{uniformly pseudo}, the above proof of the
last theorem is not constructive, and a more explicit construction
would be desirable.

\begin{rem} It is well known that pseudospectra varies continuously with an operator $T$
in $\mathcal B(\mathcal H)$. Hansen in his fundamental paper \cite{hansen2011}
introduced the notion of  $(N, \eps)$-pseudospectra
defined by means of a modified function $ \Psi_{T,N}$, which has all the nice continuity property that the
function $\Psi_{T}$ has, but also allow one to approximate the spectrum arbitrarily well for large $N$.
Later,  M. Seidel \cite{seid2012} extended the concept of $(N, \eps)$-pseudospectra of Hansen to the case
of bounded linear
operators on Banach spaces and proved several relations to the usual spectrum.
\end{rem}

We refer to
Part~3 of the book~\cite{PourRichardsBook}, to recent works
\cite{CubittPGarciaWolf, DerevEtl2016, MarlNaboko}
and references therein
for diverse negative and positive results on
computability of spectra and on the rate of
convergence of approximations.

\section{A theorem about shapes of pseudospectra}
\label{sect-shapes}

Let $\Om$ be a bounded domain in $\C$. We put
$\ovr \Omega = \{\bar{w}\in \mathbb C: w\in \Omega\}$.
Let $H^2(\ovr\Om)$ stand for
the Hardy space on $\ovr\Om$. We define the subnormal
operator $M(\ovr\Om)$ of multiplication by
$z$ on $H^2(\ovr\Om)$ by
\[
M(\ovr\Om)f(z)=zf(z), \quad f\in H^2(\ovr\Om).
\]

If $ G\supset\clos \Omega$, then we write it as $G\Supset \Omega$.

\begin{thm}\label{nilpotent}
Suppose $G_0\Supset G_1 \Supset \ldots \Supset G_ m$
are bounded connected domains in $\C$, containing the origin,
and $\epsilon_1$ is any number such that
$\eps_1 <\dist(\partial G_0, \partial G_1)$.
Then there
exist $\epsilon_2, \epsilon_3,\ldots, \epsilon_m$ and a
square nilpotent complex matrix $T$ such that $ \epsilon_1 > \epsilon_2 > \ldots > \epsilon_m>0$ and
\beqn
\label{ps-inclusn}
G_0 \Supset\sigma_{\epsilon_1}(T)\Supset G_1\Supset \ldots
\Supset G_{m-1} \Supset \sigma_{\epsilon_m}(T)\Supset G_m.
\eeqn
\end{thm}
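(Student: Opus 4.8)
The plan is to realize $T$ as a finite section of a quasitriangular operator assembled from Hardy-space model operators, using the uniform-convergence results of Section~\ref{gen-conv-thms} to pass between the infinite and the finite pictures.

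The building block is $B_\Omega:=M(\overline\Omega)^*$ for a bounded domain $\Omega\ni 0$. It lies in the Cowen--Douglas class $B_1(\Omega)$: for $\lambda\in\Omega$ it is onto with kernel spanned by the reproducing kernel $K_{\bar\lambda}$ of $H^2(\overline\Omega)$, on $\partial\Omega$ it fails to be semi-Fredholm, and off $\clos\Omega$ it is invertible; hence $\ind(B_\Omega-\lambda)\in\{0,1\}$ throughout, so by Theorem AFV $B_\Omega$, and any direct sum of such operators, is quasitriangular. Expanding the holomorphic null section at the origin, $(B_\Omega-\lambda)\gamma(\lambda)=0$ with $\gamma(\lambda)=\sum_j\gamma_j\lambda^j$, one finds $B_\Omega\gamma_0=0$ and $B_\Omega\gamma_j=\gamma_{j-1}$; applying Gram--Schmidt to $\{\gamma_j\}$ produces an orthonormal basis in which $B_\Omega$ is strictly lower-triangular, so all of its finite sections $(B_\Omega)_N$ are nilpotent matrices, and, by the corollary to Theorem~\ref{uniformly pseudo}, $\Psi_{(B_\Omega)_N}\to\Psi_{B_\Omega}$ uniformly on $\C$ as $N\to\infty$. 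Two further facts make shapes controllable. First, the resolvent of $B_\Omega$ is multiplication by $(z-\bar w)^{-1}$ on $H^2(\overline\Omega)$, whose multiplier norm equals its supremum norm, so $\Psi_{B_\Omega}(z)=\dist(z,\clos\Omega)$ and the $\epsilon$-pseudospectra of $B_\Omega$ are exactly the $\epsilon$-neighbourhoods of $\clos\Omega$. Second, for a fixed level $N$ the resolvent of $(B_\Omega)_N$ has a terminating Neumann series, from which $\Psi_{(B_\Omega)_N}(z)$ behaves like a fixed power $\asymp|z|^N$ near the origin; thus $\sigma_\epsilon\bigl((B_\Omega)_N\bigr)$ contracts toward $\{0\}$ as $\epsilon\to 0$ (at rate $\epsilon^{1/N}$), whereas for the range of $\epsilon$ in which $\Psi_{(B_\Omega)_N}$ is already close to $\dist(\cdot,\clos\Omega)$ it looks like a neighbourhood of $\clos\Omega$. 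Finally, replacing $\Omega$ by $c\Omega$ replaces $B_\Omega$ by $cB_\Omega$ and multiplies $\Psi$ and all $\epsilon$-pseudospectra by $c$.

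Now the construction: choose domains $\Omega_1,\dots,\Omega_m$ containing $0$, with $\clos\Omega_k$ imitating the shape of $G_k$ at a suitable scale, put $A=\bigoplus_{k=1}^m B_{\Omega_k}$ and $T=A_N$ for a level $N$ to be chosen. Since $j_{T_1\oplus T_2}=\min(j_{T_1},j_{T_2})$ and likewise for adjoints, $\Psi_{A_N}=\min_k\Psi_{(B_{\Omega_k})_N}$, so $\sigma_\epsilon(T)=\bigcup_k\sigma_\epsilon\bigl((B_{\Omega_k})_N\bigr)$. Taking $\epsilon_1$ to be the prescribed number and then choosing $\epsilon_2>\cdots>\epsilon_m>0$ decreasing fast enough, I would arrange that at the threshold $\epsilon_k$ it is the $k$-th summand that produces an $\epsilon_k$-pseudospectrum containing $\clos G_k$ and contained in $G_{k-1}$; that the summands with $j>k$, being much smaller in scale, contribute only pseudospectra deep inside $G_k$; and that the summands with $j<k$, whose limiting spectra $\clos\Omega_j$ are larger than $G_{k-1}$, nevertheless have their $\epsilon_k$-pseudospectra contracted back inside $G_{k-1}$ because $\epsilon_k$ is tiny and the contraction rate is $\epsilon_k^{1/N}$. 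Then $T$ and these $\epsilon_k$ satisfy \eqref{ps-inclusn}, the strict inclusions being exactly what the separation $\epsilon_1<\dist(\partial G_0,\partial G_1)$, together with the analogous separations produced at the inner scales by our freedom to choose $\epsilon_2,\dots,\epsilon_m$, make possible.

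The main obstacle is precisely this last step: one finite matrix must control all $m$ pseudospectra simultaneously, so the $\Omega_k$, the level $N$, and the thresholds $\epsilon_k$ have to be chosen jointly so that no summand spoils another's inclusion. The tension is that making $\sigma_{\epsilon_k}\bigl((B_{\Omega_k})_N\bigr)$ large and correctly shaped (so as to contain and imitate $\clos G_k$) wants $N$ and $\epsilon_k$ not too small, while keeping the larger summands $j<k$ from protruding past $G_{k-1}$ wants $\epsilon_k$ very small; these must be reconciled scale by scale, which is why only $\epsilon_1$ is given and $\epsilon_2,\dots,\epsilon_m$ are produced inductively, each small enough to absorb the sizes of the already fixed larger blocks. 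A subsidiary point, needed for $G_k$ of arbitrary shape rather than disks, is to check that one can choose $\Omega_k$ whose closure approximates $\partial G_k$ closely while still lying strictly between $\clos G_{k+1}$ and $G_{k-1}$, and that the finite section $(B_{\Omega_k})_N$ indeed reproduces that shape in its $\epsilon_k$-pseudospectrum.
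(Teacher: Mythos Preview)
Your plan is on the right track—the paper also builds $T$ as a direct sum of finite sections of the operators $M(\overline\Omega_j)^*$—but it contains a structural gap that you yourself flag as ``the main obstacle'' without resolving. The difficulty is the single truncation level $N$.

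With one common $N$, every summand $(B_{\Omega_j})_N$ is nilpotent of the \emph{same} order, so all of them share the contraction asymptotic $\sigma_\epsilon\sim\epsilon^{1/N}$ that you invoke. But then you cannot, at the same threshold $\epsilon_k$, make $\sigma_{\epsilon_k}\bigl((B_{\Omega_k})_N\bigr)$ large enough to contain $G_k$ while simultaneously making $\sigma_{\epsilon_k}\bigl((B_{\Omega_j})_N\bigr)$ (for $j<k$) small enough to fit inside $G_{k-1}$: if $\epsilon_k$ is tiny, both pseudospectra are tiny near $0$; if $\epsilon_k$ lies in the range where the uniform convergence $\Psi_{(B_{\Omega_k})_N}\to\dist(\cdot,\Omega_k)$ is already effective, that same convergence forces $\sigma_{\epsilon_k}\bigl((B_{\Omega_j})_N\bigr)$ to fill out a neighbourhood of $\Omega_j\supset G_{k-1}$. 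No rescaling of the $\Omega_j$ breaks this symmetry, because the transition between the two regimes is governed by $N$, which you have taken equal for all summands.

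The paper resolves the tension by allowing a \emph{different} level $N_j$ for each block and choosing them in the order $\epsilon_1,\,N_1,\,\epsilon_2,\,N_2,\,\dots$\,. At step $k$ one first picks $\epsilon_k$ small enough that the already-fixed finite matrices $T_j(N_j)$, $j<k$, have resolvent bounded by $\epsilon_k^{-1}$ on $\C\setminus G_{k-1}$ (a finite condition, since each $T_j(N_j)$ has spectrum $\{0\}$); only afterwards is $N_k$ chosen, large enough to force $\Psi_{T_k(N_k)}\le\epsilon_k/2$ on $\Omega_k$. The paper also places $\Omega_j$ strictly between $G_{j-1}$ and $G_j$ (rather than ``imitating $G_j$ at a suitable scale''), which, together with the subnormality bound
\[
\|(T_j(N_j)-z)^{-1}\|\le\|(M(\overline\Omega_j)^*-z)^{-1}\|=\dist(z,\Omega_j)^{-1},
\]
handles the summands with $j\ge k$ uniformly, with no dependence on $N_j$ at all. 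This decoupling of ``choose $\epsilon_k$'' from ``choose $N_k$'' is precisely what a single-$N$ scheme cannot provide.
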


\begin{proof}
Take any finitely connected domains $\Om_j$, $j=1, \dots, m$
with smooth boundaries, such that
$G_0\Supset \Om_1 \Supset G_1 \Supset \ldots
\Supset \Om_m\Supset G_m$. We will assume that $\Om_1$ is close to $G_1$, so that
$\eps_1 <\dist(\partial G_0, \partial \Om_1)$.
Put
\[
T_j(N)=M(\ovr\Om_j)^*|\ker M(\ovr\Om_j)^N.
\]
Operators
$T_j(N)$ are nilpotent for any $N$.
We will show that the pseudospectra of the operator
\beqn
\label{def-T}
T=\oplus_{j=1}^m T_j(N_j)
\eeqn
(acting on a finite dimensional Hilbert space)
satisfy the inclusions \eqref{ps-inclusn}
if the numbers $N_j$ and $\eps_j$
are properly chosen.

These numbers will be defined by an inductive construction. We set
\[
\de=\min_{j, k}\,\dist(\pt\Om_j, \pt G_k)>0.
\]

$\bullet$ \enspace First step:
Notice that $M(\ovr\Om_1)$ is subnormal and $\si(M(\ovr\Om_1)^*)=\clos \Om_1$.
Theorem~\ref{uniformly pseudo}
implies that $\Psi_{\,T_1(N)}(z)\to \dist(z, \Om_1)$
on $\C\sm\Om_1$ and
$\Psi_{\,T_1(N)}(z)\to 0$
on $\Om_1$ as $N\to \infty$, uniformly in both cases.
Choose $N_1$ so that
$\Psi_{T_1(N_1)}(z)>\eps_1$ for $z$ in $\C\sm G_0$
and $\Psi_{T_1(N_1)}(z)<\eps_1/2$ on~$\Om_1$.

$\bullet$ \enspace $k$th step ($2\le k\le m$):
Suppose $N_1, \ldots, N_{k-1}$  and $\eps_j$ $(2\le j\le k-1)$ have been elected already.
On this step, we choose $\eps_k$ and $N_k$.

Choose any $\eps_k$ so that $\eps_k<\eps_{k-1}$, $\eps_k<\de$, and
\[
\max
\big\{
\|(T_j(N_j)-z)^{-1}\|: \quad
1\le j\le k-1, \;
z\in \C\sm G_{k-1}
\big\} < \eps_k^{-1}.
\]
Notice that
$\Psi_{\,T_k(N)}\to 0$ uniformly
on $\Om_k$ as $N\to \infty$. Choose $N_k$ so that
$\Psi_{\,T_k(N_k)}\le \eps_k/2$ on $\Om_k$.

After
$\eps_2, \dots, \eps_m$ and
$N_1, \dots, N_m$
have been chosen, define $T$ by \eqref{def-T}. It is a nilpotent operator on a finite dimensional
Hilbert space.

If $1\le k\le j\le m$
and $z\in\C\sm G_{k-1}$, then
\[
\|\big(T_j(N_j)-z\big)^{-1}\|
\le
\|(M(\ovr\Om_j)^*-z)^{-1}\|
\le \frac 1 {\dist(\pt G_{k-1}, \Om_j) }
\le \frac 1\de < \eps_k^{-1}.
\]
It follows that
$
\max_{z\in \C\sm G_{k-1}}\|(T_j(N_j)-z)^{-1}\|
 < \eps_k^{-1}
$
for all $j=1, \ldots, m$,
so that
\[
\max_{z\in \C\sm G_{k-1}}\|(T-z)^{-1}\|
 < \eps_k^{-1}.
\]
This implies that
$\si_{\eps_k}(T)\subset G_{k-1}$.
On the other hand,
$\Psi_{\,T_k(N_k)}\le \eps_k/2$ on $\Om_k$ implies that
$\Psi_T\le \eps_k/2$ on $\Om_k$, so that
$\si_{\eps_k}(T)\Supset\Om_k\supset G_k$ for $k=1, \ldots, m$
(recall that $\Psi_T$ is continuous on $\C$).
It follows that $T$ satisfies all inclusions in \eqref{ps-inclusn}.
\end{proof}

\begin{rem}\label{arbitr-shape}
The inclusions given in \eqref{ps-inclusn} imply that for any
$\eps\in [\eps_m, \eps_1]$, there exists an index $j$, $0\le j\le m-2$
such that $G_{j+2}\subset \si_{\eps}(T) \subset G_j$.
So Theorem~\ref{nilpotent} shows that in some sense, the shape of pseudospectra of a finite matrix
can be arbitrary. Certainly, we only are able to exhibit the example of this kind by
taking the quotients $\eps_j/\eps_{j+1}$ very large. As we mention in the Introduction,
the problem of describing all possible functions $\Psi_T(z)$ remains open.
\end{rem}

\section{Multipliers}
\label{sect-multipls}

\subsection{Cowen-Douglas class and estimates of functions of nilpotent matrices}
First let us recall the well known class of operator from the fundamental
paper of Cowen-Douglas \cite{cd}.
\begin{defn}
For $\Omega$ a connected open subset of $\mathbb C$ and $m$ a
positive integer, let $\mathbf B_{m}(\Omega)$ denote the set of
operators $T$ in $\mathcal B(\mathcal H)$ which satisfy the
following properties:
\begin{itemize}

\item  $\Omega\subset \sigma(T)$,
\item $\Ran {(T-w)}=\mathcal H $  for all $w$ in $\Omega$,

\item $\bigvee_{w\in\Omega}\ker(T-w)= \mathcal H$,
\item $\dim~\ker(T-w)= m \mbox{ for } w\in\Omega$. \end{itemize}
\end{defn}

Suppose $T$ is in $\mathcal B_m(\Omega)$ and $0\in \Om$. Put $
\cH_n=\ker T^n, \quad T_n=T\big|_{\mathcal H_n}$ and
$P_n=P_{\mathcal H_n}$. Then $\{P_n\}$ is a filtration on
$\mathcal H$ and
 $T$ is quasitriangular with respect to this filtration.
As we will see a little bit later,  $\cH_n$ is finite dimensional
and $T_n$ is nilpotent for any $n$.

\medskip

Let $f$ be a function, defined and analytic in some (connected) neighborhood of $0$.
Then all operators $f(T_n)$ are well defined.
Notice also that $\cup \, \cH_n$ is dense in $\cH$
(we refer to \cite[Section 1]{cd} for a background).
Put $f^*(z)=\ovr{f(\bar z)}$. The main result
of this section, Theorem~\ref{thm-mult}, says that
the norms $\|f(T_n)\|$ are uniformly bounded if and only if $f^*$
is a germ of a function in a certain multiplier space.
This will motivate Example~\ref{ex-mult}.

First we will need some preliminaries.

Put $\mathcal N_{\lambda}= \ker (T -\lambda )$, where $\la\in \Om$.
As it follows from the Grauert theorem,
the family of spaces $\{\mathcal N_\lambda\}_{\la\in \Om}$ possesses a
\textit{global analytic frame}: there exist analytic functions
$\ga_j:\Om\to\cH$, $1\le j\le m$ such that
$\{\ga_j(\la):1\le j\le m\}$ is a basis of $\cN_\la$ for any $\la\in \Om$
(see \cite{cd}).
Let
$\rho(\la):\BC^m\rightarrow \cN_\la$ be the isomorphism, defined by
\[
\rho(\la) e_j = \ga_j(\la), \qquad j=1, \ldots, m
\]
(here $\{e_j\}$ is the standard basis of $\BC^m$).
Then $T\rho(\la)=\la\rho(\la)$, $\la\in\Om$.

The following proposition is rather standard.
For reader's convenience, we include a simple proof.

\begin{prop}
There exists a Hilbert space $\wt\cH$ of holomorphic functions
from $\ovr \Omega = \{\bar{w} : w\in \Omega\}$
to $\BC^m$ and an isometric isomorphism
$
V: \cH\to\wt\cH
$
such that
\[
T^* = V^{-1}M_z V,
\]
where
 $M_z$ is the operator of multiplication by the co-ordinate
function on $\wt\cH$.
\end{prop}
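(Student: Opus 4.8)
The plan is to build $\wt\cH$ concretely as the space of "Cauchy-type" transforms of vectors in $\cH$ against the analytic frame, and then check that multiplication by the coordinate corresponds to $T^*$. Concretely, for $x\in\cH$ define $(Vx)(\bar\la)=\big(\langle x,\ga_1(\la)\rangle,\dots,\langle x,\ga_m(\la)\rangle\big)\in\BC^m$ for $\la\in\Om$. Since each $\ga_j$ is analytic in $\la$, the inner product $\langle x,\ga_j(\la)\rangle$ is anti-analytic in $\la$, hence analytic as a function of $\bar\la\in\ovr\Om$, so $Vx$ is a holomorphic $\BC^m$-valued function on $\ovr\Om$. First I would check that $V$ is injective: if $Vx=0$ then $x\perp\ga_j(\la)$ for all $j$ and all $\la\in\Om$, hence $x\perp\bigvee_{\la\in\Om}\ker(T-\la)=\cH$ by the defining property of $\mathbf B_m(\Om)$, so $x=0$.

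Next I would transport the Hilbert-space structure: define $\wt\cH=V\cH$ with the norm $\|Vx\|_{\wt\cH}=\|x\|_{\cH}$, which makes $V$ an isometric isomorphism by construction. (One checks this is a genuine norm because $V$ is injective; completeness is automatic since $\cH$ is complete and $V$ is a bijection onto $\wt\cH$.) It remains to identify the action of $T^*$. For $x\in\cH$ and $\la\in\Om$, using $T\ga_j(\la)=\la\ga_j(\la)$, we compute
\[
(VT^*x)(\bar\la)_j=\langle T^*x,\ga_j(\la)\rangle=\langle x,T\ga_j(\la)\rangle=\langle x,\la\ga_j(\la)\rangle=\bar\la\,\langle x,\ga_j(\la)\rangle=\bar\la\,(Vx)(\bar\la)_j,
\]
so that $VT^*x=M_z(Vx)$, where $M_z$ denotes multiplication by the coordinate function $\bar\la\mapsto\bar\la$ on $\wt\cH$. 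Equivalently $T^*=V^{-1}M_zV$, which is the asserted intertwining. I would also note in passing that $M_z$ indeed maps $\wt\cH$ into itself, since $M_z(Vx)=VT^*x$ and $T^*x\in\cH$.

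The only genuine subtlety — the step I expect to be the main point to get right — is making sure $\wt\cH$ is well defined independently of the arbitrary choice of global analytic frame $\{\ga_j\}$, or rather, simply acknowledging that $\wt\cH$ (as a concrete function space) does depend on that choice but the existence statement does not. Since the Proposition only asserts existence of some such $\wt\cH$ and $V$, fixing one frame (whose existence is guaranteed by the Grauert theorem, as recalled just above) suffices, and no uniqueness is claimed. A secondary routine point is verifying that the functions $\bar\la\mapsto\langle x,\ga_j(\la)\rangle$ are honestly holomorphic on $\ovr\Om$ rather than merely real-analytic: this follows because $\la\mapsto\ga_j(\la)$ is $\cH$-valued analytic, so $\la\mapsto\langle x,\ga_j(\la)\rangle$ is anti-holomorphic in $\la$, i.e. holomorphic in $\bar\la$. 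These are the two things to state carefully; everything else is the short computation displayed above.
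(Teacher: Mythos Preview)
Your proof is correct and is essentially identical to the paper's own proof: the paper defines $Vx(\la)=\big(\rho(\bar\la)\big)^*x$ for $\la\in\ovr\Om$, which in coordinates is exactly your $(Vx)(\bar\la)_j=\langle x,\ga_j(\la)\rangle$, and then uses $T\rho(\la)=\la\rho(\la)$ to obtain $VT^*=M_zV$ and sets $\wt\cH=V\cH$. You have simply spelled out the adjoint $\rho(\bar\la)^*$ componentwise and added the (straightforward) verifications of injectivity and holomorphy that the paper leaves implicit.
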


\begin{proof}
This realization is provided by the injective map $V: \cH\to\Hol(\ovr\Om, \BC^m)$, given by
\[
Vx(\la)= \big(\rho(\bar\la)\big)^* x, \qquad x\in \cH,\, \la \in \ovr\Om
\]
(here $\Hol(\ovr\Om, \BC^m)$ stands for the space of all analytic function from
$\ovr\Om$ to $\BC^m$).
The identity
$T\rho(\la)=\la\rho(\la)$ implies
the intertwining property $VT^* = M_z V$, so that
one has just to set $\wt\cH= V\cH$.
\end{proof}

Certainly, $\wt\cH$ can be seen as a vector-valued
reproducing kernel Hilbert space.
This is, in fact, an alternative point of view
to the Cowen--Douglas class, which is discussed in the paper of Curto and Salinas
\cite{curto}; in fact, $k$-tuples of operators were considered there.
For one operator $T\in \mathcal{B}_1(\Om)$, this fact is contained in \cite[Subsection 1.15]{cd}.

The commutant of $T$ is the weakly closed algebra of operators which commute with $T$.
We denote it as $\{T\}'$.
Notice that for any $S\in\{T\}'$, $S\cN_\la\subset \cN_\la$, $\la\in\Om$.
So there exist (uniquely defined) linear fibre maps
$\Phi_S(\lambda):\mathcal N_{\lambda}\rightarrow \mathcal N_{\lambda}$
such that
\[
Sk =\Phi_S(\la)k \quad \text{for all $\la\in \Om$, $k\in \cN_\la$}
\]
(Cowen and Douglas in \cite{cd} use the notation $\Phi_S=\Gamma_T S$).

Given an operator $S\in \{T\}'$, put
\[
\wt\Phi_S(\la) = \rho(\la) \Phi_S(\la)\rho(\la)^{-1},
\]
so that $\wt\Phi_S(\la)\in \mathcal{B}(\BC^m)$ and
$\wt\Phi_S(\la)$ is analytic in $\la$.
It is easy to see that
the matrix-valued function $\wt\Phi_S$ is analytic in $\Om$.

\textit{The multiplier algebra} $\Mult\subset \Hol(\ovr\Om)$ is defined as
the set of (scalar) functions $\phi$ on $\ovr\Om$ that multiply $\wt\cH$ into itself,
i.e. \[
\{\phi: \phi f\in \wt\cH,
\mbox{ for all  $f\in \wt\cH$}\}.
\]
It follows from the closed graph theorem that if $\phi$ is a multiplier,
then $M_\phi$ is a bounded linear operator on $\wt\cH$.

The following fact follows immediately.

\begin{prop}
\label{prop-comm}
Consider a subclass of the commutant, defined by
\[
\mathcal C_{T}(\Om)=\{S\in \{T\}':
\mbox{ $\exists$ a scalar function $\phi_{S}(\la)$: }
\Phi_S(\la)=\phi_{S}(\la)I_{\mathcal N_{\la}},\; \la\in\Om\}.
\]
Then
\[
\big\{\phi_S^* : S\in \mathcal C_{T}(\Om)\big\} = \Mult.
\]
\end{prop}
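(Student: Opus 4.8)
The plan is to carry the statement through the isometric model $V:\cH\to\wt\cH$, for which $VT^*=M_zV$ (equivalently $VTV^{-1}=M_z^*$), thereby reducing it to the identification of the scalar multiplication operators on $\wt\cH$ with the adjoints of the elements of $\mathcal C_T(\Om)$. The one structural fact I need about $\wt\cH$ is that it is a reproducing kernel space whose kernel sections are the images of the analytic frame: from $Vx(\bar\mu)=\rho(\mu)^*x$ (for $\mu\in\Om$, $x\in\cH$) and surjectivity of $V$, one checks that for each $\xi\in\BC^m$ the vector $V\rho(\mu)\xi\in\wt\cH$, which I denote $K(\cdot,\bar\mu)\xi$, satisfies $\langle g,K(\cdot,\bar\mu)\xi\rangle_{\wt\cH}=\langle g(\bar\mu),\xi\rangle_{\BC^m}$ for all $g\in\wt\cH$. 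I will also use the elementary computation $M_\phi^*\bigl(K(\cdot,\bar\mu)\xi\bigr)=\overline{\phi(\bar\mu)}\,K(\cdot,\bar\mu)\xi$ valid for $\phi\in\Mult$.

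The key step is to see what conjugation by $V$ does to an arbitrary $S\in\{T\}'$. Since $S\cN_\mu\subset\cN_\mu$ and $S\rho(\mu)\xi=\rho(\mu)\wt\Phi_S(\mu)\xi$ by the definition of $\wt\Phi_S$, one obtains $VSV^{-1}\bigl(K(\cdot,\bar\mu)\xi\bigr)=K(\cdot,\bar\mu)\wt\Phi_S(\mu)\xi$. Taking adjoints (recall $V^{-1}=V^*$) and pairing $VS^*V^{-1}g$ against $K(\cdot,\bar\mu)\xi$ for a general $g\in\wt\cH$, the reproducing property yields $(VS^*V^{-1}g)(\bar\mu)=\wt\Phi_S(\mu)^*\,g(\bar\mu)$ for every $\mu\in\Om$; in other words, $VS^*V^{-1}$ is the operator of multiplication by the matrix-valued function $w\mapsto\wt\Phi_S(\bar w)^*$ on $\wt\cH$.

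With this identity both inclusions are short. For $\subseteq$, let $S\in\mathcal C_T(\Om)$; then $\wt\Phi_S(\mu)=\rho(\mu)\phi_S(\mu)I\rho(\mu)^{-1}=\phi_S(\mu)I_{\BC^m}$, so $VS^*V^{-1}$ is multiplication by $\phi_S^*$. Since $\phi_S$ is analytic on $\Om$ (because $\wt\Phi_S$ is), $\phi_S^*\in\Hol(\ovr\Om)$, and the displayed formula shows $\phi_S^* g\in\wt\cH$ for every $g\in\wt\cH$, so $\phi_S^*\in\Mult$. For $\supseteq$, let $\phi\in\Mult$; by the closed graph theorem $M_\phi\in\mathcal B(\wt\cH)$, and $M_\phi$ commutes with $M_z$. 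Set $S=V^{-1}M_\phi^*V$. Then $S^*=V^{-1}M_\phi V$ commutes with $V^{-1}M_zV=T^*$, hence $S\in\{T\}'$; and for $k=\rho(\mu)\xi\in\cN_\mu$, using $Vk=K(\cdot,\bar\mu)\xi$ and the kernel identity recalled above, $Sk=V^{-1}M_\phi^*\bigl(K(\cdot,\bar\mu)\xi\bigr)=\overline{\phi(\bar\mu)}\,\rho(\mu)\xi=\phi^*(\mu)\,k$. Thus $\Phi_S(\mu)=\phi^*(\mu)I_{\cN_\mu}$, so $S\in\mathcal C_T(\Om)$ with $\phi_S=\phi^*$, and hence $\phi_S^*=(\phi^*)^*=\phi$. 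This gives $\Mult\subseteq\{\phi_S^*:S\in\mathcal C_T(\Om)\}$ and completes the proof.

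The whole argument rests on the two displayed identities of the first two paragraphs and is otherwise formal. The point needing the most care is the bookkeeping of the conjugation $\phi\mapsto\phi^*$: because $V$ intertwines $M_z$ with $T^*$ rather than with $T$, it is $S^*$, not $S$, that becomes an honest multiplication operator, and one must keep track that this conjugation carries analytic functions on $\Om$ to analytic functions on $\ovr\Om$ and back, and that it is an involution. Everything else reduces to the elementary reproducing kernel computations recalled above.
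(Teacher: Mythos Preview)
Your proof is correct and is precisely the argument the paper has in mind: the paper states the proposition as an immediate consequence of the model $V:\cH\to\wt\cH$ set up just before it and gives no proof at all, so your write-up simply fills in the details of that implicit reasoning (the reproducing-kernel identity $V\rho(\mu)\xi=K(\cdot,\bar\mu)\xi$, the action of $M_\phi^*$ on kernels, and the resulting identification of $VS^*V^{-1}$ with multiplication by $w\mapsto\wt\Phi_S(\bar w)^*$).
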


\begin{lemA}(see \cite{cd}, Lemma 1.22). Suppose $ T \in \mathbf B_m(\Omega)$. Then
\begin{enumerate}

\item
\label{labi}
$(T-\la)\ga^{(\ell)}_k(\la)= \ell \ga^{(\ell-1)}_k(\la)$ for all $\ell\ge 1$ and
$k=1, \dots, m$;

\item For all
$\la \in \Om$,
\begin{align*}
\ker(T-\lambda)^n & =
\Span\{\ga_k^{(\ell)}(\la):1\le k\le m, \, 0\leq \ell\leq n-1\} \\
& =\Span\{\Ran \rho^{(\ell)}(\la):0\leq \ell\leq n-1\}.
\end{align*}
\end{enumerate}
\end{lemA}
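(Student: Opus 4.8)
The plan is to get part (1) by differentiating the identity that defines the analytic frame, and then to bootstrap from it to part (2) by a dimension count. For part (1): each $\ga_k$ is an analytic $\cH$-valued function on $\Om$ with $(T-\la)\ga_k(\la)\equiv 0$, and I would differentiate this identity $\ell$ times in $\la$ using the Leibniz rule. Since $\la\mapsto T-\la$ has first derivative $-I$ and all higher derivatives zero, only two terms survive and one gets $(T-\la)\ga_k^{(\ell)}(\la)-\ell\,\ga_k^{(\ell-1)}(\la)=0$, which is the claim. Iterating this relation yields $(T-\la)^{j}\ga_k^{(\ell)}(\la)=\frac{\ell!}{(\ell-j)!}\,\ga_k^{(\ell-j)}(\la)$ for $0\le j\le\ell$ and $(T-\la)^{\ell+1}\ga_k^{(\ell)}(\la)=0$; in particular $\ga_k^{(\ell)}(\la)\in\ker(T-\la)^{\ell+1}$, a fact I will reuse.

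For part (2): the equality of the two right-hand sides is immediate, since $\rho(\la)e_k=\ga_k(\la)$ gives $\rho^{(\ell)}(\la)e_k=\ga_k^{(\ell)}(\la)$ and hence $\Ran\rho^{(\ell)}(\la)=\Span\{\ga_k^{(\ell)}(\la):1\le k\le m\}$. So it remains to show $\ker(T-\la)^{n}=\Span\{\ga_k^{(\ell)}(\la):1\le k\le m,\ 0\le\ell\le n-1\}$. The inclusion ``$\supseteq$'' follows from the last remark of part (1). For ``$\subseteq$'' I would first compute the dimension of the left-hand side: the Cowen--Douglas hypotheses give $\Ran(T-\la)=\cH$ and $\dim\ker(T-\la)=m$, so $T-\la$ is Fredholm of index $m$; the powers $(T-\la)^{n}$ are then surjective, whence $\ker\bigl((T-\la)^{n}\bigr)^{*}=0$, and additivity of the Fredholm index gives $\dim\ker(T-\la)^{n}=nm$. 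It therefore suffices to see that the $nm$ vectors $\ga_k^{(\ell)}(\la)$, $1\le k\le m$, $0\le\ell\le n-1$, are linearly independent. For this I would ``peel'' a vanishing combination $\sum_{\ell,k}c_{k\ell}\ga_k^{(\ell)}(\la)=0$ from the top: applying $(T-\la)^{n-1}$ annihilates every term with $\ell\le n-2$ by the iterated formula of part (1) and leaves $(n-1)!\sum_k c_{k,n-1}\ga_k(\la)=0$, forcing $c_{k,n-1}=0$ because $\{\ga_k(\la)\}_k$ is a basis of $\cN_\la$; then $(T-\la)^{n-2}$ kills the next layer, and so on down to $\ell=0$. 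Linear independence together with ``$\supseteq$'' and $\dim\ker(T-\la)^{n}=nm$ gives equality.

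The argument is essentially bookkeeping with derivatives of the frame; the one ingredient that is not a formal manipulation is the identification $\dim\ker(T-\la)^{n}=nm$, which rests on Fredholm index theory (additivity of the index under composition, together with surjectivity of $(T-\la)$) rather than on any special feature of the analytic frame. I do not anticipate a genuine obstacle.
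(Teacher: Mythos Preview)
Your argument is correct. Note, however, that the paper does not supply its own proof of Lemma~A: it is quoted verbatim as Lemma~1.22 of Cowen--Douglas \cite{cd}, so there is no in-paper proof to compare against. Your approach---differentiating the frame identity for part~(1), then combining the resulting Jordan-chain relations with the Fredholm index count $\dim\ker(T-\la)^{n}=nm$ for part~(2)---is the standard one and is essentially what appears in \cite{cd}. The peeling argument for linear independence is clean and correct; the only external input, as you note, is additivity of the index, which is entirely standard.
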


It follows from this lemma that
the vectors
$\ga_k^{(\ell)}(\la)$ $(1\le k\le m, \, 0\leq \ell\leq n-1)$
form a basis of $\ker(T-\lambda)^n$ for any $\la\in\Om$ and any $n$.
In particular, $T_n$ is nilpotent, and its Jordan form has $m$ Jordan blocks
of order $n$.

\begin{lem}
\label{lem-Phi}
Suppose $S\in \{T\}'$ and $n\ge 1$.
Then $S_{ \mid \cH_n}=0$ if and only if
$\Phi_S=z^n \Psi$,
where $\Psi$ is analytic in a neighborhood of $0$.
\end{lem}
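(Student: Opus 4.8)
The plan is to translate both sides of the equivalence into statements about the analytic matrix-valued function $\wt\Phi_S$ and the global analytic frame $\rho(\la)$, and then to compare Taylor coefficients at $0$. Throughout I read the conclusion as a statement about the "trivialized" map: writing $\Phi_S=z^n\Psi$ means $\wt\Phi_S(z)=z^n\Psi(z)$ with $\Psi$ analytic (and $\BC^m$-matrix-valued) near $0$; this is the natural reading since the fibrewise maps $\Phi_S(\la)$ act on the moving space $\cN_\la$.

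First I would record the intertwining identity. Since $S\in\{T\}'$ preserves every fibre $\cN_\la$ and acts there as $\Phi_S(\la)$, pulling back through $\rho(\la)$ gives
\[
S\rho(\la)=\rho(\la)\,\wt\Phi_S(\la),\qquad \la\in\Om,
\]
an identity between analytic functions $\Om\to\mathcal B(\BC^m,\cH)$ (analyticity of $\rho$ comes from the construction of the global analytic frame via Grauert's theorem, and analyticity of $\wt\Phi_S$ was established earlier). Differentiating $\ell$ times and applying the Leibniz rule yields
\[
S\rho^{(\ell)}(\la)=\sum_{j=0}^{\ell}\binom{\ell}{j}\,\rho^{(j)}(\la)\,\wt\Phi_S^{(\ell-j)}(\la).
\]
Next, by Lemma A, part (2), evaluated at $\la=0$ (legitimate since $0\in\Om$), one has $\cH_n=\ker T^n=\Span\{\Ran\rho^{(\ell)}(0):0\le\ell\le n-1\}$. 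Hence $S_{\mid\cH_n}=0$ if and only if $S\rho^{(\ell)}(0)=0$ for every $\ell=0,1,\dots,n-1$.

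The heart of the matter is then to show that $S\rho^{(\ell)}(0)=0$ for $0\le\ell\le n-1$ is equivalent to $\wt\Phi_S^{(\ell)}(0)=0$ for $0\le\ell\le n-1$, i.e.\ to $\wt\Phi_S$ vanishing to order $n$ at $0$. One direction is immediate from the Leibniz formula above: if $\wt\Phi_S^{(k)}(0)=0$ for $k=0,\dots,n-1$, then for $\ell\le n-1$ every summand contains a factor $\wt\Phi_S^{(\ell-j)}(0)$ with $\ell-j\le n-1$, so $S\rho^{(\ell)}(0)=0$. For the converse I would induct on $\ell$: the case $\ell=0$ reads $\rho(0)\wt\Phi_S(0)=0$, and since $\rho(0):\BC^m\to\cN_0$ is a linear isomorphism (in particular injective), $\wt\Phi_S(0)=0$; for the step, in
\[
0=S\rho^{(\ell)}(0)=\rho(0)\,\wt\Phi_S^{(\ell)}(0)+\sum_{j=1}^{\ell}\binom{\ell}{j}\,\rho^{(j)}(0)\,\wt\Phi_S^{(\ell-j)}(0)
\]
all terms with $j\ge1$ vanish by the induction hypothesis (there $\ell-j<\ell$), leaving $\rho(0)\wt\Phi_S^{(\ell)}(0)=0$, hence $\wt\Phi_S^{(\ell)}(0)=0$ by injectivity of $\rho(0)$. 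Combining the two reductions gives the lemma.

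I do not expect a genuine obstacle here; the only points needing a little care are the bookkeeping in the Leibniz expansion and the justification that $S\rho$ and $\rho\,\wt\Phi_S$ are bona fide analytic operator-valued functions (so that termwise differentiation applies), which follows from the regularity of the global analytic frame and of $\wt\Phi_S$ already set up above. The injectivity of $\rho(0)$ — used twice — is what lets one pass from vanishing of the vector-valued expressions back to vanishing of the scalar/matrix Taylor coefficients of $\wt\Phi_S$.
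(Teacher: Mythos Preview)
Your proof is correct and follows essentially the same route as the paper's: reduce $S_{\mid\cH_n}=0$ to $S\rho^{(\ell)}(0)=0$ for $0\le\ell\le n-1$ via Lemma~A, differentiate the intertwining relation $S\rho(\la)=\rho(\la)\wt\Phi_S(\la)$ using the Leibniz rule, and then argue by induction on $\ell$ (using that $\rho(0)$ is an isomorphism) that this is equivalent to the vanishing of $\wt\Phi_S^{(\ell)}(0)$ for $0\le\ell\le n-1$. Your write-up is somewhat more explicit in treating the two implications separately, but the argument is the same.
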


\begin{proof}
By Lemma A, $S_{ \mid \cH_n}=0$ if and only if $S\rho^{(k)}(0)=0$ for $k=0, 1, \ldots, n-1$.

By taking $k$th derivative in the identity $S\rho(\la)=\rho(\la) \wt\Phi_S(\la)$, we get
\[
S\rho^{(k)}(\la)=
\sum_{\ell=0}^{k-1} {k\choose \ell}\rho^{(k-\ell)}(\la)\wt\Phi_S^{(\ell)}(\la) + \rho(\la)\wt\Phi_S^{(k)}(\la).
\]
By applying induction in $k$, we get
that $S_{ \mid \cH_n}=0$ if and only if
$\wt\Phi_S^{(k)}(0)=0$ for all $k=0, 1, \ldots, n-1$
(notice that $\rho(0)$ is an isomorphism).
This implies the statement of Lemma.
\end{proof}

\begin{prop}
\label{prop-restr}
If $S\in \mathcal C_{T}(\Om)$, then
$S_{\mid \cH_n}= \phi_S(T_n)$ for all $n\ge 1$.
\end{prop}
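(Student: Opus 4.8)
The plan is to show that $S_{\mid \cH_n}$ and $\phi_S(T_n)$ agree on a spanning set of $\cH_n = \ker T^n$, namely on the vectors $\ga_k^{(\ell)}(0)$ with $1\le k\le m$, $0\le \ell\le n-1$, which form a basis of $\cH_n$ by Lemma A. Since $S\in\mathcal C_T(\Om)$, we have $S\ga_k(\la)=\phi_S(\la)\ga_k(\la)$ for all $\la\in\Om$; differentiating this identity $\ell$ times at $\la=0$ by the Leibniz rule gives
\[
S\ga_k^{(\ell)}(0)=\sum_{i=0}^{\ell}\binom{\ell}{i}\phi_S^{(i)}(0)\,\ga_k^{(\ell-i)}(0).
\]
So $S$ acts on the basis $\{\ga_k^{(\ell)}(0)\}$ exactly the way multiplication by the Taylor polynomial of $\phi_S$ at $0$ acts.

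Next I would identify the right-hand side with $\phi_S(T_n)$. Write $\phi_S(z)=\sum_{i\ge 0}c_i z^i$ with $c_i=\phi_S^{(i)}(0)/i!$; since $T_n$ is nilpotent of index $n$ (by Lemma A, its Jordan form has $m$ blocks of size $n$), $\phi_S(T_n)=\sum_{i=0}^{n-1}c_i T_n^i$ is a well-defined polynomial in $T_n$. Using part (1) of Lemma A, $(T-\la)\ga_k^{(\ell)}(\la)=\ell\,\ga_k^{(\ell-1)}(\la)$, specialized at $\la=0$, one gets $T\ga_k^{(\ell)}(0)=\ell\,\ga_k^{(\ell-1)}(0)$ (with the convention $\ga_k^{(-1)}=0$), hence by iteration $T_n^i\ga_k^{(\ell)}(0)=\frac{\ell!}{(\ell-i)!}\,\ga_k^{(\ell-i)}(0)$ for $i\le\ell$ and $0$ for $i>\ell$. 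Therefore
\[
\phi_S(T_n)\ga_k^{(\ell)}(0)=\sum_{i=0}^{\ell}c_i\,\frac{\ell!}{(\ell-i)!}\,\ga_k^{(\ell-i)}(0)
=\sum_{i=0}^{\ell}\binom{\ell}{i}\phi_S^{(i)}(0)\,\ga_k^{(\ell-i)}(0),
\]
which coincides with the expression for $S\ga_k^{(\ell)}(0)$ computed above.

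Since both operators $S_{\mid\cH_n}$ and $\phi_S(T_n)$ are linear and agree on the basis $\{\ga_k^{(\ell)}(0):1\le k\le m,\ 0\le\ell\le n-1\}$ of $\cH_n$, they are equal, which proves the proposition. The only mild subtlety is purely bookkeeping: the combinatorial match between the Leibniz expansion of the $\ell$-th derivative of $\phi_S(\la)\ga_k(\la)$ and the action of the polynomial $\phi_S(T_n)$ on the generalized eigenvector $\ga_k^{(\ell)}(0)$; both come down to the same binomial identity, so no real obstacle arises. One should also note that $\cH_n$ is $S$-invariant (true since $S\in\{T\}'$ preserves each $\ker T^n$), so that $S_{\mid\cH_n}$ is a genuine operator on $\cH_n$ to begin with.
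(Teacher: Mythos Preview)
Your proof is correct. The paper takes a slightly different, more modular route: it fixes $n$, picks any polynomial $g$ with $\phi_S - g = z^n\psi$ (i.e.\ the $(n-1)$th Taylor polynomial of $\phi_S$ at $0$), observes that $\Phi_{S-g(T)}(z)=z^n\psi(z)I_{\cN_z}$, and then invokes Lemma~\ref{lem-Phi} to conclude $(S-g(T))_{\mid\cH_n}=0$, whence $S_{\mid\cH_n}=g(T)_{\mid\cH_n}=\phi_S(T_n)$.

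Your argument instead works directly on the basis $\{\ga_k^{(\ell)}(0)\}$ of $\cH_n$ supplied by Lemma~A, computing both $S$ and $\phi_S(T_n)$ on these vectors via Leibniz and the relation $T\ga_k^{(\ell)}(0)=\ell\,\ga_k^{(\ell-1)}(0)$. This is essentially the content of the derivative computation inside the proof of Lemma~\ref{lem-Phi}, specialized to scalar $\Phi_S$ and carried all the way through rather than packaged as a separate lemma. What the paper's version buys is brevity and reuse of Lemma~\ref{lem-Phi}; what yours buys is self-containment (you never need the ``only if'' direction of Lemma~\ref{lem-Phi}) and an explicit combinatorial verification, which some readers may find more transparent.
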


\begin{proof}
Fix some $n\ge 1$, and let $g$ be any polynomial such
$\phi_S-g = z^n\psi$, where $\psi$ is analytic at $0$. Then $\Phi_{S-g(T)}=z^n\psi(z) I_{\mathcal{N}_z}$.
By Lemma~\ref{lem-Phi},
$(S-g(T))_{\mid H_n}=0$, and therefore
$S_{\mid H_n}=g(T)_{\mid H_n}=\phi_S(T_n)$.
\end{proof}

As consequence of the above lemma, we will prove the following theorem.
\begin{thm}
\label{thm-mult}
Let  $0\in \Om$ and let  $ T$ be an operator in $\mathbf
B_n(\Omega)$. Put $\mathcal H_n=\ker T^n$, $T_n=T_{ \mid \mathcal H_n}$.
Let $f$ be a function, defined and analytic in the neighborhood of $0$.
Then the following properties are equivalent.
\begin{enumerate}
\item The norms $\|f(T_n)\|$ are uniformly bounded;

\item There exists an operator $S\in \mathcal C_T(\Om)$
such that $\phi_S=f$;

\item $f^*\in \Mult$ (or, more precisely, $f^*$ extends to a function in $\Mult$).
\end{enumerate}

If these properties hold, then $S^*$ equals to the multiplication by $f^*$ on $\wt\cH$.
\end{thm}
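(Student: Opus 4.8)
The plan is to establish the cycle $(2)\Rightarrow(1)\Rightarrow(2)$ together with the equivalence $(2)\Leftrightarrow(3)$, the latter being essentially a reformulation of Proposition~\ref{prop-comm}. For $(2)\Rightarrow(1)$ one simply invokes Proposition~\ref{prop-restr}: if $S\in\mathcal C_T(\Om)$ has $\phi_S=f$, then $S_{\mid\cH_n}=\phi_S(T_n)=f(T_n)$ (the last equality because $T_n$ is nilpotent, so both $f(T_n)$ and $\phi_S(T_n)$ depend only on the finitely many coinciding Taylor coefficients at $0$), whence $\|f(T_n)\|=\|S_{\mid\cH_n}\|\le\|S\|$ for every $n$.

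The substantial direction is $(1)\Rightarrow(2)$, where the idea is to glue the $f(T_n)$ into a single bounded operator and then identify it through its fibre maps. Since $\cH_n$ is invariant under $T_{n+1}=T\big|_{\cH_{n+1}}$, $T_{n+1}\big|_{\cH_n}=T_n$, and each $T_n$ is nilpotent, one has $f(T_{n+1})\big|_{\cH_n}=f(T_n)$; hence $S_0x:=f(T_n)x$ for $x\in\cH_n$ is a well-defined linear map on the dense subspace $\bigcup_n\cH_n$ with $\|S_0x\|\le\big(\sup_n\|f(T_n)\|\big)\|x\|$, which by $(1)$ extends to a bounded operator $S$ on $\cH$. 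A direct computation on $\bigcup_n\cH_n$ (using $T\big|_{\cH_n}=T_n$ and $T\cH_n\subset\cH_n$) gives $STx=f(T_n)T_nx=TSx$, so $S\in\{T\}'$. Thus the fibre maps $\Phi_S(\la):\cN_\la\to\cN_\la$ are defined and the associated analytic matrix function $\wt\Phi_S$ is analytic on the connected set $\Om$. Applying the identity theorem entrywise to $\wt\Phi_S$, it then suffices to prove that $\Phi_S(\la)=f(\la)I_{\cN_\la}$ for $\la$ in a neighbourhood of $0$; the scalar $\phi_S$ in the definition of $\mathcal C_T(\Om)$ is automatically forced to be the analytic continuation of $f$ to $\Om$.

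For the local computation, I would use the Taylor expansion $\rho(\la)=\sum_{\ell\ge0}\frac{\la^\ell}{\ell!}\rho^{(\ell)}(0)$, which converges in operator norm near $0$ because $\rho$ is analytic, together with the relation $T\rho^{(\ell)}(0)=\ell\,\rho^{(\ell-1)}(0)$ from Lemma~A. Feeding the latter into $f(T_n)=\sum_{j\ge0}\frac{f^{(j)}(0)}{j!}T_n^{\,j}$ (a finite sum) and combining with $S_{\mid\cH_n}=f(T_n)$ yields
\[
S\rho^{(\ell)}(0)=\sum_{j=0}^{\ell}\binom{\ell}{j}f^{(j)}(0)\,\rho^{(\ell-j)}(0),\qquad\ell\ge0.
\]
Since $S$ is bounded, $S\rho(\la)=\sum_{\ell}\frac{\la^\ell}{\ell!}S\rho^{(\ell)}(0)$ near $0$, and one recognizes the right-hand side as the Cauchy product of the absolutely convergent series $\sum_j\frac{f^{(j)}(0)}{j!}\la^j=f(\la)$ and $\sum_i\frac{\rho^{(i)}(0)}{i!}\la^i=\rho(\la)$. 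Hence $S\rho(\la)=f(\la)\rho(\la)$, i.e.\ $\Phi_S(\la)=f(\la)I_{\cN_\la}$, for $\la$ near $0$, which by the reduction above completes $(1)\Rightarrow(2)$.

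Finally, $(2)\Leftrightarrow(3)$ is Proposition~\ref{prop-comm}: since $\{\phi_S^*:S\in\mathcal C_T(\Om)\}=\Mult$, condition $(2)$ holds iff $f^*=\phi_S^*$ for some admissible $S$, that is, iff $f^*$ extends to an element of $\Mult$. For the closing assertion, when $(2)$ holds one uses the realization $Vx(\la)=\big(\rho(\bar\la)\big)^*x$: for $x\in\cH$ and $\la\in\ovr\Om$,
\[
(VS^*x)(\la)=\big(\rho(\bar\la)\big)^*S^*x=\big(S\rho(\bar\la)\big)^*x=\ovr{\phi_S(\bar\la)}\,\big(\rho(\bar\la)\big)^*x=\phi_S^*(\la)\,(Vx)(\la),
\]
so $VS^*=M_{f^*}V$, i.e.\ $S^*$ is the multiplication by $f^*$ on $\wt\cH$. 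The step I expect to be the main obstacle is the identification of the glued operator $S$ inside $\mathcal C_T(\Om)$: its boundedness and commutation with $T$ are routine, but the scalarity of $\Phi_S$ genuinely exploits the structure $\cH_n=\ker T^n$, and the device that makes it tractable is reducing to a neighbourhood of $0$ by analyticity and then matching Taylor coefficients via Lemma~A and a Cauchy product.
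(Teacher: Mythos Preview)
Your proof is correct and follows the same overall architecture as the paper: construct $S$ on $\bigcup_n\cH_n$ from the compatible family $f(T_n)$, extend by boundedness, check $S\in\{T\}'$, and then identify $\Phi_S$ with $f(\cdot)I_{\cN_{(\cdot)}}$ near $0$ (whence on all of $\Om$ by analyticity). The equivalence $(2)\Leftrightarrow(3)$ via Proposition~\ref{prop-comm} and the computation of $S^*$ as $M_{f^*}$ are handled as in the paper, with your explicit calculation via $V$ being a welcome spelling-out of the last claim.

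The one genuine difference is in the mechanism for proving $\Phi_S(\la)=f(\la)I_{\cN_\la}$ near $0$. The paper truncates: for each $n$ it picks a polynomial $p_n$ with $f-p_n=z^n\psi$, notes $S_{\mid\cH_n}=p_n(T)_{\mid\cH_n}$, and invokes Lemma~\ref{lem-Phi} to conclude that $\Phi_S-p_n(\cdot)I$ vanishes to order $n$ at $0$; letting $n$ vary forces $\Phi_S=fI$ as germs. You instead compute directly: using $T\rho^{(\ell)}(0)=\ell\,\rho^{(\ell-1)}(0)$ from Lemma~A you obtain $S\rho^{(\ell)}(0)=\sum_{j}\binom{\ell}{j}f^{(j)}(0)\rho^{(\ell-j)}(0)$, and recognize the Taylor series of $S\rho(\la)$ as the Cauchy product $f(\la)\rho(\la)$. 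Your route bypasses Lemma~\ref{lem-Phi} entirely and is arguably more self-contained; the paper's route is slightly slicker once that lemma is in hand and makes the ``order-by-order'' matching more transparent. Either way the content is the same: both arguments amount to showing that $\Phi_S$ and $fI$ have identical Taylor jets at $0$.
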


\begin{proof}
Since $T_n$ acts on a finite-dimensional $\cH_n$ and is nilpotent,
$f(T_n)$ is well-defined for all $n$. It is immediate that
$f(T_n)_{\mid H_m}=f(T_m)$ for all $n\ge m$, therefore the norms
$\|f(T_n)\|$ increase as $n\to\infty$. First we show that (1) and (2)
are equivalent.

(2)$\implies$(1). \enspace
Let $S\in \mathcal C_{T}(\Om)$. Then, by Propositions~\ref{prop-comm} and
~\ref{prop-restr}, $\|\phi_S(T_n)\|=\|S_{\mid \cH_n}\|\le \|S\|$ for all $n$.

(1)$\implies$(2). \enspace
Suppose that the norms
$\|f(T_n)\|$ are uniformly bounded. Since $\cup \,\cH_n$
is dense in
$\cH$, the formula
\[
S_{\mid \cH_n}=f(T_n), \quad n\ge1
\]
defines correctly a bounded operator $S$ on $\cH$.
For
any $h \in \mathcal \cH_n$,
we have
\[
STh=
f(T_n)T_n h =T_n f(T_n)h =T_n Sh =TSh.
\]
Hence, $S\in \{T\}^{\prime}$. Now we can repeat the arguments used above in the proof of Proposition~\ref{prop-restr}.
Fix some $n\ge 1$, and let $p_n$ be a polynomial  such that $f-p_n=z^n \psi$, where $\psi$
is analytic at $0$.
We have
\[
S_{\mid \cH_n}=f(T_n)=p_n(T_n)=p_n(T)_{\mid \cH_n}
\]
(the equality $f(T_n)=p_n(T_n)$ is due to the Jordan structure of $T_n$).
By Lemma~\ref{lem-Phi}, this implies
that $\Phi_{S}-\Phi_{p_n(T)}=z^n\Psi$ for an analytic fibre map $\Psi$.
Since $\Phi_{p_n(T)}(z)=p_n(z) I_{\cN_z}$, we get that for any $n$,
$\Phi_{S}(z)$ coincides with $f(z) I_{\cN_z}$ at the origin, up to the $n$th order. Therefore
$\Phi_{S}(z)=f(z) I_{\cN_z}$ in a neighbourhood of $0$, which gives (2).

The equivalence (2)$\Longleftrightarrow$(3) and the last statement of the Theorem follow from Proposition~\ref{prop-comm}.
\end{proof}

\subsection{An example}
In what follows, we will denote by $\sqrt{\phantom\cdot}$ \textit{the principal branch} of the square
root, defined for all $z\ne 0$ by $\sqrt z= |z|^{1/2}\exp\big(i (\arg z)/2\big)$, where
$\arg z \in (-\pi, \pi]$ (so that the cut is along $\R_-$).

The next Lemma is auxiliary and will be used in Example~\ref{ex-mult} below.

\begin{lem}\label{taylor}
Define the function
$f_t(z)=\root\of{z^2-z+t}$, where $t\in\BC$, and let
\[
f_t(z)=\sum_{n=0}^\infty \hat f_n(t) z^n
\]
be its Taylor expansion at the origin.
Let $0<r<1/4$ and $\MM>0$ be fixed.
Then there exists some
$N=N(r,M)$ such that
\[
\max_{1\le n\le N} |\hat f_n(t)| >M
\]
for all $t\in\BC$ such that $|t-1/4|=r$.
\end{lem}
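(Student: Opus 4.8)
The plan is to argue by contradiction: if, for arbitrarily large $N$, there were a point $t$ on the circle $|t-1/4|=r$ with $\max_{1\le n\le N}|\hat f_n(t)|\le M$, a compactness argument would produce a $t_0$ on that circle all of whose Taylor coefficients $\hat f_n(t_0)$ are bounded by $M$, hence a $t_0$ for which the Taylor series of $f_{t_0}$ at $0$ has radius of convergence $\ge 1$; and this contradicts the location of the branch points of $\sqrt{z^2-z+t_0}$. To set this up, I first fix a branch of the square root depending holomorphically on the parameter. Since $r<1/4$, choose $r'\in(r,1/4)$. The zeros of $z^2-z+t$ are $z_\pm(t)=\tfrac12\bigl(1\pm\sqrt{1-4t}\bigr)$, and for $|t-1/4|\le r'$ they satisfy $|z_\pm(t)|\ge\tfrac12\bigl(1-2\sqrt{|t-1/4|}\bigr)\ge\tfrac12(1-2\sqrt{r'})>0$; so there is $\rho_0>0$ with $z^2-z+t\ne0$ on the polydisc $U=\{|z|<\rho_0\}\times\{|t-1/4|<r'\}$. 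Since $U$ is contractible, the nonvanishing holomorphic function $(z,t)\mapsto z^2-z+t$ admits a holomorphic square root $F$ on $U$; put $f_t:=F(\cdot,t)$, one of the two branches of $\sqrt{z^2-z+t}$ near $0$ (the sign ambiguity does not affect $|\hat f_n(t)|$). The Cauchy formula $\hat f_n(t)=\tfrac1{2\pi i}\oint_{|z|=\rho_0/2}F(z,t)z^{-n-1}\,dz$ shows that each $\hat f_n$ is holomorphic, in particular continuous, in $t$ near the circle $|t-1/4|=r$, and $|\hat f_0(t)|=|F(0,t)|=\sqrt{|t|}$.

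Now suppose the Lemma fails. Then for every $N$ there is $t_N$ with $|t_N-1/4|=r$ and $\max_{1\le n\le N}|\hat f_n(t_N)|\le M$; by compactness, a subsequence $t_{N_k}$ converges to some $t_0$ with $|t_0-1/4|=r$. For each fixed $n\ge1$, continuity of $\hat f_n$ and $|\hat f_n(t_{N_k})|\le M$ for all large $k$ give $|\hat f_n(t_0)|\le M$; and $|\hat f_0(t_0)|=\sqrt{|t_0|}\le\sqrt{1/4+r}<1$. Hence $|\hat f_n(t_0)|\le\max(M,1)$ for all $n\ge0$, so $\sum_n\hat f_n(t_0)z^n$ converges absolutely for $|z|<1$; that is, the Taylor series of $f_{t_0}$ at $0$ has radius of convergence at least $1$.

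It remains to contradict this by identifying that radius of convergence. Since $|t_0-1/4|=r>0$ we have $t_0\ne1/4$, so the zeros $z_\pm(t_0)$ of $z^2-z+t_0$ are distinct, hence simple; at a simple zero a holomorphic branch of the square root has a genuine branch point and cannot be continued single-valuedly past it, so the radius of convergence of $f_{t_0}$ at $0$ equals $\min\bigl(|z_+(t_0)|,|z_-(t_0)|\bigr)$. Because $z_+(t_0)z_-(t_0)=t_0$,
\[
\min\bigl(|z_+(t_0)|,|z_-(t_0)|\bigr)\le\sqrt{|z_+(t_0)|\,|z_-(t_0)|}=\sqrt{|t_0|}\le\sqrt{\tfrac14+r}<\sqrt{\tfrac12}<1,
\]
contradicting the previous paragraph. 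This proves the Lemma. The one delicate point is this last identification of the radius of convergence — namely that a simple zero of $z^2-z+t_0$ genuinely obstructs analytic continuation of the square root — together with having chosen, as above, a branch of $\sqrt{z^2-z+t}$ that varies continuously with $t$ near the circle, which is what makes the limiting step legitimate.
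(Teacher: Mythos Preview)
Your proof is correct, and it takes a genuinely different route from the paper's.

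The paper proceeds constructively: it factors $z^2-z+t=(z-z_1)(z-z_2)$, splits $f_t=g_t+h_t$ where $h_t$ carries the two individual square-root branches and $g_t$ is a smoother remainder, and then uses the explicit Taylor coefficients $c_n\sim(2\sqrt{\pi})^{-1}n^{-3/2}$ of $\sqrt{1-z}$ to obtain a lower bound $|\hat h_{n-1}(t)|+|\hat h_n(t)|\ge \eps' n^{-3/2}\rho(t)^{-n}$ uniformly on the circle, together with an upper bound $|\hat g_n(t)|\le K_2 n^{-2}\rho(t)^{-n}$ via an $H^1$ estimate. This yields not merely the existence of some large coefficient among the first $N$, but the stronger fact that the specific pair $\hat f_{N-1},\hat f_N$ is large, with an explicit exponential growth rate $\rho(t)^{-N}$. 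The paper actually uses this extra quantitative information in the remark following Example~\ref{ex-mult}, where exponential growth of $\min_{|\tau-1|=r}\|\sqrt{\tau-S_N}\|$ in $N$ is noted.

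Your argument, by contrast, is a soft compactness-plus-contradiction proof: continuity of $t\mapsto\hat f_n(t)$ (from the joint holomorphic branch on a polydisc) lets you pass to a limit $t_0$ with all $|\hat f_n(t_0)|$ bounded, forcing the radius of convergence at $0$ to be $\ge 1$; but the product relation $z_+z_-=t_0$ gives $\min(|z_+|,|z_-|)\le\sqrt{|t_0|}<1$, and the simple-zero argument shows this minimum is exactly the radius of convergence. This is shorter and more elementary, and suffices for the lemma as stated; what it gives up is the explicit identification of which coefficients blow up and at what rate.
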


\begin{proof}
Fix some radius $r\in (0, 1/4)$, and let $|t-1/4|=r$. The roots of
$z^2-z+t$ are $z_{1,2}=z_{1,2}(t)=1/2\pm \,\sqrt{1/4-t}$. Notice
that $|z_{1,2}|<1$.
 Set
\[
f_t(z) = \sqrt{z_1(t)-z} \;\sqrt{z_2(t)-z}.
\]
This coincides with the previous definition in a neighbourhood of $0$, but now
$f_t(z)$ turns to be holomorphic in the disc $|z|<\rho(t)$, where $\rho(t)=\min(|z_1|, |z_2|)$.
We have
\[
f_t(z)=g_t(z)+h_t(z),
\]
where
\begin{align}
g_t(z)&=\big(\sqrt{z_1-z}-\sqrt{z_1-z_2}\big)\,\big(\sqrt{z_2-z}-\sqrt{z_2-z_1}\big), \\
\label{h} h_t(z)&=\sqrt{z_2-z_1}\; \sqrt{z_1-z} +
\sqrt{z_1-z_2}\; \sqrt{z_2-z} - \sqrt{z_2-z_1}\; \sqrt{z_1-z_2} \, .
\end{align}
We denote by $\hat g_n(t), \hat h_n(t)$ the Taylor coefficients of
$g_t(z)$ and $h_t(z)$.

One has the formula
\begin{equation}
\label{tayl-sqrt} \sqrt{1-z}=1-\sum_{n=1}^\infty c_n z^n, \quad
|z|<1,
\end{equation}
where
\[
c_n=\frac 1 {2(n+\frac 12)(n-\frac 12)B(\frac12, n+1)} \sim \frac
1 {2\sqrt\pi\, n^{3/2}}\, , \quad n\to \infty
\]
%            SIGNS !
%  __________MATLAB__________
%
%1/(2*(n+.5)*(n-.5)*beta(.5,n+1))
%
($B$ is the Beta function).
By \eqref{h} and \eqref{tayl-sqrt}, we get
\[
\hat h_n(t) = - c_n( a(t) z_1(t)^{-n} + b(t) z_2(t)^{-n} ), \quad n\ge 1,
\]
with
\[
a(t)= \sqrt{z_2(t)-z_1(t)}\, \sqrt{z_1(t)}, \quad b(t)=
\sqrt{z_1(t)-z_2(t)}\, \sqrt{z_2(t)}\, .
\]
Since $|z_1(t)-z_2(t)|=2\,\sqrt r$, it is easy to see that there
is a constant $\eps>0$, independent of $t$, such that for any
$n\ge 1$,
\[
\qquad \qquad
\big|a(t) z_1(t)^{-n+1} + b(t) z_2(t)^{-n+1} \big| + \big|a(t)
z_1(t)^{-n} + b(t) z_2(t)^{-n} \big|
>\eps \rho(t)^{-n}\, .
\]
Therefore $|\hat h_{n-1}(t)|+|\hat h_n(t)|> \eps'n^{-3/2}\rho(t)^{-n}$,
where $\eps'=\eps'(r)>0$.

We assert that a similar lower estimate holds for the Taylor coefficients of $f_t$.
To show it, consider the two-point set
\[
E(t)=\big\{ \rho(t)\, \frac {z_1(t)}{|z_1(t)|}, \rho(t)\, \frac
{z_2(t)}{|z_2(t)|} \big\}\, .
\]
Then for $|z|<\rho(t)$, $|g_t''(z)|\le K \dist\big(z,
E(t)\big)^{-1/2}$, where $K$ does not depend on $t$ (here
$g_t''(z)=d^2g_t(z)/dz^2$). Hence $\|g_t''\|_{H^1(B_{\rho(t)}(0))}\le
K_1$ ($H^1$ stands for the Hardy space). This gives
\[
|\hat g_n(t)|\le \frac {K_2}{n^2}\, \rho(t)^{-n} .
\]
%
% \textbf{Explanation:} - erased - D. Ya
%
The constants $K_1$ and $K_2$ only depend on $r$. Fix any positive constant $\MM$.
Since $\hat f_n(t)=\hat g_n(t)+\hat h_n(t)$, there exists a large $N=N(r,\MM)$ such
that
\[
|\hat f_{N-1}(t)|+|\hat f_N(t)|
>
\bigg( \frac {\eps'}{N^{3/2}}- \frac {2 K_2}{(N-1)^2} \bigg)
\rho(t)^{-N}
>
\frac {\eps'}{2N^{3/2}} > 2 \MM
\]
for all $t$, $|t-1/4|=r$. This implies the statement of Lemma.
\end{proof}

\begin{ex}
\label{ex-mult}
Given any real $r$, $0<r< \frac 12$ and any (large) real number $\MM$, there exists a
nilpotent square matrix $S$, whose size depends on $r$ and $\MM$,
such that
$\|\sqrt{I-S}\|\le 3$, whereas
$\|\sqrt{\tau -S}\|\ge \MM$ for any $\tau$ on the circle $|\tau -1|=r$.
Here $\sqrt{\tau-S}$ is understood in the sense of the Riesz-Dunford calculus,
applied to the function $\sqrt{\tau -z}$, where the principal value of the square root
is meant.
\end{ex}

Indeed, consider the $N\times N$ nilpotent lower triangular Toeplitz matrix
\[
S_N=
{\tiny
\begin{pmatrix}
0          &     0     &     0       &    \cdots \;  &     0 &     0   \\
4          &     0     &     0       &   \cdots \;  &     0 &     0   \\
-4          &     4     &     0       &   \cdots \;  &     0 &     0   \\
0          &     -4     &     4       &    \cdots \;  &     0 &     0   \\
\vdots     &  \vdots   &  \ddots     &   \ddots  & \vdots &    \vdots        \\
0          &     0     & \cdots      &     -4               &     4 &    0
\end{pmatrix}
}\, ,
\]
(which has entries $4$ on the first diagonal under the main one, entries $-4$ on
the second diagonal and all other entries equal to $0$).
We assert that one can put $S=S_N$, where $N=N(r,\MM)$ is sufficiently large.
To see this, notice
first that $S_N=4J_N-4J_N^2$, where $J_N$ is the standard
$N\times N$ Jordan block with ones on the first diagonal under the main one.
It is standard that for any function $\phi$, analytic in a neighbourhood of zero,
$\phi(J_N)$ is well-defined and is a Toeplitz lower triangular matrix, whose
entries in the first column are $\hat \phi_0, \hat \phi_1, \dots, \hat \phi_{N-1}$.
Define $f_t(z)$ as in Lemma~\ref{taylor}.
It follows that
\[
\sqrt{\tau -S_N}=2 f_{\tau/4}(J_N).
\]
In particular, $\sqrt{I-S_N}=I-2J_N$. Therefore
$\|\sqrt{I-S_N}\|\le 3$. Take any $\MM>0$. By Lemma~\ref{taylor},
there is some $N$ such that for any $\tau $ on the circle $|\tau -1|=r$, the matrix
$\sqrt{\tau -S_N}$ has an entry, whose absolute value is greater than $\MM$.
This implies our assertion.

\medskip

Notice that in fact, the above argument proves that for a fixed $r\in (0,1/2)$,
\[
\min_{|\tau-1|=r}\|\sqrt{\tau -S_N}\|
\]
grows exponentially as a function of the size $N$.
The informal explanation of this example is that in the limit (as $N\to \infty$), the matrices
$S_N$ behave as the Toeplitz operator $T_\psi$
with the analytic symbol $\psi(z)=4z-4z^2$ on $H^2\big(B_1(0)\big)$. Then for $\tau=1$,
the square root $\sqrt{I-T_\psi}$ exists as a bounded operator (and
equals to $T_{1-2z}$), whereas a bounded operator square root
$\sqrt{\tau -T_\psi}$ does not exist if $\tau\ne 1$ is close to $1$.
We observe that the spectrum of the ``limit operator'' $T_\psi$ is no longer
one point, instead, it contains a neighbourhood of $1$.

This example also implies that even for a nilpotent matrix $S$,
the values of $\|\sqrt{\tau -S}\|$ can change very rapidly
for $\tau$ in a neighbourhood of $1$. (Notice that
for a fixed $S$, $\sqrt{\tau -S}$ is analytic in this neighbourhood.)
In particular, to the contrary to Corollary~\ref{lips3},
any estimate of the Lipschitz constants of the functions
$\tau\mapsto \|\sqrt{\tau-S}\|$,
$\tau\mapsto \|\sqrt{\tau -S}\|^{-1}$
should depend on the size of $S$.

\subsection{Final remarks on estimates of functions of operators and matrices}
Here we discuss some relations between known results.

Suppose we have an operator $T$ on a Hilbert space $\mathcal{H}$ (which can be
finite dimensional) and suppose that the function $\Psi_T$ is known. One can ask,
what can be said about the
norms $\|f(T)\|$, where $f$ is analytic on $\si(T)$.
This question was raised in the work \cite{GrTref}, which contains an example of two
matrices $T_1$ and $T_2$ with simple eigenvalues and identical pseudospectra
(that is, satisfying $\Psi_{T_1}(z)=\Psi_{T_2}(z)$ for all $z$)
and such that
$\|T_1^2\|\ne \|T_2^2\|$. The matrix norms here and in the definition of
$\Psi_{T_j}$ are induced by the Euclidean norm.
This question was further investigated a series of papers by Ransford
and his coauthors.
The paper \cite{ransf-rost} by Ransford and Rostand gives another example
of such type of matrices with simple eigenvalues.
Moreover,
the two matrices in this latter example have super-identical pseudospectra in the sense that
all singular numbers of $T_1-z$ coincide with those of $T_2-z$, for any
$z\in \BC$.

By a theorem in \cite{ransf-raou}, given a domain $\Om$ and a function
$f\ne \operatorname{const}$ in $\Hol(\Om)$, which
is not a M\"obius transformation, for any $N\ge 6$ and any $M>1$ one can find
$N\times N$ matrices $T_1$ and $T_2$ with identical pseudospectra such that
$\|f(T_1)\|\ge M \|f(T_2)\|$.
On the other hand, it is known (see \cite{forti-ransf}) that,
given matrices $T_1$ and $T_2$ of size $N\times N$ with super-identical pseudospectra,
one has
\[
N^{-1/2}\le \frac {\|f(T_1)\|}{\|f(T_2)\|} \le N^{1/2}
\]
for any function $f$ holomorphic on $\si(T_1)=\si(T_2)$.
It is not known whether there is an estimate independent of $N$.

There are also many other positive results on the estimation of
functions of operators and matrices.
For instance, the following assertion follows from
the main result of \cite{BadeaBeckCrouzeix}.
\begin{thmnonumber}[\cite{BadeaBeckCrouzeix}]
Let $T$ be a Hilbert space operator and let
$z_1, \dots, z_n$ be
points outside its spectrum. Then for any
bounded analytic function on the
(unbounded) domain
$\Om= \BC\sm \cup_j \clos B\big(z_j, \Psi_T(z_j)\big)$,
one has
$
\|f(T)\|\le K\sup_\Om |f|,
$
where $K = n+n(n-1)/\sqrt{3}$.
\end{thmnonumber}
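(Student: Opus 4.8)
The plan is to realize $\Om$ as an intersection of $n$ complete spectral sets for $T$ and then to quote the intersection theorem of \cite{BadeaBeckCrouzeix}, which asserts that whenever $E_1,\dots,E_n$ are complete spectral sets for a Hilbert space operator $T$, their intersection is a $K$-spectral set for $T$ with the precise constant $K=n+n(n-1)/\sqrt3$. First I would record the elementary inequality $\Psi_T(z_j)\le\dist(z_j,\si(T))$, which follows from $\|(T-z_j)^{-1}\|\ge\dist(z_j,\si(T))^{-1}$; thus the removed open disks $B\big(z_j,\Psi_T(z_j)\big)$ stay clear of $\si(T)$, so that $f(T)$ is well-defined through the holomorphic functional calculus. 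Writing $r_j=\Psi_T(z_j)$ and $E_j=\big\{z\in\C:|z-z_j|\ge r_j\big\}\cup\{\infty\}$ for the complement on the Riemann sphere of the open disk $B(z_j,r_j)$, one has $\Om\cup\{\infty\}=E_1\cap\dots\cap E_n$.

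The second step is to check that each $E_j$ is a complete spectral set for $T$. Consider $A_j=r_j(T-z_j)^{-1}$. By the very definition of $\Psi_T$ we have $\|A_j\|=r_j\,\|(T-z_j)^{-1}\|=1$, so $A_j$ is a contraction; and $\si(A_j)=\{r_j/(\la-z_j):\la\in\si(T)\}\subset\clos B(0,1)$ since $|\la-z_j|\ge r_j$ on $\si(T)$. By Sz.-Nagy's unitary dilation theorem (equivalently, the complete form of von Neumann's inequality), the closed unit disk is a complete spectral set for $A_j$. The M\"obius map $\mu_j(w)=z_j+r_j/w$ carries the closed unit disk conformally onto $E_j$, with $\mu_j(0)=\infty$, and $\mu_j(A_j)=z_j+r_jA_j^{-1}=T$. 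Composing the functional calculus of $A_j$ with $\mu_j$ then gives, for every $\C^{k\times k}$-valued function $g$ that is bounded and holomorphic on $E_j$, the identity $g(T)=(g\circ\mu_j)(A_j)$ together with $\|g(T)\|\le\sup_{|w|\le1}\|g(\mu_j(w))\|=\sup_{E_j}\|g\|$; this is exactly the assertion that $E_j$ is a complete spectral set for $T$. The only delicate point in this step is the bookkeeping at the point at infinity, where the singularity of $g\circ\mu_j$ is removable precisely because $g$ is bounded.

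Finally, applying the intersection theorem of \cite{BadeaBeckCrouzeix} to $E_1,\dots,E_n$ and to the function $g=f$ on $E_1\cap\dots\cap E_n=\Om\cup\{\infty\}$ yields $\|f(T)\|\le K\sup_\Om|f|$ with $K=n+n(n-1)/\sqrt3$, which is the desired inequality. The entire analytic content of the argument is packed into that intersection theorem, which we invoke as a black box; were one to prove everything from scratch, the main obstacle would sit exactly there — in passing from $n$ separate functional-calculus estimates to a single uniform estimate for a function that need not extend holomorphically past any one of the $E_j$ — while the two preliminary reductions above are entirely routine.
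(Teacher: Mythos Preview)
Your derivation is correct and is exactly the intended one: the paper does not give a proof of this statement but merely asserts that it ``follows from the main result of \cite{BadeaBeckCrouzeix}'', and your argument spells out precisely that implication --- each $E_j$ is a complete spectral set for $T$ because $r_j(T-z_j)^{-1}$ is a contraction, and then the intersection theorem of \cite{BadeaBeckCrouzeix} gives the constant $K=n+n(n-1)/\sqrt3$.
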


Notice that here $K$ does not depend on the dimension
of $\mathcal{H}$.

Many other results have this form. For instance, suppose
$T$ is a Hilbert space operator, $\si(T)\subset B_1(0)$ and
$\Psi_T(z)\ge r$ for any $z$ on the circle $|z|=1+r$. Then $T$ is
a $\rho$-contraction for $\rho=2+1/r$, which implies
the estimate $\|f(T)\|\le \rho \sup_{B_1(0)} |f|$, for any
function $f$ holomorphic in $B_1(0)$ such that $f(0)=0$ (see
\cite[Section I.11]{SzFBK2010}).
It is easy to describe the numerical range of $T$ in terms of the behavior of
the function $\Psi_T$, see \eqref{num_range}.
Therefore the variant of the von Neumann inequality given by
B. Delyon and F. Delyon in \cite{Delyon_B_F} can also be seen as a positive result
in this direction. We refer to \cite{DriEstYaku} for a generalization
of the result of \cite{Delyon_B_F} to certain non-convex sets associated with the operator.

As positive results on estimation of norms
 $\|f(T)\|$, one can mention the Kreiss matrix
theorem (see, for instance, \cite[Section 18]{Trefeth-book}) and
the results by Szehr and Zarouf (see \cite{Szehr2014,Szehr_Zar} and references therein).

One can also relate the estimates of functions of an operator
with the so-called weak resolvent
sets. By definition (see \cite{FongNordgRadjRos}), an analytic function
on $\BC\sm \si(T)$ is called \textit{a weak resolvent} of a
bounded operator $T$ on a Banach space $\mathcal X$ if it has the
form $z\mapsto G\big((T-z)^{-1}f\big)$ for some $f\in\mathcal{X}$
and $G\in\mathcal{X}^*$. The weak resolvent set $WR(T)$ of $T$ is
the set of all its weak resolvents. This interesting notion was
introduced in 1987 in a paper by Nordgren, Radjavi and Rosenthal
and further studied by Fong and the named three authors in
\cite{FongNordgRadjRos}.
Since it makes no difference, let us consider the Banach space setting.

Let $T_j\in \mathcal{B}(\mathcal{H}_j)$, $j=1,2$ be two Banach space operators.
Following \cite{FongNordgRadjRos}, we say that
$WR(T_1)\subset WR(T_2)$ if $\si(T_1)\subset\si(T_2)$ and each function in
$WR(T_1)$ is also in $WR(T_2)$. Let us cite the following result.

\begin{thmnonumber}[\cite{FongNordgRadjRos}, Theorem 2.8]
If $\si(T_1)$ has finitely many holes and $WR(T_1)\subset
WR(T_2)$, then there is a constant $k$ such that $\|\phi(T_2)\|\le
k \|\phi(T_1)\|$ for any function $\phi$, holomorphic on a
neighbourhood of $\si(T_1)$.
\end{thmnonumber}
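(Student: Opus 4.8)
The plan is to realise the holomorphic functional calculi of $T_1$ and $T_2$ as quotients of one and the same algebra of rational functions and to prove that the natural map between those quotients is bounded, its norm being the desired constant $k$. The basic tool is the weak Riesz--Dunford formula: for a bounded operator $T$ on a Banach space $\mathcal X$, for $\phi$ holomorphic on a neighbourhood $\Omega$ of $\sigma(T)$, and for $f\in\mathcal X$, $G\in\mathcal X^{*}$, one has $G(\phi(T)f)=\frac{1}{2\pi i}\oint_{\Gamma}\phi(z)\,G\big((T-z)^{-1}f\big)\,dz$ for any cycle $\Gamma\subset\Omega$ with winding number $1$ about each point of $\sigma(T)$ and $0$ about $\C\setminus\Omega$; thus $\phi(T)$ is completely determined by $\phi$ together with the weak-resolvent family $WR(T)$, and for a fixed $\Gamma$ it depends on $\phi$ linearly and norm-continuously in the topology of uniform convergence on $\Gamma$.

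The first step is to reduce to rational $\phi$. The inequality is only being asserted when $\phi(T_2)$ makes sense, i.e.\ when $\phi$ is holomorphic near $\sigma(T_2)$ as well; since the hypothesis $WR(T_1)\subset WR(T_2)$ entails $\sigma(T_1)\subseteq\sigma(T_2)$, we may take $\sigma(T_1)=\sigma(T_2)=:\sigma$ and $\phi$ holomorphic on a fixed neighbourhood of $\sigma$. Because $\sigma$ has only finitely many holes, one can pick a single cycle $\Gamma$ surrounding $\sigma$ whose interior $D$ is a finitely connected neighbourhood of $\sigma$ — one small loop inside each hole (of winding number $0$) together with an outer loop. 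By Runge's theorem every $\phi\in\Hol(D)$ is the uniform limit on $\Gamma$ of rational functions $r_n$ whose poles are confined to the finitely many holes of $\sigma$ and to $\infty$; by the weak Riesz--Dunford formula, $r_n(T_j)\to\phi(T_j)$ in operator norm for $j=1,2$. Hence it suffices to produce $k$, independent of the rational function $r$ (with poles off $\sigma$), with $\|r(T_2)\|\le k\,\|r(T_1)\|$.

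Let $\mathcal R$ be the algebra of rational functions with poles off $\sigma$ and put $\mathcal I_j=\{r\in\mathcal R:r(T_j)=0\}$. The heart of the matter is the inclusion $\mathcal I_1\subseteq\mathcal I_2$: if $r(T_1)=0$, then by the weak Riesz--Dunford formula $\oint_{\Gamma}r(z)\,\rho(z)\,dz=0$ for every $\rho\in WR(T_1)$, and the weak-resolvent hypothesis is then used to conclude that the same integral vanishes for every $\rho\in WR(T_2)$, i.e.\ $r(T_2)=0$. Granting this, the assignment $\Lambda\colon r(T_1)\mapsto r(T_2)$ is a well-defined linear map from $\{r(T_1):r\in\mathcal R\}$ onto $\{r(T_2):r\in\mathcal R\}$. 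Its boundedness follows from the uniform boundedness principle together with the quantitative companion of the same implication: if $\|r_n(T_1)\|\le 1$, then $\big|\oint_{\Gamma}r_n\,\rho\big|=|G(r_n(T_1)f)|\le\|f\|\,\|G\|$ for each $\rho=G\big((\,\cdot\,-T_1)^{-1}f\big)\in WR(T_1)$, and transferring this bound to $WR(T_2)$ gives $\sup_n|G(r_n(T_2)f)|<\infty$ for all $f\in\mathcal H_2$, $G\in\mathcal H_2^{*}$, whence $\sup_n\|r_n(T_2)\|=:k<\infty$ by two applications of uniform boundedness. Then $\|r(T_2)\|\le k\,\|r(T_1)\|$ for all $r\in\mathcal R$, and by the previous paragraph the same estimate holds for $\|\phi(T_2)\|$ and $\|\phi(T_1)\|$ whenever $\phi$ is holomorphic on a neighbourhood of $\sigma(T_1)$.

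The main obstacle is precisely the passage $\mathcal I_1\subseteq\mathcal I_2$ and its quantitative form — i.e.\ feeding the inclusion $WR(T_1)\subset WR(T_2)$ into the conclusion ``$\oint_{\Gamma}r\,\rho=0$ for all $\rho\in WR(T_2)$'' (and the corresponding uniform-boundedness statement along $\Gamma$). One has to argue that the weak resolvents of $T_2$ are controlled, on the contour, by those of $T_1$ in a way that is strong enough for the uniform boundedness principle to close the argument; this is the delicate point, and it is where the precise content of the weak-resolvent hypothesis is indispensable. By contrast, the finiteness of the holes of $\sigma(T_1)$ is used only in the reduction of the second paragraph: it is what makes a single contour $\Gamma$ and a Runge approximation by rational functions with controlled poles available, and it is essential there, since without such a bound one cannot expect the constant $k$ to be independent of $\phi$.
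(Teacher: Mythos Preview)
The paper does not prove this theorem; it is merely quoted from \cite{FongNordgRadjRos} in the subsection of final remarks, with no argument supplied. So there is no proof in the present paper to compare your attempt against.

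On the merits of your sketch: you correctly identify the main obstacle in your last paragraph, but you do not overcome it, and as written the argument does not go through. The hypothesis $WR(T_1)\subset WR(T_2)$ says that every weak resolvent of $T_1$ is also one of $T_2$; the containment points \emph{into} $WR(T_2)$, not out of it. Thus from ``$\oint_\Gamma r\rho=0$ for every $\rho\in WR(T_1)$'' you cannot infer the same for every $\rho\in WR(T_2)$, since the latter set is a priori larger --- the direction of the inclusion is exactly wrong for the step $\mathcal I_1\subseteq\mathcal I_2$ as you argue it. The same defect blocks the quantitative version: a bound $\big|\oint_\Gamma r_n\rho\big|\le\|f\|\,\|G\|$ coming from a representation $\rho=G\big((T_1-\cdot)^{-1}f\big)$ says nothing about the norms appearing in a representation of the \emph{same} function as a weak resolvent of $T_2$, so ``transferring this bound to $WR(T_2)$'' and then invoking uniform boundedness is not justified. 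What is missing is precisely the substantive structural work carried out in \cite{FongNordgRadjRos}; your outline flags the gap but does not fill it. A minor additional point: the reduction ``we may take $\sigma(T_1)=\sigma(T_2)$'' is asserted rather than proved --- by the definition recalled in the paper, the hypothesis only gives $\sigma(T_1)\subset\sigma(T_2)$, and the statement is made for $\phi$ holomorphic merely near $\sigma(T_1)$.
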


In particular, it follows that $\Psi_{T_1}\le k^{-1} \Psi_{T_2}$
on $\C\setminus \si(T_1)$. If $\si(T_1)=\si(T_2)$ and the weak resolvent sets
of $T_1$ and $T_2$ coincide, then one has a two-sided estimate
$\Psi_{T_1} \asymp \Psi_{T_2}$ on $\BC\sm \si(T_1)$.

One can observe that the statement from the above theorem is much
stronger than just the relation $\Psi_{T_1} \asymp \Psi_{T_2}$. In
fact, it is also proven in \cite{FongNordgRadjRos} that
whenever the sets $WR(T_1)$ $WR(T_2)$ coincide in a neighbourhood of
$\infty$, operators $T_1$ and $T_2$ generate isomorphic uniformly
closed algebras. If, moreover, both operators are strictly cyclic,
then they are similar.

\vskip.3cm

\textsl{Acknowledgements:}
The research by A. Pal has been
supported by
a
Marie Curie International Research Staff Exchange
Scheme Fellowship within the 7th European Union Framework
Programme (FP7/2007-2013) under grant agreement n${}^o$
612534,
project
MODULI - Indo European Collaboration on Moduli Spaces.
D. Yakubovich was supported by the project
MTM2015-66157-C2-1-P
of the Ministry of Economy and Competitiveness
of Spain
and
by the ICMAT Severo Ochoa project
SEV-2015-0554 of the Ministry of Economy and Competitiveness
of Spain and the European Regional
Development Fund (FEDER).

% https://www.icmat.es/projects/irsesmoduli/publications.xhtml

We express our gratitude to the referee for many helpful suggestions,
in particular, for improving the estimate in
Theorem~~\ref{z by psi}.
%\reftext{Theorem~\ref{zbypsi}}.
%\vskip-1cm

%%%%%%%%%%%%%%%%%%%%%%%%%%%%%%%%%%%%%%%%%%%%%%%%%%%%%%%%%%%%%%%%%%%%%%%%%%

\end{document}